\documentclass[a4paper,twoside,10pt]{article}
\usepackage[a4paper,left=2cm,right=2cm, top=3cm, bottom=3cm]{geometry}
\usepackage{cuted}

\usepackage{enumerate}
\usepackage{orcidlink}
\usepackage{amsthm}
\usepackage{mathrsfs}
\usepackage{mathtools}
\usepackage{accents}
\usepackage{graphicx}
\usepackage{amscd,amsmath,amssymb,mathrsfs,bbm,listings}
\usepackage{cancel}
\usepackage{tikz}
\usetikzlibrary{decorations.pathreplacing}
\usepackage{epstopdf}
\epstopdfsetup{
    suffix=,
}
\definecolor{darkblue}{rgb}{0.0, 0.0, 0.8}
\allowdisplaybreaks
\graphicspath{{Figs/}}
\usepackage{amsmath}
\usepackage{footmisc}
\usepackage{amsthm}
\usepackage{amssymb}
\usepackage{stmaryrd}
\SetSymbolFont{stmry}{bold}{U}{stmry}{m}{n}
\usepackage{float}
\usepackage{bigints}
\usepackage{cite}
\usepackage{color}
\usepackage[abs]{overpic}
\usepackage[font=footnotesize,labelfont=bf]{caption}
\usepackage{cases}
\usepackage{tikz}
\usepackage{algorithmic}
\usepackage{rotating}
\usepackage{blkarray}
\usetikzlibrary{matrix,calc,arrows}
\usepackage{soul,xcolor}
\usepackage{enumitem}  
\usepackage{verbatim}
\usepackage{subfig}
\usepackage{graphicx}
\usepackage{mwe}
\usepackage{algorithm}

\newtheorem{theorem}{Theorem}[section]
\newtheorem{lemma}[theorem]{Lemma}
\newtheorem{proposition}[theorem]{Proposition}
\newtheorem{assumption}{Assumption}[section]

\newtheorem{remark}[theorem]{Remark}

\newcommand{\eremk}{\hbox{}\hfill\rule{0.8ex}{0.8ex}}

\numberwithin{equation}{section}
\hypersetup{
    colorlinks=true,
    citecolor=red,
    linkcolor=darkblue,
    pdfborder={0 0 0}
}

\def\Sp{\mathcal{S}_p}
\def\Mp{\boldsymbol{\mathcal{M}}_p}
\def\Th{\mathcal{T}_h}
\def\dt{\,\mathrm{d}t}
\def\dx{\,\mathrm{d}\bx}
\def\dS{\mathrm{d}S}

\def\Fh{\mathcal{F}_h}
\def\Fho{\mathcal{F}_h^\mathcal{I}}
\def\FhN{\mathcal{F}_h^{\mathcal{N}}}

\def\ba{\boldsymbol{a}}
\def\bat{\boldsymbol{\widetilde{a}}}
\def\bm{\boldsymbol{m}}
\def\bc{\boldsymbol{c}}

\def\calU{\boldsymbol{\mathcal{U}}}
\def\calA{\bunderline{\boldsymbol{\mathcal{A}}}}
\def\calN{\bunderline{\boldsymbol{\mathcal{N}}}}
\def\calE{\bunderline{\boldsymbol{\mathcal{E}}}}
\def\calAt{\bunderline{\boldsymbol{\widetilde{\mathcal{A}}}}}
\def\bF{\boldsymbol{\mathcal{F}}}
\def\bM{\boldsymbol{M}}
\def\Nh{\calN_h}
\def\tNh{\widetilde{\calN}_h}
\def\Eh{\calE_h}


\def\vQh{\mathbf{Q}_h}
\def\vSigmah{{\boldsymbol{\Sigma}}_h}
\def\vWh{\mathbf{W}_h}
\def\vVh{\mathbf{V}_h}
\def\vZh{\boldsymbol{Z}_h}

\def\brho{\boldsymbol{\rho}}
\def\bdsigma{\bunderline{\boldsymbol{\sigma}}}
\def\bdsigmah{\bdsigma_h} 
\def\bdq{\bunderline{\boldsymbol{q}}} 
\def\bdqh{\bdq_h}
\def\bdz{\bunderline{\boldsymbol{\zeta}}}
\def\bdvarphi{\bunderline{\boldsymbol{\varphi}}}
\def\bdzh{\bdz_h}
\def\bw{\boldsymbol{w}}

\def\bwh{\boldsymbol{w}_h}
\def\bvh{\boldsymbol{v}_h}
\def\bu{\boldsymbol{u}}
\def\bdvarphih{\bunderline{\boldsymbol{\varphi}}_h}
\def\blambdah{\boldsymbol{\lambda}_h}
\def\blambda{\boldsymbol{\lambda}}
\def\bdphih{\boldsymbol{\phi}_h} 

\def\bdthetah{\bunderline{\boldsymbol{\theta}}_h}
\def\bdpsih{\bunderline{\boldsymbol{\psi}}_h}

\def\lambdah{\lambda_h} 
\def\whi{w_{h,i}}
\def\lambdahi{\lambda_{h,i}}
\def\dpsih{\bunderline{\psi}_h}
\def\dzetahi{\bunderline{\zeta}_{h,i}}
\def\dpsihi{\bunderline{\psi}_{h,i}}
\def\dqhi{\bunderline{q}_{h,i}}
\def\dthetahi{\bunderline{\theta}_{h,i}}


\def\wFlux{\widehat{\bw}}
\newlength{\dhatheight}
\def\qFlux{
    \settoheight{\dhatheight}{\ensuremath{\widehat{q_h}}}
    \addtolength{\dhatheight}{-0.40ex}
    \widehat{\vphantom{\rule{1pt}{\dhatheight}}
    \smash{\widehat{\bdq}}}
}

\def\IR{\mathbb{R}}
\def\IN{\mathbb{N}}
\def\QT{Q_T}
\def\bx{\boldsymbol{x}}
\def\bn{\boldsymbol{n}}
\def\vn{\bunderline{n}}
\def\vnOmega{\vn_\Omega}
\def\vnK{\vn_K}
\def\dpt{\partial_t}
\def\bz{\mathbf{z}}
\def\by{\mathbf{y}}
\def\calD{\mathcal{D}}
\newcommand{\vf}{\boldsymbol{f}}
\def\Linfty{L^{\infty}}
\def\calH{\mathcal{H}}
\def\calB{\mathcal{B}}
\def\calR{\mathcal{R}}
\newcommand{\frob}{\,\mathbin{:}\,}

\newcommand{\Norm}[1]{\| #1 \|}
\newcommand{\bigNorm}[1]{\big\| #1 \big\|}
\newcommand{\mvl}[1]{\{ \!\!\{#1\}\!\!\}}  
\newcommand{\jump}[1]{\llbracket #1\rrbracket}
\newcommand{\jumpx}[1]{\llbracket #1\rrbracket_{\sf{n}_x}}

\newcommand{\bigjump}[1]{\big\llbracket #1\big\rrbracket}
\newcommand{\bunderline}[1]{\underaccent{\bar}{#1}}
\newcommand{\Pp}[2]{\mathbb{P}^{#1}(#2)}
\newcommand{\abs}[1]{\left|#1\right|}
\newcommand{\EFC}[3]{\mathcal{C}^{#1}\left(#2; #3\right)}

\newcommand{\DotProd}[2]{\left\langle #1, #2 \right\rangle}

\definecolor{DarkGreen}{rgb}{0.0, 0.5, 0.0}
\definecolor{DarkYellow}{rgb}{1.0, 0.75, 0.0}
\definecolor{DarkBlue}{rgb}{0.0, 0.5, 1.0}

\def\tGamma{\widetilde{\Gamma}}
\def\ox{\overline{x}}
\def\oy{\overline{y}}
\def\tOmega{\widetilde{\Omega}}
\def\tTh{\widetilde{\mathcal{T}}_h}
\def\tK{\widetilde{K}}
\def\pxy{\partial_{xy}}

\def\tFho{\widetilde{\mathcal{F}}_h^{\mathcal{I}}}
\def\tFhN{\widetilde{\mathcal{F}}_h^{\mathcal{N}}}

\newcommand{\sg}[1]{\textcolor{orange}{#1}}

\title{Structure-preserving Local Discontinuous Galerkin method for nonlinear cross-diffusion systems}

\author{Sergio G\'omez\thanks{Department of Mathematics and Applications, University of Milano-Bicocca, 20125 Milan, Italy \newline (\href{mailto:sergio.gomezmacias@unimib.it}{sergio.gomezmacias@unimib.it})} \thanks{IMATI-CNR ``E. Magenes", Via Ferrata 5, 27100 Pavia, Italy} \orcidlink{0000-0001-9156-5135}
\and 
Ansgar J\"ungel\thanks{Institute of Analysis and Scientific Computing, TU Wien, 1040 Wien, Austria (\href{mailto:juengel@tuwien.ac.at}{juengel@tuwien.ac.at})} \orcidlink{0000-0003-0633-8929}
\and 
Ilaria Perugia\thanks{Faculty of Mathematics, University of Vienna, 1090 Wien, Austria (\href{mailto:ilaria.perugia@univie.ac.at}{ilaria.perugia@univie.ac.at})} \orcidlink{0000-0003-1368-2883}
}

\date{}

\begin{document}
\maketitle

\begin{abstract}
\noindent We present and analyze a structure-preserving method for the approximation of solutions
to nonlinear cross-diffusion systems, which combines a Local Discontinuous Galerkin spatial discretization with the backward Euler time-stepping scheme.
The proposed method makes use of the underlying entropy structure of the system, expressing
the main unknown in terms of the entropy variable by means of a nonlinear transformation. Such a transformation allows for 
imposing the physical  positivity or boundedness constraints on the approximate solution in a strong sense. A key advantage of our scheme is that nonlinearities do not appear explicitly within differential operators or interface terms in the scheme,
which significantly improves its efficiency and eases its implementation.
We prove the existence of discrete solutions and their asymptotic convergence to a weak solution to the continuous problem.
Numerical results for some one- and two-dimensional problems illustrate the accuracy and entropy stability of the proposed method.
\end{abstract}

\paragraph{Keywords.} Structure-preserving method, entropy stability, nonlinear cross-diffusion systems, Local Discontinuous Galerkin method.

\paragraph{Mathematics Subject Classification.} 65M60, 65M12, 35K51, 35K55, 35Q92.

\section{Introduction}
We consider the following nonlinear reaction--diffusion system on a space--time cylinder~$\QT = \Omega \times (0, T]$, where~$\Omega \subset \IR^d$ ($d\in \{1,2,3\}$) is a bounded, polytopic domain with Lipschitz boundary~$\partial \Omega$, and~$T>0$:
\begin{subequations}
\label{EQN::MODEL-PROBLEM}
\begin{align} 
\label{EQN::MODEL-PROBLEM-1}
\dpt \brho - \nabla \circ \left(A(\brho) \nabla \brho \right)  & = \vf(\brho)  \quad \text{in } \QT,\\
\label{EQN::MODEL-PROBLEM-2}
(A(\brho) \nabla \brho) \,\vnOmega  
& = \mathbf{0}  \qquad\, \text{ on } \partial \Omega \times (0, T),\\
\label{EQN::MODEL-PROBLEM-3}
\brho & = \brho_0\qquad\, \text{on } \Omega \times \{0\}.
\end{align}
\end{subequations}
Here, the unknown is~$\brho := (\rho_1, \ldots, \rho_N)^{{\sf T}}: \QT \rightarrow \IR^{N}$ for some number of species~$N \in \IN$, $A : \IR^N \rightarrow \IR^{N \times N}$ is the diffusion matrix, 
$\vf: \IR^N \rightarrow \IR^N$ describes the nonlinear interaction between the~$N$ species, and~$\brho_0 \in \Linfty(\Omega)^N$ is a given initial datum. We denote by~$\nabla (\cdot)$ the~$\IR^{N \times d}$ matrix, whose rows contain the componentwise spatial gradients, by~$\nabla\circ(\cdot)$ the row-wise spatial divergence operator, and by~$\vnOmega$ the~$d$-dimensional vector of the spatial components of the unit normal vector at~$\partial\Omega\times (0, T)$ pointing outside~$\Omega\times (0, T)$. Some examples of problems of the form~\eqref{EQN::MODEL-PROBLEM} are given in Appendix \ref{app}; also see \cite[\S4.1]{Jungel_2016}.

The main challenges in the numerical approximation of the solution to nonlinear cross-diffusion systems are twofold: \emph{i)} the diffusion matrix~$A(\cdot)$ may not be symmetric nor positive definite, and \emph{ii)} a maximum principle may not be available. 
These issues prevent the use of standard techniques for the analysis of such systems, and make it difficult to guarantee that even weak solutions to the continuous problem respect the positivity or boundedness constraints of the physical unknowns. 
The boundedness-by-entropy framework in~\cite{Jungel_2015}, which we describe below, circumvents these issues by exploiting the underlying entropy structure of the system.

We focus on discontinuous Galerkin (DG) methods, which are characterized by the use of discrete broken spaces without any enforced conformity. Among many other advantages, DG methods offer great versatility for the treatment of nonlinearities. 
In particular, the Local Discontinuous Galerkin (LDG) method, originally introduced in~\cite{Cockburn_Shu_1998} for nonlinear convection\sg{--}diffusion systems, does not require nonlinearities to appear within differential operators or interface terms, leading to nonlinear operators that can be evaluated naturally in parallel.
Such a property is the result of appropriately rewriting the original problem in terms of auxiliary variables, and making use of~$L^2$-orthogonal projections in the discrete space of the nonlinear terms (see, e.g., \cite{Cockburn_Shu_1998,Dawson_Aizinger_Cockburn_2000}).

In order to obtain physically consistent discrete solutions, it is of utmost importance to design numerical methods that are not only accurate and efficient, but also reproduce, at the discrete level, the geometric and physical properties of the phenomenon being modeled.
Such numerical methods are called~\emph{structure preserving}.
One of the most difficult properties 
to reproduce at the discrete level is 
the physically expected positivity or boundedness of the continuous solution in finite element discretizations, especially for high-order approximations.
Although this is a well-known issue (see, e.g., the recent review in~\cite{Barrenechea_John_Petr_2024} on finite element methods (FEM) respecting the discrete maximum principle for convection\sg{--}diffusion equations),
only in last years has major progress been made in the literature. 
We briefly mention some recent works on this subject that do not rely on slope limiters or postprocessing techniques. 
In~\cite{Barrenechea_Georgoulis_Pryer_Vesser:2023}, the authors proposed a nodally bound-preseving FEM, whose discrete solution belongs to the convex set of piecewise polynomials satisfying the physical bound constraints on the mesh nodes. 
While this suffices to ensure strong (pointwise) positivity of the discrete solution for linear approximations, it does not provide any control on the values of the discrete solution away from the mesh nodes for higher-order approximations.
Motivated by the underlying entropy structure of the concerned PDEs, nonlinear transformations in terms of the entropy variable have been used to enforce positivity on the approximate solution of interior-penalty DG~\cite{Bonizzoni_Braukhoff_Jungel_Perugia:2020,Corti_Bonizzoni_Antoniett_2023}, conforming FEM~\cite{Braukhoff_Perugia_Stocker_2022},  hybrid high-order (HHO) \cite{Lemaire_Moatti_2024}, and Proximal Galerkin \cite{Keith_Surowiec:2024,Dokken_Farrel_Keith_Papadopoulos_Suroweic:2025} discretizations.

In this work, we propose an LDG method for the numerical approximation of the nonlinear cross-diffusion system~\eqref{EQN::MODEL-PROBLEM}, which is based on the framework of~\cite{Jungel_2015}, and possesses the following desirable properties:
\begin{itemize}[topsep = 0.3em, parsep = 0.2em, itemsep = 0.2em]
\item it allows for arbitrary degrees of approximation in space;
\item it preserves the boundedness of the physical unknowns without requiring any postprocessing or slope limiter; 
\item nonlinearities do not appear explicitly within differential operators or interface terms, which endows the method with a natural parallelizable structure and high efficiency;
\item it respects a discrete version of the entropy stability estimate of the continuous problem.
\end{itemize}
Although numerical methods for nonlinear cross-diffusion systems with some of these properties can be found in the literature, to the best of our knowledge, the proposed method is the first one satisfying all of them. 
For instance, finite volume methods for cross-diffusion systems have been proposed in~\cite{Andreianov_etal_2011,Cances_Gaudeul_2020,Cances_EM_2024,Jungel_Zurek_2021}, but at most second-order convergence rates in space are numerically obtained, whereas the entropy stable high-order DG method introduced in~\cite{Sun_Carrillo_Shu_2019} guarantees only weak positivity on Cartesian meshes by means of scaling limiters.

The proposed approach has been further developed for the Fisher-Kolmogorov equation in~\cite{Antonietti_Corti_Gomez_Perugia:2026} and for a class of conformational conversion systems in~\cite{Antonietti_Corti_Gomez_Perugia:2025}. In the latter, the ``degeneracy" arising in the entropy estimate for problems where the physical solutions are positive but not necessarily bounded is addressed.

\paragraph{The boundedness-by-entropy framework.} 
Henceforth, we make the following assumptions:

\begin{enumerate}[label=(H\arabic*), ref=(H\arabic*)]
    \item \label{H1} $A \in \EFC{0}{\overline\calD}{\IR^{N \times N}}$ and~$\vf \in \EFC{0}{\overline{\calD}}{\IR^N}$, for a bounded domain~$\calD \subset (0, \infty)^N$.
    \item \label{H2} There exists a convex function~$s \in \EFC{2}{\calD}{(0, \infty)} \cap \EFC{0}{\overline{\calD}}{(0, \infty)}$, with~$s' : \calD \rightarrow \IR^N$ invertible and inverse~$\bu := (s')^{-1} \in \EFC{1}{\IR^N}{\calD}$ such that the following three conditions are satisfied:
    \begin{enumerate}[label = (H2\alph{enumii}), ref = (H2\alph{enumii})]
    \item \label{H2a} There exists a constant~$\gamma > 0$ such that
    \begin{equation*}
        \label{EQN::COERCIVITY-A}
        \bz \cdot
        \left(s''(\brho) A(\brho) \bz\right) \geq \gamma \abs{\bz}^2 \qquad \forall \bz \in \IR^N, \ \brho \in \calD.
    \end{equation*}
    \item \label{H2b} There exists a constant~$C_f \geq 0$ such that
    \begin{equation*}
    \label{EQN::CONTINUITY-f}
    \vf(\brho) \cdot s'(\brho) \leq C_f \quad \forall \brho\in \calD.
    \end{equation*}
    \item \label{H2c} The initial datum $\brho_0$ is integrable and satisfies $\brho_0(\bx)\in \calD$ for a.e.~$\bx\in\Omega$. 
    \end{enumerate}
\end{enumerate}
Observe that Assumption~\ref{H2c} implies that 
\[
\int_\Omega s(\brho_0) \dx < \infty,
\]
since $\Omega$ is bounded, $s$ is bounded on $\overline{\calD}$, and $\brho_0$ takes values in~$\calD$. 

Let us comment on these assumptions. The boundedness of the domain $\calD$ guarantees the boundedness of the solutions; see below. Examples of models satisfying rigorously this property are given in Appendix \ref{app} and \cite[\S4.1]{Jungel_2016}.
The function~$s$ can be interpreted as the entropy or free energy of the underlying physical problem. Assumption~\ref{H2a} requires that the product~$s''(\brho) A(\brho)$ is positive definite. 
This condition can be generalized to include degenerate or singular continuous problems \cite[Ch.~4]{Jungel_2015}. We present some examples of cross-diffusion systems that satisfy Assumption~\ref{H2a} in Appendix~\ref{app}. Assumption~\ref{H2b} can be generalized to $\vf(\brho) \cdot s'(\brho) \leq C_f(1+s(\brho))$ for all $\brho\in\calD$, which yields an additional constant depending on time in inequality~\eqref{EQN::CONTINUOUS-ENTROPY-STABILITY} below. The condition that the initial datum lies pointwise in $\calD$ means that vacuum is initially excluded. This condition can be generalized to include vacuum, i.e.,\ $\brho_0\in\overline{\calD}$; see Remark \ref{rem:firststep}.

The main idea of the boundedness-by-entropy framework in~\cite{Jungel_2015} consists in  introducing the entropy variable~$\bw := s'(\brho)$ and then use the invertibility of~$s'(\cdot)$ in Assumption~\ref{H2} to write the original unknown as~$\brho = (s')^{-1}(\bw) = \bu(\bw)$. 
In this way, the boundedness of~$\calD$ in Assumption~\ref{H1} implies the pointwise boundedness of~$\bu(\bw)$, without requiring a maximum principle. 
Due to the regularity of the entropy density function~$s(\cdot)$ in Assumption~\ref{H2}, the following chain rule is valid:
\begin{equation}\label{eq:chainrule}
\nabla \bw = \nabla\left(s'(\brho)\right)
=s''(\brho) \nabla\brho.
\end{equation}
Taking~$\bw$ as the test function of the weak formulation of~\eqref{EQN::MODEL-PROBLEM} and using the chain rule \eqref{eq:chainrule}, we find that, for any~$0 < \tau \le T$,
\begin{align}
\label{EQN:FIRST-ENTROPY-FORM}
  \int_\Omega s(\brho(\bx, \tau)) \dx
  + \int_0^\tau\int_\Omega \nabla\brho \frob \big( s''(\brho)A(\brho)
  \nabla\brho  \big) \dx \dt = \int_{\Omega} s(\brho_0) \dx 
  + \int_0^\tau\int_\Omega \vf(\brho) \cdot s'(\brho)\dx \dt,
\end{align}
where~$B \frob C = \mathrm{tr}(C^T B)$ is the Frobenius inner product for matrices.

Then, Assumptions~\ref{H2a}--\ref{H2c} imply the entropy stability estimate
\begin{equation}
\label{EQN::CONTINUOUS-ENTROPY-STABILITY}
    \int_{\Omega} s(\brho(\bx, \tau)) \dx + \gamma \int_0^{\tau} \Norm{\nabla \brho}_{[L^2(\Omega)^d]^N}^2 \dt \leq \int_{\Omega} s(\brho_0) \dx + C_f \tau |\Omega| \qquad \text{ for all } 0 < \tau \leq T.
\end{equation}
The formal gradient-flow structure motivates another formulation of the entropy production integral:
\begin{align}
\label{EQN:SECOND-ENTROPY-FORM}
  \int_\Omega s(\brho(\bx, \tau)) \dx
  + \int_0^\tau\int_\Omega \nabla\bw \frob \big( A(\brho)s''(\brho)^{-1}
  \nabla\bw \big) \dx \dt = \int_{\Omega} s(\brho_0) \dx 
  + \int_0^\tau\int_\Omega \vf(\brho) \cdot s'(\brho)\dx \dt.
\end{align}
It turns out that in applications, the first formulation \eqref{EQN:FIRST-ENTROPY-FORM} is more convenient. In fact, we cannot formulate a discrete chain rule in the second formulation~\eqref{EQN:SECOND-ENTROPY-FORM} that avoids nonlinear terms under the differential operator like in our approach.

The change to entropy variables is well known in the theory of hyperbolic conservation laws~\cite{Mock_1980} and in the existence and numerical analysis of Nernst--Planck-type equations \cite{Gajewski_1985,Metti_Xu_Liu_2016}. In nonequilibrium thermodynamics, the entropy variables are known as \emph{chemical potentials}. The novelty of our approach is that Assumption~\ref{H2} automatically yields pointwise lower and upper bounds for the solution $\brho$, thus endowing the numerical scheme with inherent stability.
\paragraph{Outline of the paper.}
In Section~\ref{SECT::DG-METHOD}, we first rewrite the nonlinear cross-diffusion system in~\eqref{EQN::MODEL-PROBLEM} in terms of some suitably chosen auxiliary variables. 
In Section~\ref{SECT:SEMIDISCR}, we present an LDG semidiscrete-in-space formulation of the rewritten system and prove its entropy stability. 
In Section~\ref{SECT::FULLY-DISCRETE}, such a semidiscrete LDG formulation is combined with the backward Euler time discretization and a regularizing term to get a fully discrete scheme. Section~\ref{SECT::WELL-POSEDNESS} is devoted to the proof of the existence of discrete solutions.
In Section~\ref{SEC::ASS_CONV}, we introduce the assumptions on the regularizing term and the discrete spaces that are used to prove the convergence to semidiscrete-in-time solutions in Section~\ref{SEC::hCONVERGENCE}, and to weak solutions to the continuous problem in Section~\ref{SECT::CONVERGENCE-epsilon-tau}.
The validity of such assumptions for different cases is discussed in Section~\ref{SEC::VALIDITY}. 
Some numerical experiments in one and two dimensions are presented in Section~\ref{SECT::NUMERICAL-EXP} to assess the accuracy and entropy stability of the scheme.
We finish with some concluding remarks in Section~\ref{SECT::CONCLUSIONS}.
\section{Definition of the method \label{SECT::DG-METHOD}}
We use the following notation for functions with~$N$ scalar-valued components
and with~$N$ $d$-vector-valued components, respectively:
\begin{equation*}
\boldsymbol{\mu} = (\mu_1, \ldots, \mu_N)^{{\sf T}}, \qquad \bunderline{\boldsymbol{\mu}} = (\bunderline{\mu}_1, \ldots, \bunderline{\mu}_N)^{{\sf T}}.
\end{equation*}

For the discretization in space, we introduce a DG approximation of problem~\eqref{EQN::MODEL-PROBLEM}, where nonlinearities do not appear within differential operators or interface terms, and a discrete version of the chain rule in~\eqref{eq:chainrule}
is satisfied. 
To this aim, we
introduce the auxiliary variables $\bw$, $\bdz$, $\bdsigma$, and $\bdq$ defined by
\begin{subequations}
\label{eq:variables}
\begin{align}
\label{eq:variables-1}
    \brho & := \bu(\bw), \\
\label{eq:variables-2}
    \bdz & := -\nabla \bw, \\
\label{eq:variables-3}
    A(\brho)^{{\sf T}}s''(\brho) \bdsigma & := - A(\brho)^{{\sf T}} s''(\brho) \nabla \brho = A(\brho)^{{\sf T}} \bdz, \quad \\
\label{eq:variables-4}
    \bdq & := A(\brho) \bdsigma,
\end{align}
\end{subequations}
and rewrite problem~\eqref{EQN::MODEL-PROBLEM} as
\begin{equation*}
\label{EQN::REWRITTEN}
\begin{cases}
    \dpt \brho + \nabla \circ \bdq = \vf(\brho) & \text{in } \QT,\\
    \bdq \, \vnOmega 
    = \mathbf{0} & \text{on } \partial \Omega \times (0, T), \\
    \brho = \brho_0 & \text{on } \Omega \times \{0\}.
\end{cases}
\end{equation*}

As $A(\brho)^{{\sf T}} s''(\brho)$ is positive definite by assumption~\ref{H2a},
on the continuous level, definition~\eqref{eq:variables-3} is equivalent to $\bdsigma = - \nabla \brho$.
Moreover, from~\eqref{eq:variables-1} and~\eqref{eq:variables-2}, we have that $\bdz=-\nabla\left(s'(\brho)\right)$. Therefore,
definition~\eqref{eq:variables-3} is a reformulation of the chain rule~\eqref{eq:chainrule} in terms of the auxiliary variables, which will guarantee that a discrete version of~\eqref{eq:chainrule} suitable for the analysis of the method is satisfied.

\subsection{Semi-discretization in space \label{SECT:SEMIDISCR}}

Let~$\{\Th\}_{h > 0}$ be a family of conforming simplicial meshes of the spatial domain~$\Omega$ with maximum element diameter (mesh size) $h$. If~$d = 2, 3$, we assume that the family~$\{\Th\}_{h>0}$ satisfies the shape-regularity condition, i.e., there exists a constant~$\Upsilon > 0$ independent of~$h$ such that, for all~$K \in \Th$, 
\begin{equation}\label{EQ::SHAPEREG}
\Upsilon h_K \le \varrho_K,
\end{equation}
where~$h_K$ denotes the diameter of~$K$ and~$\varrho_K$ is the radius of the inball
of~$K$.
We denote the set of all the mesh facets in~$\Th$ by~$\Fh = \Fho \cup \FhN$, where~$\Fho$ and~$\FhN$ are the sets of internal and (Neumann) boundary facets, respectively. In the following, we will use the short-hand notation for integrals on~$\Fho$ and~$\FhN$: for~$\star \in \{\mathcal{I}, \mathcal{N}\}$,
\begin{equation*}
\int_{\Fh^{\star}} \varphi \dS := \sum_{F \in \Fh^{\star}} \int_F \varphi \dS.
\end{equation*}

We define the following piecewise polynomial spaces:
\begin{alignat*}{3}
\Sp(\Th) & := \prod_{K \in \Th} \Pp{p}{K}, & \qquad \Mp(\Th) & := \prod_{K \in \Th} \Pp{p}{K}^d,\\
\Sp(\Th)^N & := \prod_{K \in \Th} \Pp{p}{K}^N, & \qquad \Mp(\Th)^N & := \prod_{K \in \Th} \Pp{p}{K}^{N \times d},
\end{alignat*}
where~$\Pp{p}{K}$ denotes the space of scalar-valued polynomials of degree at most~$p$ on the spatial domain~$K$. We further denote by~$(\partial K)^\circ$ the union of the facets of~$K$ that belong to~$\Fho$ and define the piecewise constant function~$\mathsf{h} \in L^{\infty}(\Fho)$ as
\begin{equation}
\label{EQN::DEF-h}
\mathsf{h}(\bx) := \eta^{-1}\min\{h_{K_1}, h_{K_2}\} \quad \text{ if }\bx \in F, \text{ and~$F\in \Fho$ is shared by~$K_1, K_2\in \Th$},
\end{equation}
for some constant~$\eta>0$ independent of the mesh size.

For any element~$K\in\Th$,  let~$\vn_K$ be the unit normal~$d$-dimensional vector to~$\partial K$ pointing outside~$K$. Moreover, for each interior facet~$F \in \Fho$, we set~$\vn_F$ as one of the two unit $d$-dimensional vectors orthogonal to~$F$.
For any piecewise smooth, scalar-valued function~$\mu$ and any~$\alpha_F \in [0, 1]$, we define jumps and weighted averages, respectively, on each facet~$F \in \Fho$, where~$F = \partial K_1 \cap \partial K_2$ for some~$K_1, K_2 \in \Th$ and with~$\vn_F$ pointing outward~$K_1$, by
\begin{align*}
\jump{{\mu}}_{\sf N} := ({{\mu}}_{|_{K_1}} -  
{{\mu}}_{|_{K_2}}) \vn_F, \quad 
\mvl{\mu}_{\alpha_F}  :=  (1 - \alpha_F) \mu_{|_{K_1}} + \alpha_F \mu_{|_{K_2}}. 
\end{align*}
The subscript~${\sf N}$ in the jumps~$\jump{\cdot}_{\sf N}$ emphasizes that the definition contains the normal to the facet.
For piecewise smooth functions~$\boldsymbol{\mu}$ with~$N$ scalar-valued components,
$\jump{\boldsymbol{\mu}}_{\sf N}$ and~$\mvl{\boldsymbol{\mu}}_{\alpha_F}$ are defined componentwise. Similarly, for piecewise smooth functions $\bunderline{\boldsymbol{\mu}}$ with $N$ $d$-vector-valued components,~$\mvl{\bunderline{\boldsymbol{\mu}}}_{\alpha_F}$ is defined componentwise. 

We emphasize that, for scalar-valued functions~$\mu$, the jumps~$\jump{{\mu}}_{\sf N}$ are vector-valued functions with~$d$ components, while for functions~$\boldsymbol{\mu}$ with~$N$ scalar-valued components, $\jump{\boldsymbol{\mu}}_{\sf N}$ are~$N\times d$ tensors. 

We propose the following structure-preserving LDG-like semidiscrete formulation: for any fixed~$t \in (0, T]$, find~$(\bwh(\cdot, t), \bdzh(\cdot, t), \bdsigmah(\cdot, t), \bdqh(\cdot, t)) \in \Sp(\Th)^N\times \Mp(\Th)^N\times \Mp(\Th)^N\times \Mp(\Th)^N$ 
such that, on each element~$K \in \Th$, 
\begin{subequations}
\label{EQN::SEMI-DISCRETE}
\begin{align}
\label{EQN::SEMI-DISCRETE-1}
\int_K \bdzh 
\frob \bdpsih \dx & = -\int_{\partial K} \wFlux_h \cdot \left(\bdpsih \, \vnK\right) \dS  + \int_K \bwh \cdot (\nabla \circ \bdpsih) \dx, \\
\label{EQN::SEMI-DISCRETE-2}
\int_K A(\bu(\bwh))^{{\sf T}} s''(\bu(\bwh)) \bdsigmah 
\frob \bdvarphih \dx & = \int_K A(\bu(\bwh))^{{\sf T}} \bdzh 
\frob \bdvarphih \dx,\\
\label{EQN::SEMI-DISCRETE-3}
\int_K \bdqh 
\frob \bdthetah  \dx & = \int_K A\left(\bu(\bw_h) \right) \bdsigmah 
\frob \bdthetah\dx, \\
\label{EQN::SEMI-DISCRETE-4}
\int_K \dpt (\bu(\bwh)) \cdot \blambdah \dx + \int_{\partial K} \left(\qFlux_h \, \vnK\right)\cdot & \blambdah \dS -  \int_K  \bdqh 
\frob \nabla \blambdah \dx   = \int_K \vf\left(\bu(\bw_h)\right) \cdot \blambdah \dx, 
\end{align}
\end{subequations}
for all test functions~$(\blambdah, \bdpsih, 
\bdvarphih, \bdthetah) \in \Sp(\Th)^N\times\Mp(\Th)^N\times\Mp(\Th)^N\times\Mp(\Th)^N$, with~$\bwh(\cdot, 0)\in \Sp(\Th)^N$ an approximation of~$s'(\brho_0)$.
Here, the numerical fluxes~$\wFlux_h$ and~$\qFlux_h$ are approximations of the traces of~$\bwh$ and~$\bdqh$, respectively, on the skeleton of~$\Th$. 
They are defined on each facet~$F \in \Fh$ as
\begin{subequations}
\label{EQN::NUMERICAL-FLUXES}
\begin{align}
    \wFlux_h & := 
    \begin{cases}
\mvl{\bwh}_{\alpha_F}
         & \text{ if } F\in \Fho \text{ and } F 
         = \partial K_1 \cap \partial K_2 \text{ for some }K_1, K_2 \in \Th,\\
        \bwh & \text{ if } F \in \FhN,
    \end{cases} \\
    \qFlux_h & := 
    \begin{cases}
    \mvl{\bdqh}_{1-\alpha_F}
         + \eta_{F} 
        \jump{\bwh}_{\sf N} & \text{ if } F\in \Fho \text{ and } F = \partial K_1 \cap \partial K_2 \text{ for some }K_1, K_2 \in \Th,\\
        \boldsymbol{0} & \text{ if } F \in \FhN,
    \end{cases}
\end{align}
\end{subequations}
where the weighted-average 
parameter~$\alpha_F \in [0, 1]$ and the stabilization function~$\eta_F$ are defined on each facet~$F \in \Fho$. 
We define~$\eta_F$ 
as 
\begin{equation}\label{eq:stab}
\eta_F = \mathsf{h}_{F}^{-1}\Norm{A}_{L^{\infty}(\calD)^{N\times N}},
\end{equation}
where~$\mathsf{h}_{F}$ denotes the restriction of~$\mathsf{h}$ to~$F$.
Taking the~$L^\infty$ norm of~$A$ in~\eqref{eq:stab} may introduce additional diffusion. However, it avoids a nonlinear dependence of the stability term on~$\bu(\bwh)$.

\begin{remark}[Choice of~$\alpha_F$]
The choice of the weighted-average parameters~$\alpha_F$ has an effect on the stencil of the LDG discretization of the diffusion term. 
It is well-known that, 
with the standard choice~$\alpha_F = 1/2$ for all the internal facets in~$\Fh$, 
the degrees of freedom in a given element are coupled not only with those of the immediate neighbors but also with those of their neighbors (see, e.g., \cite[\S4.1]{Castillo:2002}). Strategies to reduce the stencil by appropriately choosing~$\vn_F$ and setting~$\alpha_F = 0$ or~$\alpha_F = 1$ are discussed in~\cite[\S3.3]{Sherwin_etal:2006} and~\cite{Castillo_2010}. 
\eremk
\end{remark}

\begin{remark}[Computation of~$\bdsigmah$]
The definition of~$\bdsigmah$ in~\eqref{EQN::SEMI-DISCRETE-2} is local. More precisely, given~$\bwh$, the construction of~$\bdsigmah$ requires only the solution of completely independent (naturally parallelizable) linear (in~$\bdsigmah$) problems on each element~$K \in \Th$. 
In each of these local problems, 
the components of~$\bdsigmah$ for the~$N$ species are coupled. 
This is a consequence of the presence of the matrices~$A(\bu(\bwh))^{{\sf T}}s''(\bu(\bwh))$ and~$A(\bu(\bwh))^{{\sf T}}$ on the left- and right-hand side integrals of~\eqref{EQN::SEMI-DISCRETE-2}, respectively.
\eremk
\end{remark}

Given fixed bases of~$\Sp(\Th)$ and~$\Mp(\Th)$, let~$\bM$, $B$, and~$S$ denote the corresponding matrix representations, respectively, of the bilinear forms
\begin{subequations}
\label{EQN::BILINEAR-FORMS}
\begin{align}
    \label{EQN::Mh}
    \bm_h(\bunderline{\zeta}_h, \dpsih) & : = 
    \int_\Omega \bunderline{\zeta}_h \cdot \dpsih \dx & & \forall \bunderline{\zeta}_h, \dpsih \in \Mp(\Th),\\
    \nonumber
    b_h(w_h, \dpsih) & := - \int_{\Fho} \mvl{w_h}_{\alpha_F} \jump{\dpsih}_{\sf N} \dS -\int_{\FhN} w_h \dpsih \cdot \vnOmega \dS  \\
    \label{EQN::Bh}
    & \qquad + \sum_{K \in \Th} \int_K w_h \nabla \cdot \dpsih \dx & & \forall (w_h, \dpsih) \in \Sp(\Th) \times \Mp(\Th),\\
    \label{EQN::Sh}
    s_h(w_h, \lambdah) & := \int_{\Fho} \eta_F \jump{w_h}_{\sf N} \cdot \jump{\lambdah}_{\sf N} \dS & &\forall w_h,\ \lambdah \in \Sp(\Th),
\end{align}
\end{subequations}
and let~$\calU_h$, $\calN_h$, $\calAt_h$, $\calA_h$, and~$\bF_h$ be the operators associated with the nonlinear functionals
\begin{subequations}
\label{EQN::NONLINEAR-FUNCTIONALS}
\begin{align}
\bu_h(\bwh, \bdphih) & := \int_\Omega \bu(\bwh) \cdot \bdphih \dx & & \forall \bwh, \bdphih \in \Sp(\Th)^N \\
\bn_h(\bwh; \bdsigmah, \bdvarphih) & :=  \int_\Omega A(\bu(\bwh))^{{\sf T}} s''(\bu(\bwh)) \bdsigmah 
\frob \bdvarphih \dx & & \forall (\bwh, \bdsigmah, \bdvarphih) \in \Sp(\Th)^N\times\Mp(\Th)^N\times\Mp(\Th)^N, \\
\bat_h(\bwh; \bdzh, \bdvarphih) & :=  \int_\Omega A(\bu(\bwh))^{{\sf T}} \bdzh 
\frob \bdvarphih \dx & & \forall (\bwh, \bdzh, \bdvarphih) \in \Sp(\Th)^N\times\Mp(\Th)^N\times\Mp(\Th)^N,\\
\ba_h(\bwh; \bdsigmah, \bdthetah) & :=  \int_\Omega A(\bu(\bwh)) \bdsigmah 
\frob \bdthetah \dx & & \forall (\bwh, \bdsigmah, \bdthetah) \in \Sp(\Th)^N\times\Mp(\Th)^N\times\Mp(\Th)^N, \\
\vf_h(\bwh, \blambdah) & := \int_\Omega \vf(\bu(\bwh)) \cdot \blambdah \dx & & \forall \bwh, \blambdah \in \Sp(\Th)^N. 
\end{align}
\end{subequations}
After summing~\eqref{EQN::SEMI-DISCRETE-1}--\eqref{EQN::SEMI-DISCRETE-4} over all the elements~$K \in \Th$, by the average-jump identity
\begin{equation}
\label{eq:flux-jump-identity}
\mvl{\lambdah}_{\alpha_F} \jump{\dpsih}_{\sf N} + \mvl{\dpsih}_{1 - \alpha_F} \cdot \jump{\lambdah}_{\sf N} = \jump{\lambdah \dpsih}_{\sf N},
\end{equation}
we get
\begin{subequations}
\begin{align}
\label{eq:semi_a}
   \sum_{i = 1}^N \bm_h(\dzetahi, \dpsihi) & = \sum_{i = 1}^N b_h(\whi, \dpsihi), 
   \\ 
   \label{eq:semi_b}
    \bn_h(\bwh; \bdsigmah, \bdvarphih) & = \bat_h(\bwh; \bdzh, \bdvarphih),\\ 
    \label{eq:semi_c}
    \sum_{i = 1}^N \bm_h(\dqhi, \dthetahi) & = \ba_h(\bwh; \bdsigmah, \bdthetah), \\ 
    \label{eq:semi_d}
    \frac{d}{dt} \bu_h(\bwh, \blambdah) + \sum_{i = 1}^N b_h(\lambdahi, \dqhi) + \sum_{i = 1}^N s_h(\whi, \lambdahi) & = \vf_h(\bwh, \blambdah).
\end{align}
\end{subequations}
The ordinary differential equation (ODE) system~\eqref{eq:semi_a}--\eqref{eq:semi_d} can be written in operator form as
\begin{subequations}
\label{EQN::NON-REDUCED-ODE-SYSTEM}
\begin{align}
\label{EQN::NON-REDUCED-ODE-SYSTEM-1}
(I_N \otimes \bM) \vZh & =  (I_N \otimes B) \vWh, \\
\label{EQN::NON-REDUCED-ODE-SYSTEM-2}
\calN_h(\vWh; \vSigmah) & =  \calAt_h(\vWh; \vZh), \\
\label{EQN::NON-REDUCED-ODE-SYSTEM-3}
(I_N \otimes \bM) \vQh & = \calA_h(\vWh; \vSigmah), \\
\label{EQN::NON-REDUCED-ODE-SYSTEM-4}
\frac{d}{dt} \calU_h(\vWh) + (I_N \otimes B^{{\sf T}}) \vQh + (I_N \otimes S) \vWh  & = \bF_h(\vWh), 
\end{align}
\end{subequations}
where~$I_N$ denotes the identity matrix of size $N$, $\otimes$ the Kronecker product
and~$\vWh,$ $\vZh,$ $\vSigmah,$ $\vQh$ are the vector representations of~$\bwh,\, \bdzh,\, \bdsigmah,\, \bdqh$, respectively. 

Since the nonlinear operators~$\calA_h$, $\calAt_h$, and~$\calN_h$ are linear with respect to their second argument, equations~\eqref{EQN::NON-REDUCED-ODE-SYSTEM-2} and~\eqref{EQN::NON-REDUCED-ODE-SYSTEM-3} can be rewritten as
\begin{equation*}
\begin{split}
\widehat{\calN}_h(\vWh) \vSigmah = \widehat{\calA_h}(\vWh)^{{\sf T}} \vZh, \qquad (I_N \otimes \bM) \vQh = \widehat{\calA_h}(\vWh) \vSigmah,
\end{split}
\end{equation*}
for some 
block-diagonal matrices $\widehat{\calN}_h(\vWh)$ and $\widehat{\calA_h}(\vWh)$. Moreover, due to Assumption~\ref{H2a}, the matrix~$\widehat{\calN}_h(\vWh)$ is positive definite.

Eliminating~$\vZh$ and~$\vQh$, we can write the ODE system~\eqref{EQN::NON-REDUCED-ODE-SYSTEM} in the compact form
\begin{subequations}
\label{EQN::SEMI-DISCRETE-FORMULATION-ODE}
\begin{align}
\label{EQN::SEMI-DISCRETE-FORMULATION-ODE-1}
    \Nh(\vWh; \vSigmah) & = \calAt_h(\vWh; (I_N \otimes \bM^{-1} B) \vWh), \\
    \label{EQN::SEMI-DISCRETE-FORMULATION-ODE-2}
    \frac{d}{dt} \calU_h(\vWh) + \left(I_N \otimes B^{{\sf T}}\bM^{-1} \right) \calA_h\left(\vWh; \vSigmah \right) & + (I_N \otimes S) \vWh = \bF_h(\vWh).
\end{align}    
\end{subequations}

In the following Lemma~\ref{LEMMA::BILINEAR-FORMS-NONLINEAR-FUNCTIONALS}, we prove some properties of the bilinear forms and nonlinear functionals defined in~\eqref{EQN::BILINEAR-FORMS} and~\eqref{EQN::NONLINEAR-FUNCTIONALS}, respectively. From here on, we denote by~$\nabla_h(\cdot)$ the elementwise~$\nabla(\cdot)$ operator, and by~$\Norm{\cdot}_{[L^2(\Omega)^d]^N}$ the~$L^2(\Omega)$ norm of functions with~$N$ $d$-vector-valued components.

\begin{lemma}
\label{LEMMA::BILINEAR-FORMS-NONLINEAR-FUNCTIONALS}
The bilinear forms defined in~\eqref{EQN::BILINEAR-FORMS} and the nonlinear functionals defined in~\eqref{EQN::NONLINEAR-FUNCTIONALS} satisfy the following continuity bounds:
\begin{subequations}
\label{EQN::BOUND-BILINEAR-FORMS}
\begin{align}
\label{EQN::BOUND-BILINEAR-FORMS-M}
    \sum_{i = 1}^N \bm_h(\dzetahi, \dpsihi) & \leq \Norm{\bdzh}_{[L^2(\Omega)^d]{}^N} \Norm{\bdpsih}_{[L^2(\Omega)^d]{}^N}, \\
\label{EQN::BOUND-BILINEAR-FORMS-B}
    \sum_{i = 1}^N b_h(\whi, \dpsihi) & \lesssim \left(
    \Norm{ \nabla_h \bwh}_{[L^2(\Omega)^d]^N}^2
    + \bigNorm{
    \mathsf{h}^{-\frac12}\jump{\bwh}_{\sf N}}_{[L^2(\Fho)^{d}]{}^N}^2 \right)^{\frac12}
    \Norm{\bdpsih}_{[L^2(\Omega)^d]{}^N}, \\
\label{EQN::BOUND-BILINEAR-FORMS-S}
    \sum_{i = 1}^N s_h(\whi, \lambdahi) & \lesssim \bigNorm{\eta_F^{\frac12} \jump{\bwh}_{\sf N}}_{[L^2(\Fho)^{d}]{}^N} \bigNorm{\eta_F^{\frac12} \jump{\blambdah}_{\sf N}}_{[L^2(\Fho)^{d}]{}^N}, \\
    \bu_h(\bwh, \bdphih) & \lesssim \Norm{\bdphih}_{L^2(\Omega)^N}, \\ 
\label{EQN::BOUND-BILINEAR-FORMS-atilde}
    \bat_h(\bwh; \bdzh, \bdvarphih) & \lesssim \Norm{\bdzh}_{[L^2(\Omega)^d]^N} \Norm{\bdvarphih}_{[L^2(\Omega)^d]^N},  \\
    \ba_h(\bwh; \bdsigmah, \bdthetah) & \lesssim \Norm{\bdsigmah}_{[L^2(\Omega)^d]^N} \Norm{\bdthetah}_{[L^2(\Omega)^d]^N},  \\
    \label{EQN::BOUND-BILINEAR-FORMS-F}
    \vf_h(\bwh, \blambdah) & \lesssim \Norm{\blambdah}_{L^2(\Omega)^N}
\end{align}
\end{subequations}
for all functions in the corresponding discrete spaces, with hidden constants independent of the mesh size $h$.
Moreover, the nonlinear functional~$\bn_h(\ \cdot;\  \cdot,\ \cdot)$ satisfies the following coercivity property: for all $\bwh\in \Sp(\Th)^N$, 
\begin{equation}
\label{EQN::COERCIVITY-Nh}
    \bn_h(\bwh; \bdsigmah, \bdsigmah) \geq \gamma \Norm{\bdsigmah}_{[L^2(\Omega)^d]{}^N}^2
    \qquad\forall \bdsigmah\in \Mp(\Th)^N,
\end{equation}
where~$\gamma$ is the constant in Assumption~\ref{H2a}.
\end{lemma}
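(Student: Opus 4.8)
The plan is to derive all the continuity bounds in~\eqref{EQN::BOUND-BILINEAR-FORMS} from the Cauchy--Schwarz inequality combined with the boundedness of~$\calD$ from Assumption~\ref{H1} (and, for the facet terms of $b_h$, a discrete trace inequality), and to obtain the coercivity~\eqref{EQN::COERCIVITY-Nh} by a pointwise application of Assumption~\ref{H2a}.

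Most of the estimates are essentially immediate. The bounds on $\bm_h$ and $s_h$ are the Cauchy--Schwarz inequality for the $L^2(\Omega)$ and the weighted $L^2(\Fho)$ inner products, respectively. For the nonlinear functionals $\bu_h$, $\bat_h$, $\ba_h$ and $\vf_h$, I would use that $\bu$ takes values in $\calD$, so that $\bu(\bwh(\bx))\in\calD$ for a.e.\ $\bx\in\Omega$; since $\calD$ is bounded and $A,\vf$ are continuous on $\overline{\calD}$ by Assumption~\ref{H1}, the quantities $\abs{\bu(\bwh)}$, the norm of the matrix $A(\bu(\bwh))$ (hence also of $A(\bu(\bwh))^Ts''(\bu(\bwh))$ is not needed here, only $A$ and $A^T$), and $\abs{\vf(\bu(\bwh))}$ are bounded a.e.\ by constants depending only on $\calD$, $A$ and $\vf$. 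The corresponding bounds in~\eqref{EQN::BOUND-BILINEAR-FORMS} then follow from Cauchy--Schwarz, the factor $\abs{\Omega}^{1/2}$ arising in the bounds for $\bu_h$ and $\vf_h$ being absorbed into the hidden constant.

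The only bound requiring a genuine computation, and the one I expect to be the main obstacle, is~\eqref{EQN::BOUND-BILINEAR-FORMS-B}. A direct application of Cauchy--Schwarz to the volume term $\sum_{K\in\Th}\int_K w_h\,\nabla\cdot\bdpsih\dx$ in~\eqref{EQN::Bh} would produce $\Norm{w_h}_{L^2(\Omega)}\Norm{\nabla_h\bdpsih}_{L^2(\Omega)}$, which has the wrong structure. Instead, I would integrate by parts elementwise, moving the divergence from $\bdpsih$ onto $w_h$, and then recombine the resulting $\partial K$ integrals with the facet terms in~\eqref{EQN::Bh} via the average--jump identity stated just before the lemma; the Neumann-facet contributions cancel, leaving
\[
b_h(w_h,\bdpsih)=-\sum_{K\in\Th}\int_K \nabla w_h\cdot\bdpsih\dx+\int_{\Fho}\mvl{\bdpsih}_{1-\alpha_F}\cdot\jump{w_h}_{\sf N}\dS .
\]
The first term is bounded by $\Norm{\nabla_h w_h}_{L^2(\Omega)}\Norm{\bdpsih}_{L^2(\Omega)}$. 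For the second, I would insert the weights $\mathsf{h}^{\pm1/2}$, apply Cauchy--Schwarz on $\Fho$, bound the weighted mean $\mvl{\bdpsih}_{1-\alpha_F}$ by convexity of $\abs{\cdot}^2$, and use a discrete trace inequality together with the shape-regularity~\eqref{EQ::SHAPEREG} and the fact that $\mathsf{h}$ equals, up to the fixed constant $\eta$, the minimum of the two adjacent element diameters, to get $\Norm{\mathsf{h}^{1/2}\mvl{\bdpsih}_{1-\alpha_F}}_{L^2(\Fho)}\lesssim\Norm{\bdpsih}_{L^2(\Omega)}$. Combining the two contributions and using $a+b\le\sqrt{2}\,(a^2+b^2)^{1/2}$ gives~\eqref{EQN::BOUND-BILINEAR-FORMS-B}.

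For the coercivity~\eqref{EQN::COERCIVITY-Nh}, I would argue pointwise. Fixing $\bx\in\Omega$ and setting $\brho:=\bu(\bwh(\bx))\in\calD$, the symmetry of the Hessian $s''(\brho)$ yields $(A(\brho)^Ts''(\brho)\bz)\cdot\bz=\bz\cdot(s''(\brho)A(\brho)\bz)$ for all $\bz\in\IR^N$; applying this for each spatial direction $k\in\{1,\dots,d\}$ with $\bz$ equal to the $\IR^N$-vector formed by the $k$-th entries of $\bunderline{\sigma}_{h,1}(\bx),\dots,\bunderline{\sigma}_{h,N}(\bx)$ and invoking Assumption~\ref{H2a} gives $A(\brho)^Ts''(\brho)\bdsigmah(\bx)\cdot\bdsigmah(\bx)\ge\gamma\abs{\bdsigmah(\bx)}^2$ for a.e.\ $\bx\in\Omega$. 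Integrating over $\Omega$ produces~\eqref{EQN::COERCIVITY-Nh}.
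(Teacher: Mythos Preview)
Your proposal is correct and follows essentially the same approach as the paper: the paper also integrates $b_h$ by parts elementwise, uses the average--jump identity to obtain the representation $b_h(w_h,\dpsih)=-\int_\Omega\nabla_h w_h\cdot\dpsih\,\dx+\int_{\Fho}\jump{w_h}_{\sf N}\cdot\mvl{\dpsih}_{1-\alpha_F}\,\dS$, and then bounds the facet term via weighted Cauchy--Schwarz and an inverse trace inequality; the remaining bounds and the coercivity are dispatched exactly as you describe. The only cosmetic difference is that the paper phrases the facet weights as~$\eta_F^{\pm 1/2}$ rather than~$\mathsf{h}^{\pm 1/2}$, which differ only by the fixed constant~$\Norm{A}_{L^\infty(\calD)^{N\times N}}^{1/2}$.
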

\begin{proof}
The coercivity property~\eqref{EQN::COERCIVITY-Nh} follows from Assumption~\ref{H2a}. 
For~\eqref{EQN::BOUND-BILINEAR-FORMS-B}, the 
average--jump identity~\eqref{eq:flux-jump-identity} and integration by parts give
\begin{align*}
b_h(\whi, \dpsihi) = - 
\int_{\Omega} \nabla_h \whi \cdot \dpsihi \dx
+ \int_{\Fho} \jump{\whi}_{\sf N} \cdot \mvl{\dpsihi}_{1 - \alpha_F} \dS.
\end{align*}
We estimate the volume term on the right-hand side with the Cauchy--Schwarz inequality. 
For the interface term, on each~$F\in\Fho$, we use the weighted Cauchy--Schwarz inequality with weights $\eta_F^{1/2}$ and $\eta_F^{-1/2}$ and the inverse trace inequality for $\dpsihi$, taking into account that, due to the definition of~$\eta_F$ in~\eqref{eq:stab}, $\eta_F^{-1/2}\lesssim\min\{h_{K_1}^{1/2},h_{K_2}^{1/2}\}$, where $K_1$ and $K_2$ are the two elements sharing~$F$. 
Estimate~\eqref{EQN::BOUND-BILINEAR-FORMS-B} readily follows.
The remaining bounds in~\eqref{EQN::BOUND-BILINEAR-FORMS} follow from Assumptions~\ref{H1}, the boundedness of~$\bu$ (see~\ref{H2}), and the Cauchy--Schwarz inequality.  
\end{proof}

We prove that, given~$\bwh\in \Sp(\Th)^N$, equations~\eqref{eq:semi_a} and~\eqref{eq:semi_b} define $\bdsigmah\in\Mp(\Th)^N$ in a unique way.
In vector representation, this entails that, given~$\vWh$, equation~\eqref{EQN::SEMI-DISCRETE-FORMULATION-ODE-1} defines~$\vSigmah=\vSigmah(\vWh)$ in a unique way.

\begin{proposition}\label{prop:elim_Sigma}
Given~$\bwh\in \Sp(\Th)^N$, equations~\eqref{eq:semi_a} and~\eqref{eq:semi_b} define $\bdsigmah\in\Mp(\Th)^N$ in a unique way. Moreover, $\bdsigmah$ satisfies
\begin{equation}\label{eq:stab_sigma}
\Norm{\bdsigmah}_{[L^2(\Omega)^d]{}^N}^2\lesssim
\Norm{ \nabla_h \bwh}_{[L^2(\Omega)^d]^N}^2
+ \bigNorm{
\mathsf{h}^{-\frac12}\jump{\bwh}_{\sf N}}_{[L^2(\Fho)^{d}]{}^N}^2,
\end{equation}
with hidden constant independent of the mesh size $h$.
\end{proposition}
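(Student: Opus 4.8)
The plan is to solve equations~\eqref{eq:semi_a} and~\eqref{eq:semi_b} sequentially: for a fixed $\bwh\in\Sp(\Th)^N$, equation~\eqref{eq:semi_a} first determines the auxiliary variable $\bdzh$, and then~\eqref{eq:semi_b} determines $\bdsigmah$ from $\bdzh$ (hence from $\bwh$); each step simultaneously produces an a~priori bound, and chaining the two bounds yields~\eqref{eq:stab_sigma}.

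First I would fix $\bwh$ and read~\eqref{eq:semi_a} as: find $\bdzh\in\Mp(\Th)^N$ such that $\sum_{i=1}^N\bm_h(\dzetahi,\dpsihi)=\sum_{i=1}^N b_h(\whi,\dpsihi)$ for all $\bdpsih\in\Mp(\Th)^N$. Since $\bm_h$ in~\eqref{EQN::Mh} is, componentwise, the $L^2(\Omega)^d$ inner product on the finite-dimensional space $\Mp(\Th)$, the left-hand side is a coercive (indeed, inner-product) bilinear form, while the right-hand side is a bounded linear functional of $\bdpsih$ by~\eqref{EQN::BOUND-BILINEAR-FORMS-B}; hence, by the Riesz representation theorem --- equivalently, by invertibility of the block mass matrix $I_N\otimes\bM$ --- there is a unique such $\bdzh$. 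Testing with $\bdpsih=\bdzh$ and invoking~\eqref{EQN::BOUND-BILINEAR-FORMS-B} once more gives $\Norm{\bdzh}_{[L^2(\Omega)^d]^N}^2\lesssim\big(\Norm{\nabla_h\bwh}_{[L^2(\Omega)^d]^N}^2+\bigNorm{\mathsf{h}^{-\frac12}\jump{\bwh}_{\sf N}}_{[L^2(\Fho)^d]^N}^2\big)^{1/2}\Norm{\bdzh}_{[L^2(\Omega)^d]^N}$, so that $\bdzh$ is controlled by the right-hand side of~\eqref{eq:stab_sigma}.

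Next, with $\bdzh$ now fixed, equation~\eqref{eq:semi_b} reads: find $\bdsigmah\in\Mp(\Th)^N$ such that $\bn_h(\bwh;\bdsigmah,\bdvarphih)=\bat_h(\bwh;\bdzh,\bdvarphih)$ for all $\bdvarphih\in\Mp(\Th)^N$. The map $(\bdsigmah,\bdvarphih)\mapsto\bn_h(\bwh;\bdsigmah,\bdvarphih)$ is bilinear on the finite-dimensional space $\Mp(\Th)^N$ and coercive by~\eqref{EQN::COERCIVITY-Nh}, so the associated matrix has trivial kernel and is therefore invertible; since $\bat_h(\bwh;\bdzh,\cdot)$ is a bounded linear functional, $\bdsigmah$ exists and is unique. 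Choosing $\bdvarphih=\bdsigmah$ and combining~\eqref{EQN::COERCIVITY-Nh} with the continuity bound~\eqref{EQN::BOUND-BILINEAR-FORMS-atilde} gives $\gamma\Norm{\bdsigmah}_{[L^2(\Omega)^d]^N}^2\le\bn_h(\bwh;\bdsigmah,\bdsigmah)=\bat_h(\bwh;\bdzh,\bdsigmah)\lesssim\Norm{\bdzh}_{[L^2(\Omega)^d]^N}\Norm{\bdsigmah}_{[L^2(\Omega)^d]^N}$, hence $\Norm{\bdsigmah}_{[L^2(\Omega)^d]^N}\lesssim\Norm{\bdzh}_{[L^2(\Omega)^d]^N}$. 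Combining with the previous step yields~\eqref{eq:stab_sigma}, with hidden constant depending only on $\gamma$, $\Norm{A}_{L^{\infty}(\calD)^{N\times N}}$, $\eta$, and (for $d=2,3$) the shape-regularity constant and the inverse-trace-inequality constant --- all independent of $h$.

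I do not expect a genuine obstacle here: the argument amounts to two Lax--Milgram-type solvability statements on finite-dimensional spaces, followed by a telescoping of estimates. The one subtlety worth flagging is that $A(\brho)^Ts''(\brho)$ need not be symmetric, so for~\eqref{eq:semi_b} one should invoke finite-dimensionality (equivalently, the non-symmetric Lax--Milgram lemma) rather than Riesz representation; coercivity~\eqref{EQN::COERCIVITY-Nh} nonetheless delivers both invertibility and the a~priori bound. A second, purely technical point --- already carried out in the proof of Lemma~\ref{LEMMA::BILINEAR-FORMS-NONLINEAR-FUNCTIONALS} --- is that the hidden constant in the bound for $\bdzh$ absorbs the inverse trace inequality together with the relation $\eta_F^{-1/2}\lesssim\min\{h_{K_1}^{1/2},h_{K_2}^{1/2}\}$ coming from~\eqref{EQN::DEF-h} and~\eqref{eq:stab}.
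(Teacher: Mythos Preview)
Your proposal is correct and follows essentially the same two-step approach as the paper: first apply the Lax--Milgram lemma to~\eqref{eq:semi_a} to obtain existence, uniqueness, and the a~priori bound for $\bdzh$, then apply it again to~\eqref{eq:semi_b} (using coercivity~\eqref{EQN::COERCIVITY-Nh} and continuity~\eqref{EQN::BOUND-BILINEAR-FORMS-atilde}) to obtain the same for $\bdsigmah$, and chain the two bounds. Your remark that the non-symmetric Lax--Milgram lemma is needed in the second step is apt; the paper simply invokes Lax--Milgram without belabouring this point.
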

\begin{proof}
\emph{(i)} Given $\bwh\in \Sp(\Th)^N$, there exists a unique~$\bdzh\in \Mp(\Th)^N$ solution to~\eqref{eq:semi_a}. Moreover, $\bdzh$ satisfies
\begin{equation*}\label{eq:stab(i)}
\Norm{\bdzh}_{[L^2(\Omega)^d]{}^N}^2\lesssim
\Norm{ \nabla_h \bwh}_{[L^2(\Omega)^d]^N}^2
+ \bigNorm{
\mathsf{h}^{-\frac12}\jump{\bwh}_{\sf N}}_{[L^2(\Fho)^{d}]{}^N}^2.
\end{equation*}
This follows from the Lax--Milgram lemma, which is applicable owing to~\eqref{EQN::BOUND-BILINEAR-FORMS-M} and~\eqref{EQN::BOUND-BILINEAR-FORMS-B}.\\
\emph{(ii)} Given $\bwh\in \Sp(\Th)^N$ and $\bdzh\in \Mp(\Th)^N$ from step \emph{(i)},
there exists a unique~$\bdsigmah = \bdsigmah(\bwh)\in\Mp(\Th)^N$ solution to~\eqref{eq:semi_b} that satisfies~\eqref{eq:stab_sigma}.
This follows again from the Lax--Milgram lemma, which is applicable owing to~\eqref{EQN::COERCIVITY-Nh}, \eqref{EQN::BOUND-BILINEAR-FORMS-atilde}, 
and~\eqref{EQN::BOUND-BILINEAR-FORMS-B}.
\end{proof}

We prove the following space-discrete entropy inequality, which is a discrete version of inequality~\eqref{EQN::CONTINUOUS-ENTROPY-STABILITY}.

\begin{proposition}\label{prop:entropy_ineq}
Any solution~$(\bwh, \bdsigmah)$ to the semidiscrete formulation~\eqref{EQN::SEMI-DISCRETE-FORMULATION-ODE} satisfies the following entropy inequality for all~$\tau \in (0, T]$: 
\begin{equation*}
\label{EQN::ENTROPY-STABILITY}
    \int_{\Omega} s(\bu(\bwh(\bx, \tau)) \dx + \gamma \int_{0}^{\tau}\Norm{\bdsigmah}_{[L^2(\Omega)^d]^N}^2\, \dt + \int_{0}^{\tau} \bigNorm{\eta_F^{\frac12} \jump{\bwh}_{\sf N}}_{[L^2(\Fho)^{d}]^N}^2\,\dt
    \leq \int_\Omega s(\bu(\bwh(\bx, 0))) \dx + \tau C_f |\Omega|.
\end{equation*}
\end{proposition}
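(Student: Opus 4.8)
The plan is to reproduce at the discrete level the derivation of the continuous entropy estimate~\eqref{EQN::CONTINUOUS-ENTROPY-STABILITY}: test the evolution equation with the discrete entropy variable~$\bwh$ itself, and use the auxiliary-variable structure~\eqref{eq:variables} (in particular the reformulation~\eqref{eq:variables-3} of the chain rule) to realize~\eqref{eq:chainrule} discretely. It is convenient to work with the unreduced form~\eqref{eq:semi_a}--\eqref{eq:semi_d} of the scheme: given~$(\bwh,\bdsigmah)$ solving~\eqref{EQN::SEMI-DISCRETE-FORMULATION-ODE}, recover~$\bdzh$ from~\eqref{eq:semi_a} and~$\bdqh$ from~\eqref{eq:semi_c} (both well posed by the Lax--Milgram lemma, as in Proposition~\ref{prop:elim_Sigma}), so that~$(\bwh,\bdzh,\bdsigmah,\bdqh)$ solves~\eqref{eq:semi_a}--\eqref{eq:semi_d}. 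Then take~$\blambdah=\bwh$ in~\eqref{eq:semi_d}.

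Three of the four resulting terms are handled directly. For the time-derivative term, since the discrete solution is differentiable in time and~$\bu=(s')^{-1}$ with~$s\in\EFC{2}{\calD}{(0,\infty)}$, we have~$s'(\bu(\bwh))=\bwh$, and the chain rule gives
\[
\int_\Omega\partial_t\big(\bu(\bwh)\big)\cdot\bwh\dx=\int_\Omega\partial_t\big(\bu(\bwh)\big)\cdot s'(\bu(\bwh))\dx=\frac{d}{dt}\int_\Omega s(\bu(\bwh))\dx .
\]
By the definition of~$s_h$, the stabilization term equals~$\sum_{i=1}^N s_h(\whi,\whi)=\Norm{\eta_F^{\frac12}\jump{\bwh}_{\sf N}}_{[L^2(\Fho)^d]^N}^2$. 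For the reaction term, since~$\bu$ maps~$\IR^N$ into~$\calD$ by Assumption~\ref{H2}, we have~$\bu(\bwh)\in\calD$ pointwise, so Assumption~\ref{H2b} applied pointwise yields~$\vf_h(\bwh,\bwh)=\int_\Omega\vf(\bu(\bwh))\cdot s'(\bu(\bwh))\dx\le C_f|\Omega|$.

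The crucial step is to show that the remaining term satisfies~$\sum_{i=1}^N b_h(\whi,\dqhi)=\bn_h(\bwh;\bdsigmah,\bdsigmah)$, and is therefore bounded below by~$\gamma\Norm{\bdsigmah}_{[L^2(\Omega)^d]^N}^2$ thanks to the coercivity~\eqref{EQN::COERCIVITY-Nh}. I would obtain this by chaining the auxiliary equations with suitably chosen test functions: taking~$\dpsihi=\dqhi$ in~\eqref{eq:semi_a} gives~$\sum_i b_h(\whi,\dqhi)=\sum_i\bm_h(\dzetahi,\dqhi)$; by the symmetry of~$\bm_h$ and the choice~$\dthetahi=\dzetahi$ in~\eqref{eq:semi_c} this equals~$\ba_h(\bwh;\bdsigmah,\bdzh)$; and, using the defining property of the transpose in the species index (so that~$A^T\bdzh\cdot\bdsigmah=\bdzh\cdot A\bdsigmah$ pointwise), the choice~$\bdvarphih=\bdsigmah$ in~\eqref{eq:semi_b} gives~$\ba_h(\bwh;\bdsigmah,\bdzh)=\bat_h(\bwh;\bdzh,\bdsigmah)=\bn_h(\bwh;\bdsigmah,\bdsigmah)$. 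Collecting the four terms of~\eqref{eq:semi_d} then yields
\[
\frac{d}{dt}\int_\Omega s(\bu(\bwh))\dx+\gamma\Norm{\bdsigmah}_{[L^2(\Omega)^d]^N}^2+\Norm{\eta_F^{\frac12}\jump{\bwh}_{\sf N}}_{[L^2(\Fho)^d]^N}^2\le C_f|\Omega| ,
\]
and integrating over~$(0,\tau)$ and rearranging proves the inequality; the initial-entropy term~$\int_\Omega s(\bu(\bwh(\cdot,0)))\dx$ is finite by Assumption~\ref{H2c} and the choice of~$\bwh(\cdot,0)$.

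I expect the only delicate points to be bookkeeping rather than analysis: tracking the transposes of~$A$ and the~$\circ$ products when the test functions are exchanged among~\eqref{eq:semi_a}--\eqref{eq:semi_c} (this is precisely where the design of the scheme via~\eqref{eq:variables-3} is essential), and justifying the interchange of~$\tfrac{d}{dt}$ with~$\int_\Omega$ in the entropy term, which needs the time regularity of the discrete solution together with the regularity of~$s$ and~$\bu$ in Assumption~\ref{H2}. No compactness or limiting argument is involved: the estimate is an algebraic consequence of the structure of the scheme, with the sign of the diffusive contribution secured by~\eqref{EQN::COERCIVITY-Nh} and that of the reaction contribution by~\ref{H2b}.
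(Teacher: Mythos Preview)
Your proposal is correct and follows essentially the same approach as the paper. The only cosmetic difference is that you work with the unreduced variational form~\eqref{eq:semi_a}--\eqref{eq:semi_d} and chain the bilinear forms by choosing appropriate test functions, whereas the paper phrases the identical computation in the matrix/operator notation of~\eqref{EQN::SEMI-DISCRETE-FORMULATION-ODE}, using the transpose of~$B^T\bM^{-1}$ to recognize~$(I_N\otimes\bM^{-1}B)\vWh=\vZh$ and then invoking~\eqref{EQN::SEMI-DISCRETE-2}; the resulting chain~$\sum_i b_h(\whi,\dqhi)=\ba_h(\bwh;\bdsigmah,\bdzh)=\bat_h(\bwh;\bdzh,\bdsigmah)=\bn_h(\bwh;\bdsigmah,\bdsigmah)\ge\gamma\Norm{\bdsigmah}^2$ is step-for-step the same.
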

\begin{proof}
Let~$\tau \in (0, T]$. Multiplying~\eqref{EQN::SEMI-DISCRETE-FORMULATION-ODE-2} by~$\vWh$ we get
\begin{equation}
\label{EQN::TIMES-WH}
        \DotProd{\frac{d}{dt} \calU_h(\vWh)}{\vWh} + \DotProd{\left(I_N \otimes B^{{\sf T}}\bM^{-1} \right) \calA_h\left(\vWh; \vSigmah \right)}{\vWh} + \DotProd{(I_N \otimes S) \vWh}{\vWh}  = \DotProd{\bF_h(\vWh)}{\vWh}.
\end{equation}
We treat each term in identity~\eqref{EQN::TIMES-WH} separately.

Since~$\bu = (s')^{-1}$, we can write~$\bwh$ as~$s'(\bu(\bwh))$. This, together with the chain rule, gives
\begin{align}
    \nonumber
    \DotProd{\frac{d}{dt} \calU_h(\vWh)}{\vWh} & = \int_\Omega \dpt (\bu(\bwh)) \cdot \bwh \dx = \int_\Omega \dpt(\bu(\bwh)) \cdot s'(\bu(\bwh)) \dx \\
    \label{EQN::AUX-ENTROPY-1}
    &    = \int_\Omega \dpt (s(\bu(\bwh))) \dx.
\end{align}
By using standard algebraic manipulations, equation~\eqref{EQN::SEMI-DISCRETE-2}, and Assumption~\ref{H2a}, we obtain
\begin{align}
    \nonumber
    \DotProd{\left(I_N \otimes B^{{\sf T}}\bM^{-1} \right) \calA_h\left(\vWh; \vSigmah \right)}{\vWh} & = \DotProd{\calA_h\left(\vWh; \vSigmah \right)}{\left(I_N \otimes \bM^{-1} B \right)  \vWh} \\
    \nonumber
    & = \int_\Omega A(\bu(\bwh)) \bdsigmah 
    \frob \bdzh \dx 
    = \int_\Omega A(\bu(\bwh))^{{\sf T}}  \bdzh 
    \frob \bdsigmah \dx \\
    \label{EQN::AUX-ENTROPY-2}
    & = \int_\Omega A(\bu(\bwh))^{{\sf T}} s''(\bu(\bwh)) \bdsigmah 
    \frob \bdsigmah \dx \geq \gamma \Norm{\bdsigmah}_{[L^2(\Omega)^d]^N}^2.
\end{align}
By the definition of the bilinear form~$s_h(\cdot, \cdot)$ in~\eqref{EQN::Sh}, we have
\begin{equation}
\label{EQN::AUX-ENTROPY-3}
    \DotProd{(I_N \otimes S) \vWh}{\vWh} = \bigNorm{\eta_F^{\frac12} \jump{\bwh}_{\sf N}}_{[L^2(\Fho)^{d}]^N}^2.
\end{equation}
Finally, the following upper bound follows from Assumption~\ref{H2b}:
\begin{equation}
\label{EQN::AUX-ENTROPY-4}
\begin{split}
    \DotProd{\bF_h(\vWh)}{\vWh} & = \int_\Omega \vf(\bu(\bwh)) \cdot \bwh \dx = \int_\Omega \vf(\bu(\bwh)) \cdot s'(\bu(\bwh)) \dx \leq C_f |\Omega|.
\end{split}
\end{equation}
Integrating in time~\eqref{EQN::TIMES-WH} from~$0$ to~$\tau$, and using bounds~\eqref{EQN::AUX-ENTROPY-1}, \eqref{EQN::AUX-ENTROPY-2}, \eqref{EQN::AUX-ENTROPY-3}, and~\eqref{EQN::AUX-ENTROPY-4}, we obtain the desired result. 
\end{proof}

\begin{remark}
The definition of~$\bu$ in Assumption~\ref{H2} guarantees that, in the semidiscrete formulation~\eqref{EQN::SEMI-DISCRETE}, the argument~$\bu(\bwh)$ in the nonlinear terms~$A(\cdot)$, $s''(\cdot)$, and~$\vf(\cdot)$ takes values in~$\calD$. 
Such a  property is essential in the existence and convergence results in Theorems~\ref{thm:existence_discrete} and~\ref{THM::H-CONVERGENCE}, and could not be guaranteed if a discrete approximation~$\brho_h \in \Sp(\Th)^N$ of~$\brho = \bu(\bw)$ were used instead.
\eremk
\end{remark}
\begin{remark}[Constant diffusion tensor~$A$]
If~$A$ is a constant diffusion tensor, the semidiscrete formulation~\eqref{EQN::SEMI-DISCRETE} reduces to 
\begin{subequations}
    \begin{align}
    \label{eq:aux1}
    \tNh(\vWh; \vSigmah) & = (I_N \otimes B) \vWh, \\
    \label{eq:aux2}
    \frac{d}{dt} \calU_h(\vWh) + \left(A \otimes B^{{\sf T}} \bM^{-1}\right)\vSigmah + (I_N \otimes S) \vWh & = \bF_h(\vWh),
\end{align}
\end{subequations}
where~$\tNh(\cdot, \cdot)$ is the operator associated with the nonlinear functional
$$ \widetilde{\boldsymbol{n}}_h(\bwh; \bdsigmah, \bdvarphih) := \int_\Omega s''(\bu(\bwh)) \bdsigmah 
\frob \bdvarphih \dx \qquad \forall (\bwh, \bdsigmah, \bdvarphih) \in \Sp(\Th)^N\times \Mp(\Th)^N\times \Mp(\Th)^N.$$
Moreover, if the entropy density is given by
$$s(\brho) = \sum_{i = 1}^N s_i(\rho_i),$$
matrix~$s''(\brho)$ is diagonal.
In such a case, the~$N$ components of~$\bdsigmah$ are no longer coupled \sg{in~\eqref{eq:aux1}}.
\eremk
\end{remark}
\begin{remark}[Differential operators]
Rewriting model~\eqref{EQN::MODEL-PROBLEM} in terms of the auxiliary variables~$\bwh$, $\bdzh$, $\bdsigmah$, and~$\bdqh$ allows us to localize the influence of the nonlinear terms in the semidiscrete formulation~\eqref{EQN::SEMI-DISCRETE}. 
More precisely, nonlinearities do not appear in interface terms, but only on local volume integrals. 
Consequently, the only non-block-diagonal operators in the method that have to be computed are the scalar matrices~$B$ and~$S$, which are the standard LDG  gradient and stability matrices, respectively. The resulting method is more efficient,
compared to interior-penalty discretizations with nonlinearities under the differential operators (and thus in the interface terms); \emph{cf.} \cite{Bonizzoni_Braukhoff_Jungel_Perugia:2020,Corti_Bonizzoni_Antoniett_2023}.
\eremk
\end{remark}
\begin{remark}[Discrete positivity and boundedness]
Obtaining a discrete approximation~$\brho_h \in \Sp(\Th)^N$ that respects the positivity (or boundedness) of the physical unknown~$\brho$ in a strong sense (i.e., pointwise) is a very difficult task. 
In fact, for high-order approximations, even if~$\brho_h$ is enforced to satisfy such bounds on the nodes (weak positivity), the physical constraints might still be violated; \emph{cf.} \cite{Barrenechea_Georgoulis_Pryer_Vesser:2023}. 
Our method provides an approximate solution~$\widetilde{\brho}_h = \bu(\bwh) \not\in \Sp(\Th)^N$ that satisfies the physical constraints for any degree of approximation.
\eremk
\end{remark}


\subsection{Fully discrete scheme \label{SECT::FULLY-DISCRETE}}

We discretize the ODE system~\eqref{EQN::SEMI-DISCRETE-FORMULATION-ODE} in time by 
the backward Euler method on a partition of the time interval~$(0,T)$ into~$N_t$ subintervals~$\{(t_{n-1},t_n)\}_{n=1}^{N_t}$, with $t_0=0$, $t_{N_t}=T$ and time steps $\tau_n:=t_n-t_{n-1}>0$. 
Moreover, we add a regularizing term with multiplicative parameter~$\varepsilon>0$, which is defined  
in terms of a symmetric, $h$-uniformly positive definite matrix~$C$ only depending on the space discretization. The parameter $\varepsilon$ does not depend on the diffusion matrix nor on $C$, and can be choosen arbitrarily small.
Such a regularizing term is essential in the existence and convergence results in Theorems~\ref{thm:existence_discrete} and~\ref{THM::H-CONVERGENCE}.
The fully discrete, regularized method
reads as follows:
\begin{itemize}
\item define~$\mathbf{R}_h^{0}$
as the vector representation of the~$L^2(\Omega)$-orthogonal projection of~$\brho_0$ in~$\Sp(\Th)^N$ denoted by~$\Pi_p^0\brho_0$,
and compute $(\vWh^{\varepsilon,1},\vSigmah^{\varepsilon,1})$ by solving
\begin{subequations}
\label{EQN::FULLY-DISCRETE-SCHEME-INIT}
\begin{align}
\Nh(\vWh^{\varepsilon,1}; \vSigmah^{\varepsilon,1}) = \calAt_h(\vWh^{\varepsilon,1}; (I_N \otimes \bM^{-1} B) \vWh^{\varepsilon,1}),  \\
    \label{EQN::FULLY-DISCRETE-SCHEME-2-INIT}
    \varepsilon \tau_1(I_N \otimes C)\vWh^{\varepsilon,1} +\left(\calU_h(\vWh^{\varepsilon,1})  - \mathbf{R}_h^{0}\right)  \hspace{7cm} \nonumber\\
     + \tau_1\left(I_N \otimes B^{{\sf T}}\bM^{-1} \right) \calA_h\left(\vWh^{\varepsilon,1}; \vSigmah^{\varepsilon,1} \right) + \tau_{1}(I_N \otimes S) \vWh^{\varepsilon,1}  
 =\tau_1 \bF_h(\vWh^{\varepsilon,1});
\end{align}
\end{subequations} 
\item for~$n=1,\ldots, N_t-1$, compute $(\vWh^{\varepsilon,n+1},\vSigmah^{\varepsilon,n+1})$ by solving
\begin{subequations}
\label{EQN::FULLY-DISCRETE-SCHEME}
\begin{align}
\Nh(\vWh^{\varepsilon,n+1}; \vSigmah^{\varepsilon,n+1})  = \calAt_h(\vWh^{\varepsilon,n+1}; (I_N \otimes \bM^{-1} B) \vWh^{\varepsilon,n+1}),  
    \\
    \label{EQN::FULLY-DISCRETE-SCHEME-2}
    \varepsilon \tau_{n+1}(I_N \otimes C)\vWh^{\varepsilon,n+1} 
+\left(\calU_h(\vWh^{\varepsilon,n+1})  -\calU_h(\vWh^{\varepsilon,n}) \right) \hspace{6cm} \nonumber\\
    + \tau_{n+1}\left(I_N \otimes B^{{\sf T}}\bM^{-1} \right) \calA_h\left(\vWh^{\varepsilon,n+1}; \vSigmah^{\varepsilon,n+1} \right) + \tau_{n + 1}(I_N \otimes S) \vWh^{\varepsilon,n+1}  
    =\tau_{n + 1}\bF_h(\vWh^{\varepsilon,n+1}).
\end{align}
\end{subequations} 
\end{itemize}

The symmetric, positive definite matrix $C$ defines a scalar product and a norm in $\Sp(\Th)$: given $\bwh$ and $\bvh$ in $\Sp(\Th)^N$ with vector representations $\vWh$ and $\vVh$, respectively, we set
\begin{equation}
\label{EQN::C-BILINEAR-FORM}
\sum_{i=1}^N c_h(w_{h,i},v_{h,i})
:=\bc_h(\bwh,\bvh)
:=\DotProd{(I_N \otimes C)\vWh}{\vVh} \quad \text{ and } \quad 
\Norm{\bwh}_{C}^2:=\DotProd{(I_N \otimes C)\vWh}{\vWh}.
\end{equation}

\begin{remark}[Discrete initial condition]
\label{rem:firststep}
The proposed fully discrete scheme imposes the initial condition~$\brho_0$ only weakly.
This subtle yet crucial difference from the standard backward Euler scheme allows us to naturally handle initial conditions that could not be accommodated if an initial datum~$\bw_0 = s'(\brho_0)$ were to be imposed strongly and~$\brho_0$ took values on~$\partial\calD$. This property significantly improves the stability of the method in such situations and avoids the need for artificial initial data employed in previous approaches.

Therefore, the use of~$\mathbf{R}_h^0$ in the first step of the fully discrete scheme~\eqref{EQN::FULLY-DISCRETE-SCHEME-INIT}--\eqref{EQN::FULLY-DISCRETE-SCHEME} has two motivations:
\begin{itemize}
\item it allows for an initial datum~$\brho_0(\bx) \in \overline{\calD}$ for a.e.~$\bx\in\Omega$, whereas~$\bw_0 = s'(\brho_0)$ may be not well defined if~$\brho_0$ takes values on~$\partial \calD$;
\item it leads to an~$h$-independent bound in the discrete entropy inequality in Theorem~\ref{THM::ENTROPY-INEQUALITY} below. 
\eremk
\end{itemize}
\end{remark}

\begin{remark}[Reduced nonlinear system]\label{rem:oneunknown}
Setting~$\widehat{\Eh}(\vWh) := \widehat{\calA_h}(\vWh) \widehat{\calN}_h(\vWh)^{-1} \widehat{\calA_h}(\vWh)^{{\sf T}}$, 
the fully discrete scheme~\eqref{EQN::FULLY-DISCRETE-SCHEME-INIT}--\eqref{EQN::FULLY-DISCRETE-SCHEME} can be written in terms of the~$\vWh^{\varepsilon,n+1}$-unknown only as follows:
\begin{itemize}
\item define~$\mathbf{R}_h^{0}$
as the vector representation of~$\Pi_p^0\brho_0$, and compute $\vWh^{\varepsilon,1}$ by solving 
\[
\begin{split}
    &\varepsilon \tau_1(I_N \otimes C)\vWh^{\varepsilon,1}
+\left(\calU_h(\vWh^{\varepsilon,1})  - \mathbf{R}_h^{0}\right) \\
&\qquad
+\tau_1\left[
(I_N \otimes B^{{\sf T}} \bM^{-1}) \widehat{\Eh}(\vWh^{\varepsilon,1})(I_N \otimes \bM^{-1} B) + (I_N \otimes S)
\right]\vWh^{\varepsilon,1}
=\tau_1\bF_h(\vWh^{\varepsilon,1});
\end{split}
\]
\item for~$n=1,\ldots, N_t-1$, compute $\vWh^{\varepsilon,n+1}$ by solving
\[
\begin{split}
&\varepsilon \tau_{n+1}(I_N \otimes C)\vWh^{\varepsilon,n+1} 
    +\left(\calU_h(\vWh^{\varepsilon,n+1})  -\calU_h(\vWh^{\varepsilon,n}) \right)\\
&\qquad    +\tau_{n+1}\left[
(I_N \otimes B^{{\sf T}} \bM^{-1}) \widehat{\Eh}(\vWh^{\varepsilon, n + 1})(I_N \otimes \bM^{-1} B) + (I_N \otimes S)
\right]\vWh^{\varepsilon,n+1}
=\tau_{n+1}\bF_h(\vWh^{\varepsilon,n+1}).    
\end{split}
\]
\end{itemize}
Due to the structure of~$\widehat{\calN_h}(\vWh)$ and~$\widehat{\calA_h}(\vWh)$, the matrix~$\widehat{\Eh}(\vWh)$ is block diagonal.
\eremk
\end{remark}

\begin{remark}[Higher-order time discretizations]
The convexity of the entropy in Assumption~\ref{H2} allows for a proof of a discrete entropy inequality for the first-order backward Euler scheme via the elementary inequality $s'(\by)\cdot(\by-\bx)\ge s(\by)-s(\bx)$ for all $\bx,\by\in\calD$. 
The use of arbitrary higher-order time discretizations and their analysis is more delicate. We believe that the natural extension of the proposed scheme is to use the entropy dissipative, high-order discontinuous Galerkin time discretization proposed in \cite{Egger:2019}. However, the analysis of the resulting method is considerably more challenging, as it requires the development of new \emph{ad hoc} discrete compactness results. We also refer to~\cite{Jungel_Miliv_2015,Jungel_Vetter_2024} for structure-preserving second-order BDF (Backward Differentiation Formula) schemes for certain classes of cross-diffusion systems. These schemes are based on entropy functions~$s$ that do not satisfy Hypothesis (H2), i.e., they may fail to preserve positivity or boundedness. 
\eremk
\end{remark}

\section{Discrete entropy stability and existence of discrete solutions  \label{SECT::WELL-POSEDNESS}}

In this section, we prove the entropy stability and existence of solutions to the fully discrete, regularized problem~\eqref{EQN::FULLY-DISCRETE-SCHEME-INIT}--\eqref{EQN::FULLY-DISCRETE-SCHEME}.

\begin{theorem}[Discrete entropy inequalities]
\label{THM::ENTROPY-INEQUALITY}
Any solution~$\{\vWh^{\varepsilon, n}\}_{n = 1}^{N_t}$ to problem~\eqref{EQN::FULLY-DISCRETE-SCHEME-INIT}--\eqref{EQN::FULLY-DISCRETE-SCHEME} satisfies
\begin{align}
\nonumber
\varepsilon  \tau_{1} \Norm{\bwh^{\varepsilon,1}}_C^2  
 + \int_{\Omega} s(\bu(\bwh^{\varepsilon,1})) \dx + \gamma  \tau_{1} \Norm{\bdsigmah^{\varepsilon, 1}}_{[L^2(\Omega)^d]^N}^2 
 +  \tau_{1} \bigNorm{\eta_F^{\frac12} \bigjump{\bwh^{\varepsilon,1}}_{\sf N}}_{[L^2(\Fho)^d]^N}^2
 & \\
 \leq  \int_{\Omega} s(\brho_0) \dx + C_f \tau_1 \abs{\Omega},
\label{EQN::DISCRETE-ENTROPY1-INIT}
\\
\nonumber
\varepsilon  \tau_{n+1} \Norm{\bwh^{\varepsilon, n + 1}}_C^2  
 + \int_{\Omega} s(\bu(\bwh^{\varepsilon, n+1})) \dx + \gamma  \tau_{n+1} \Norm{\bdsigmah^{\varepsilon, n+1}}_{[L^2(\Omega)^d]^N}^2 
 +  \tau_{n+1} \bigNorm{\eta_F^{\frac12} \bigjump{\bwh^{\varepsilon, n + 1}}_{\sf N}}_{[L^2(\Fho)^d]^N}^2
 &\\
 \leq  \int_{\Omega} s(\bu(\bwh^{\varepsilon, n})) \dx + C_f \tau_{n+1} \abs{\Omega},
 \label{EQN::DISCRETE-ENTROPY1}
\end{align}
and
\begin{align}\label{EQN::DISCRETE-ENTROPY2}
\varepsilon \sum_{n = 0}^{N_t - 1} \tau_{n+1} \Norm{\bwh^{\varepsilon, n + 1}}_C^2  
 &+ \int_{\Omega} s(\bu(\bwh^{\varepsilon, N_t})) \dx 
 + \gamma \sum_{n = 0}^{N_t - 1} \tau_{n+1}\Norm{\bdsigmah^{\varepsilon, n+1}}_{[L^2(\Omega)^d]^N}^2 \\
 &+ \sum_{n = 0}^{N_t - 1} \tau_{n+1} \bigNorm{\eta_F^{\frac12} \bigjump{\bwh^{\varepsilon, n + 1}}_{\sf N}}_{[L^2(\Fho)^d]^N}^2 
 \leq  \int_{\Omega} s(\brho_0) \dx + C_f T \abs{\Omega}. \nonumber
\end{align}
\end{theorem}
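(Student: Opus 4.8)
The plan is to mimic, at the fully discrete level, the energy argument used in Proposition~\ref{prop:entropy_ineq}, but now testing the backward Euler scheme with the discrete entropy variable~$\vWh^{\varepsilon,n+1}$ and exploiting convexity of~$s$ to control the time-difference term. First I would take the scalar product of~\eqref{EQN::FULLY-DISCRETE-SCHEME-2} (respectively \eqref{EQN::FULLY-DISCRETE-SCHEME-2-INIT} for the first step) with~$\vWh^{\varepsilon,n+1}$. The regularizing term immediately yields $\varepsilon\tau_{n+1}\Norm{\bwh^{\varepsilon,n+1}}_C^2$ by the definition~\eqref{EQN::C-BILINEAR-FORM}; the diffusion term is handled exactly as in~\eqref{EQN::AUX-ENTROPY-2}, using that $\vSigmah^{\varepsilon,n+1}$ solves the first equation of~\eqref{EQN::FULLY-DISCRETE-SCHEME} together with Assumption~\ref{H2a}, giving the lower bound $\gamma\tau_{n+1}\Norm{\bdsigmah^{\varepsilon,n+1}}_{[L^2(\Omega)^d]^N}^2$; the stabilization term gives $\tau_{n+1}\bigNorm{\eta_F^{1/2}\bigjump{\bwh^{\varepsilon,n+1}}_{\sf N}}^2$ as in~\eqref{EQN::AUX-ENTROPY-3}; and the right-hand side is bounded by $C_f\tau_{n+1}|\Omega|$ as in~\eqref{EQN::AUX-ENTROPY-4}, using $\bwh^{\varepsilon,n+1}=s'(\bu(\bwh^{\varepsilon,n+1}))$ and Assumption~\ref{H2b}.

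The key step is the time-difference term $\DotProd{\calU_h(\vWh^{\varepsilon,n+1})-\calU_h(\vWh^{\varepsilon,n})}{\vWh^{\varepsilon,n+1}}$, which equals $\int_\Omega\big(\bu(\bwh^{\varepsilon,n+1})-\bu(\bwh^{\varepsilon,n})\big)\cdot\bwh^{\varepsilon,n+1}\dx$. Since $\bwh^{\varepsilon,n+1}=s'(\brho^{n+1})$ with $\brho^{n+1}:=\bu(\bwh^{\varepsilon,n+1})$, and similarly for level~$n$, convexity of~$s$ gives the pointwise inequality $s(\brho^{n})-s(\brho^{n+1})\geq s'(\brho^{n+1})\cdot(\brho^{n}-\brho^{n+1})$, i.e. $\big(\bu(\bwh^{\varepsilon,n+1})-\bu(\bwh^{\varepsilon,n})\big)\cdot\bwh^{\varepsilon,n+1}\geq s(\bu(\bwh^{\varepsilon,n+1}))-s(\bu(\bwh^{\varepsilon,n}))$. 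Integrating over~$\Omega$ produces the telescoping quantity $\int_\Omega s(\bu(\bwh^{\varepsilon,n+1}))\dx-\int_\Omega s(\bu(\bwh^{\varepsilon,n}))\dx$. Combining all five estimates yields~\eqref{EQN::DISCRETE-ENTROPY1} for $n\geq 1$.

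For the first step~\eqref{EQN::DISCRETE-ENTROPY1-INIT}, the same argument applies except that the ``level~$0$'' contribution is $\DotProd{\mathbf{R}_h^0}{\vWh^{\varepsilon,1}}=\int_\Omega\Pi_p^0\brho_0\cdot\bwh^{\varepsilon,1}\dx$. Since $\bwh^{\varepsilon,1}\in\Sp(\Th)^N$ and $\Pi_p^0$ is the $L^2$-orthogonal projection onto $\Sp(\Th)^N$, this equals $\int_\Omega\brho_0\cdot\bwh^{\varepsilon,1}\dx$. Then, using convexity of~$s$ once more in the form $s(\brho_0)\geq s(\brho^1)+s'(\brho^1)\cdot(\brho_0-\brho^1)$ with $\brho^1=\bu(\bwh^{\varepsilon,1})$, we get $\big(\bu(\bwh^{\varepsilon,1})-\brho_0\big)\cdot\bwh^{\varepsilon,1}\geq s(\bu(\bwh^{\varepsilon,1}))-s(\brho_0)$, and integration over~$\Omega$ gives the bound $\int_\Omega s(\brho_0)\dx$ on the right-hand side, which is finite by Assumption~\ref{H2c}. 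This proves~\eqref{EQN::DISCRETE-ENTROPY1-INIT}.

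Finally,~\eqref{EQN::DISCRETE-ENTROPY2} follows by summing~\eqref{EQN::DISCRETE-ENTROPY1-INIT} and~\eqref{EQN::DISCRETE-ENTROPY1} over $n=1,\ldots,N_t-1$: the entropy integrals telescope, leaving $\int_\Omega s(\bu(\bwh^{\varepsilon,N_t}))\dx$ on the left and $\int_\Omega s(\brho_0)\dx$ on the right, while the time steps accumulate to $\sum_{n=0}^{N_t-1}\tau_{n+1}=T$ in the forcing term. The main obstacle — really the only nontrivial point — is the careful use of the convexity inequality for~$s$ to convert the implicit time-difference term into a telescoping entropy difference; everything else is a direct transcription of the semidiscrete proof. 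One should also note that all arguments $\bu(\bwh^{\varepsilon,n})$ lie in~$\calD$ by construction, so $s$, $s'$, $s''$, $A$, and $\vf$ are evaluated where Assumptions~\ref{H1}--\ref{H2} apply.
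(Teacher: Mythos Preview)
Your proposal is correct and follows essentially the same approach as the paper: test the fully discrete equations with~$\vWh^{\varepsilon,n+1}$, use the convexity of~$s$ (together with the~$L^2$-orthogonality of~$\Pi_p^0$ at the initial step) to turn the time-difference term into a telescoping entropy difference, and handle the remaining terms exactly as in the proof of Proposition~\ref{prop:entropy_ineq}. The only cosmetic difference is that the paper derives~\eqref{EQN::DISCRETE-ENTROPY2} by re-testing and summing the equations, whereas you sum the already-proven inequalities~\eqref{EQN::DISCRETE-ENTROPY1-INIT}--\eqref{EQN::DISCRETE-ENTROPY1}; both routes are equivalent.
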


\begin{proof}
We multiply~\eqref{EQN::FULLY-DISCRETE-SCHEME-2-INIT} by~$\vWh^{\varepsilon,1}$. For the first two terms, using the~$L^2(\Omega)$-orthogonality of~$\Pi_p^0$, the fact that~$\bu = (s')^{-1}$, and the convexity of~$s$,
we find that
\begin{align*}
    \varepsilon \tau_1\DotProd{(I_N \otimes C)\vWh^{\varepsilon,1}}{\vWh^{\varepsilon,1}}
    +
\DotProd{\calU_h(\vWh^{\varepsilon,1}) - \mathbf{R}_h^{0}}{\vWh^{\varepsilon,1}} 
        & = 
        \varepsilon \tau_1\Norm{\bwh^{\varepsilon,1}}_{C}^2
        +\int_{\Omega} \left(\bu(\bwh^{\varepsilon,1}) - \Pi_p^0\brho_0\right)\cdot \bwh^{\varepsilon,1} \dx\\
        & = 
        \varepsilon \tau_1\Norm{\bwh^{\varepsilon,1}}_{C}^2
        +\int_{\Omega} \left(\bu(\bwh^{\varepsilon,1}) - \brho_0\right)\cdot \bwh^{\varepsilon,1} \dx\\
        &  = \varepsilon \tau_1\Norm{\bwh^{\varepsilon,1}}_{C}^2
        +\int_{\Omega} \left(\bu(\bwh^{\varepsilon,1}) - \brho_0\right)\cdot s'(\bu(\bwh^{\varepsilon,1})) \dx 
        \\
        & \geq 
        \varepsilon \tau_1\Norm{\bwh^{\varepsilon,1}}_{C}^2
        +
        \int_{\Omega} \left(s(\bu(\bwh^{\varepsilon,1})) - s(\brho_0) \right) \dx.
    \end{align*}
For the remaining terms, proceeding exactly as in the proof of Proposition~\ref{prop:entropy_ineq}, we obtain
\begin{align*}
\tau_1\DotProd{\left(I_N \otimes B^{{\sf T}}\bM^{-1} \right) \calA_h(\vWh^{\varepsilon,1}; \vSigmah^{\varepsilon,1})}{\vWh^{\varepsilon,1}}
&\ge \tau_1 \gamma \Norm{\bdsigmah^{\varepsilon,1}}_{[L^2(\Omega)^d]^N}^2,\\
\tau_1\DotProd{(I_N \otimes S) \vWh^{\varepsilon,1}}{\vWh^{\varepsilon,1}}
&=\tau_1 \bigNorm{\eta_F^{\frac12} \jump{\bwh^{\varepsilon,1}}_{\sf N}}_{[L^2(\Fho)^d]^N}^2,\\
\tau_1\DotProd{\bF_h(\vWh^{\varepsilon,1})}{\vWh^{\varepsilon,1}}
&\le \tau_1 C_f |\Omega|.
\end{align*}
All the above estimates immediately give~\eqref{EQN::DISCRETE-ENTROPY1-INIT}.
In order to prove~\eqref{EQN::DISCRETE-ENTROPY1},
we proceed as above. We write explicitly the estimate of the first two terms for completeness:
\begin{align*}
    \varepsilon \tau_{n+1}\DotProd{(I_N \otimes C)\vWh^{\varepsilon,n+1}}{\vWh^{\varepsilon,n+1}}
    +&
\DotProd{\calU_h(\vWh^{\varepsilon,n+1}) - \calU_h(\vWh^{\varepsilon,n})}{\vWh^{\varepsilon,1}} \\
        & = 
        \varepsilon \tau_{n+1}\Norm{\bwh^{\varepsilon,n+1}}_{C}^2
        +\int_{\Omega} \left(\bu(\bwh^{\varepsilon,n+1}) - \bu(\bwh^{\varepsilon,n})\right) \cdot \bwh^{\varepsilon,n+1} \dx\\
        &  = \varepsilon \tau_{n+1}\Norm{\bwh^{\varepsilon,n+1}}_{C}^2
        +\int_{\Omega} \left(\bu(\bwh^{\varepsilon,n+1}) - \bu(\bwh^{\varepsilon,n})\right) \cdot s'(\bu(\bwh^{\varepsilon,n+1})) \dx 
        \\
        & \geq 
        \varepsilon \tau_{n+1}\Norm{\bwh^{\varepsilon,n+1}}_{C}^2
        +
        \int_{\Omega} \left(s(\bu(\bwh^{\varepsilon,n+1})) - s(\bu(\bwh^{\varepsilon,n})) \right) \dx.
    \end{align*}
Finally, to obtain~\eqref{EQN::DISCRETE-ENTROPY2}, we 
multiply~\eqref{EQN::FULLY-DISCRETE-SCHEME-2-INIT} and~\eqref{EQN::FULLY-DISCRETE-SCHEME-2} by~$\vWh^{\varepsilon, 1}$ and~$\vWh^{\varepsilon, n + 1}$, respectively,
sum over all indices~$n = 0, \ldots, N_t - 1$, and use
the same arguments as above.
\end{proof}

\begin{theorem}\label{thm:existence_discrete}
For~$n = 0, \ldots, N_t - 1$, there exists a solution~$\vWh^{\varepsilon,n+1}$ to problem~\eqref{EQN::FULLY-DISCRETE-SCHEME-INIT} ($n=0$) or to problem~\eqref{EQN::FULLY-DISCRETE-SCHEME} ($n\ge 1$). 
\end{theorem}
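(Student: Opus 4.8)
The plan is to prove existence via a topological degree / Brouwer fixed-point argument applied to the reduced nonlinear system from Remark~\ref{rem:oneunknown}, exploiting the a priori entropy estimate of Theorem~\ref{THM::ENTROPY-INEQUALITY} to confine the solution to a bounded set. Fix $n$ and write the reduced equation for the unknown $\vWh := \vWh^{\varepsilon, n+1}$ as $\bG(\vWh) = \boldsymbol 0$, where
\begin{equation*}
\bG(\vWh) := \varepsilon \tau_{n+1}(I_N \otimes C)\vWh + \calU_h(\vWh) - \bR^{n} + \tau_{n+1}\bigl[(I_N \otimes B^T\bM^{-1})\widehat{\Eh}(\vWh)(I_N \otimes \bM^{-1}B) + (I_N \otimes S)\bigr]\vWh - \tau_{n+1}\bF_h(\vWh),
\end{equation*}
with $\bR^0 := \bR_h^0$ and $\bR^n := \calU_h(\vWh^{\varepsilon, n})$ for $n \geq 1$ (the previous step, which exists by induction). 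First I would check that $\bG$ is a continuous map from $\IR^{N \dim \Sp(\Th)}$ to itself: continuity of $\calU_h$, $\bF_h$ follows from the regularity in Assumptions~\ref{H1}--\ref{H2} ($\bu \in \EFC{1}{\IR^N}{\calD}$, $A$ and $\vf$ continuous on $\overline\calD$), and continuity of $\vWh \mapsto \widehat{\Eh}(\vWh)$ follows from continuity of $\widehat{\calA_h}(\vWh)$ and of $\widehat{\calN}_h(\vWh)^{-1}$ (the latter well-defined since $\widehat{\calN}_h(\vWh)$ is positive definite by Assumption~\ref{H2a}).

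The core of the argument is a coercivity-type estimate for $\DotProd{\bG(\vWh)}{\vWh}$. I would repeat verbatim the computation in the proof of Theorem~\ref{THM::ENTROPY-INEQUALITY}: testing with $\vWh$ gives, using convexity of $s$ and $\bu = (s')^{-1}$,
\begin{equation*}
\DotProd{\bG(\vWh)}{\vWh} \geq \varepsilon\tau_{n+1}\Norm{\bwh}_C^2 + \int_\Omega \bigl(s(\bu(\bwh)) - s(\brho_*)\bigr)\dx + \gamma\tau_{n+1}\Norm{\bdsigmah}_{[L^2(\Omega)^d]^N}^2 + \tau_{n+1}\bigNorm{\eta_F^{\frac12}\jump{\bwh}_{\sf N}}^2_{[L^2(\Fho)]^N} - \tau_{n+1}C_f\abs{\Omega},
\end{equation*}
where $\brho_* = \brho_0$ if $n = 0$ and $\brho_* = \bu(\bwh^{\varepsilon,n})$ if $n \geq 1$ (here I identify the identity $\DotProd{(I_N \otimes B^T\bM^{-1})\widehat{\Eh}(\vWh)(I_N\otimes\bM^{-1}B)\vWh}{\vWh}$ with $\bn_h(\bwh;\bdsigmah,\bdsigmah)$ via $\bdsigmah = \widehat{\calN}_h(\vWh)^{-1}\widehat{\calA_h}(\vWh)^T(I_N\otimes\bM^{-1}B)\vWh$, exactly the elimination in Proposition~\ref{prop:elim_Sigma}). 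Since $s$ and hence $s \circ \bu$ is bounded below (in fact $s > 0$ on $\overline\calD$), and $\int_\Omega s(\brho_*)\dx < \infty$ is finite and independent of $\vWh$, the right-hand side is bounded below by $\varepsilon\tau_{n+1}\Norm{\bwh}_C^2 - \kappa$ for a constant $\kappa$ depending only on data and the previous step. Because $C$ is $h$-uniformly positive definite, $\Norm{\cdot}_C$ is a norm equivalent to the Euclidean norm on the (finite-dimensional) coefficient space, so $\DotProd{\bG(\vWh)}{\vWh} > 0$ whenever $\Norm{\vWh}$ (Euclidean) exceeds some radius $\varrho > 0$. By the standard corollary of Brouwer's fixed-point theorem (e.g.\ via the no-retraction argument), a continuous $\bG$ satisfying $\DotProd{\bG(\bv)}{\bv} > 0$ on the sphere $\{\Norm{\bv} = \varrho\}$ has a zero inside the ball, which yields $\vWh^{\varepsilon,n+1}$; one then recovers $\vSigmah^{\varepsilon,n+1}$ from the elimination formula. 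The induction on $n$ closes the proof.

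The main obstacle I anticipate is \emph{not} the coercivity bound, which is essentially a rerun of Theorem~\ref{THM::ENTROPY-INEQUALITY}, but rather verifying that $\bG$ is genuinely continuous (and finite-valued) on \emph{all} of $\IR^{N\dim\Sp(\Th)}$: one must be sure that $\bu(\bwh)$ makes sense for every $\bwh \in \Sp(\Th)^N$, i.e.\ that $\bu = (s')^{-1}$ is defined on all of $\IR^N$ with values in $\calD$ --- this is exactly guaranteed by Assumption~\ref{H2} ($\bu \in \EFC{1}{\IR^N}{\calD}$) and is the whole point of working with the entropy variable, and that $\widehat{\calN}_h(\vWh)$ stays uniformly invertible on bounded sets (clear from positive definiteness, but one should note the bound may degenerate as $\Norm{\vWh} \to \infty$; this is harmless since we only need continuity, not uniform invertibility, for the degree argument on a fixed ball). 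A secondary point worth a sentence is the $n = 0$ case, where $\brho_0 \in \overline\calD$ only, so $w_0 = s'(\brho_0)$ need not be defined --- but this causes no trouble because $\mathbf{R}_h^0 = \Pi_p^0\brho_0$ replaces $\calU_h(\vWh^{\varepsilon,0})$ and $\int_\Omega s(\brho_0)\dx < \infty$ by Assumption~\ref{H2c}, so the lower bound still holds.
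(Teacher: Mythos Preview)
Your argument is correct and complete. The paper, however, takes a different route: rather than working with the residual map~$\bG$ and the Brouwer-type coercivity lemma, it sets up a fixed-point map~$\Phi:\Sp(\Th)^N\to\Sp(\Th)^N$ by freezing the nonlinear coefficients at a given~$\bvh$ and solving the resulting \emph{linear} system in~$\bwh^{\varepsilon}$ (which is well-posed since~$I_N\otimes C$ is positive definite), and then invokes the Schaefer (Leray--Schauder alternative) fixed-point theorem. The required a~priori bound on the set~$\{\bwh:\bwh=\delta\Phi(\bwh),\ \delta\in[0,1]\}$ is obtained from exactly the same entropy estimate you use for coercivity of~$\langle\bG(\cdot),\cdot\rangle$, so the analytic core of both proofs is identical.

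The trade-offs are minor. Your approach is slightly more direct in finite dimensions: it avoids introducing the auxiliary map~$\Phi$ and the Leray--Schauder machinery, relying only on the elementary corollary of Brouwer that a continuous map with~$\langle\bG(\bv),\bv\rangle>0$ on a sphere has a zero inside. The paper's Schaefer-based argument, on the other hand, is phrased in a way that would generalize more transparently to an infinite-dimensional setting (where compactness of~$\Phi$ would become a genuine hypothesis), and it makes the role of the regularizing term~$\varepsilon\tau_{n+1}(I_N\otimes C)$ especially clear as the operator that is \emph{inverted} to define~$\Phi$. One small remark: the $h$-uniform positive definiteness of~$C$ that you invoke is stronger than needed for existence at fixed~$h$; mere positive definiteness suffices, since all norms on the finite-dimensional coefficient space are equivalent.
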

\begin{proof}
We begin with~$n=0$. Consider the linearized problem: given $\vVh\in \mathbb{R}^{\dim\left(\Sp(\Th)^N\right)}$, find $\vWh^{\varepsilon}\in \mathbb{R}^{\dim\left(\Sp(\Th)^N\right)}$ such that
\begin{equation*}
\varepsilon \tau_1(I_N \otimes C)\vWh^{\varepsilon}
=
-\calU_h(\vVh)+\mathbf{R}_h^0
-\tau_1\left(I_N \otimes B^{{\sf T}}\bM^{-1} \right) \calA_h\left(\vVh; \vSigmah(\vVh) \right)
- \tau_1(I_N \otimes S) \vVh
    +\tau_1\bF_h(\vVh),
\end{equation*}
where $\vSigmah(\vVh)$ is the unique solution to
\begin{equation*}
    \Nh\left(\vVh; \vSigmah(\vVh)\right) = \calAt_h(\vVh; (I_N \otimes \bM^{-1} B) \vVh);
\end{equation*}
see the text above Proposition~\ref{prop:elim_Sigma}. 
As $C$ is positive definite, $\vWh^{\varepsilon}$ is uniquely defined. This defines a function 
\[
\Phi:\Sp(\Th)^N \to \Sp(\Th)^N,\qquad
\bvh \mapsto \bwh^{\varepsilon},
\]
where $\bvh\in \Sp(\Th)^N$ and $\bwh^{\varepsilon}\in \Sp(\Th)^N$ are the functions whose coefficient vectors are $\vVh$ and $\vWh^{\varepsilon}$, respectively.
Due to the continuity of $A$, $\vf$, and $\bu$, and to estimate~\eqref{eq:stab_sigma},
$\Phi$ is continuous.

We apply the Schaefer fixed-point theorem~\cite[Thm.~4.3.2]{Smart_1974}
to prove that $\Phi$ has a fixed point, which implies the existence of solutions to~\eqref{EQN::FULLY-DISCRETE-SCHEME-INIT}. In order to do so, it only remains to prove that 
the following set is bounded:
\[
\{\bwh\in \Sp(\Th)^N:\ 
\bwh=\delta\Phi(\bwh), \ \delta\in[0,1]\}.
\]
Let~$\bwh\ne 0$ be in this set, and let~$\vWh$ be its coefficient vector.
Then, $\bwh=\delta\Phi(\bwh)$ for some~$\delta\in (0,1]$, namely~$\vWh$ satisfies
\begin{align*}
&\displaystyle{\frac{\varepsilon \tau_1}{\delta}}(I_N \otimes C)\vWh 
+\left(\calU_h(\vWh)-\mathbf{R}_h^0\right)\\ 
 &\qquad\qquad+\tau_1\left(I_N \otimes B^{{\sf T}}\bM^{-1} \right) \calA_h\left(\vWh; \vSigmah(\vWh) \right) + \tau_1(I_N \otimes S) \vWh
=\tau_1\bF_h(\vWh).
\end{align*}
We multiply the previous equation by $\vWh$. 
It follows as in the proof of Theorem~\ref{THM::ENTROPY-INEQUALITY} that
\begin{align*}
\displaystyle{\frac{\varepsilon \tau_1}{\delta}}\DotProd{(I_N \otimes C)\vWh}{\vWh}
    +
\DotProd{\calU_h(\vWh) - \mathbf{R}_h^0}{\vWh}
&\ge
\displaystyle{\frac{\varepsilon \tau_1}{\delta}}\Norm{\bwh}_{C}^2
        +
        \int_{\Omega} \left(s(\bu(\bwh)) - s(\brho_0) \right) \dx,\\
\tau_1\DotProd{\left(I_N \otimes B^{{\sf T}}\bM^{-1} \right) \calA_h\left(\vWh; \vSigmah(\vWh) \right)}{\vWh}
&\ge \tau_1 \gamma \Norm{\bdsigmah}_{[L^2(\Omega)^d]^N}^2,\\
\tau_1\DotProd{(I_N \otimes S) \vWh}{\vWh}
&=\tau_1 \bigNorm{\eta_F^{\frac12} \jump{\bwh}_{\sf N}}_{[L^2(\Fho)^d]^N}^2,\\
\tau_1\DotProd{\bF_h(\vWh)}{\vWh}
&\le \tau_1 C_f |\Omega|,
\end{align*}
from which we obtain 
\begin{equation*}
\displaystyle{\frac{\varepsilon\tau_1}{\delta}}\Norm{\bwh}_{C}^2 +
        \int_{\Omega} s(\bu(\bwh))\dx
+   \tau_1 \gamma \Norm{\bdsigmah}_{[L^2(\Omega)^d]^N}^2
+\tau_1 \bigNorm{\eta_F^{\frac12} \jump{\bwh}_{\sf N}}_{[L^2(\Fho)^d]^N}^2  \le
\displaystyle{\int_{\Omega}  s(\brho_0)  \dx}
+\tau_1 C_f |\Omega|.
\end{equation*}
Due to Assumption~\ref{H2c}, $\Norm{\bwh}_{C}$ is uniformly bounded with respect to~$\delta$. Therefore, 
the Schaefer fixed-point theorem
implies the existence of a fixed point of $\Phi$ ($\delta=1$) and therefore the existence of a solution~$\vWh^{\varepsilon,1}$ to problem~\eqref{EQN::FULLY-DISCRETE-SCHEME-INIT}.
In particular, for the function $\bwh^{\varepsilon,1}$ corresponding to the coefficient vector $\vWh^{\varepsilon,1}$, we have
\[
\int_{\Omega} s(\bu(\bwh^{\varepsilon,1}))\dx\le 
\displaystyle{\int_{\Omega}  s(\brho_0)  \dx}
+\tau_1 C_f |\Omega|.
\]

For~$n\ge 1$, we proceed by induction. Assuming the existence of $\vWh^{\varepsilon,n}$ and the boundedness of 
$\int_{\Omega} s(\bu(\bwh^{\varepsilon,n}))\dx$,
we apply the same arguments as above to the linearized problem 
\begin{align*} 
\varepsilon \tau_{n+1}(I_N \otimes C)\vWh^{\varepsilon}
=&
-\calU_h(\vVh)+\calU_h(\vWh^{\varepsilon,n})
-\tau_{n+1}\left(I_N \otimes B^{{\sf T}}\bM^{-1} \right) \calA_h\left(\vVh; \vSigmah(\vVh) \right)\\
&- \tau_{n+1}(I_N \otimes S) \vVh
    +\tau_{n+1}\bF_h(\vVh),
\end{align*}
to deduce that
\begin{align*}
\displaystyle{\frac{\varepsilon\tau_{n+1}}{\delta}}\Norm{\bwh}_{C}^2 +
        \int_{\Omega} s(\bu(\bwh))\dx
+   \tau_{n+1} \gamma \Norm{\bdsigmah}_{[L^2(\Omega)^d]^N}^2
+&\tau_{n+1} \bigNorm{\eta_F^{\frac12} \jump{\bwh}_{\sf N}}_{[L^2(\Fho)^d]^N}^2 \\
& \le
\displaystyle{\int_{\Omega}  s(\bu(\bwh^{\varepsilon,n}))  \dx}
+\tau_{n+1} C_f |\Omega|.
\end{align*}
The boundendess of~$\int_{\Omega} s(\bu(\bwh^{\varepsilon,n}))\dx$ entails the uniform boundedness of~$\Norm{\bwh}_{C}$, and the existence of a solution~$\vWh^{\varepsilon,n+1}$ to problem~\eqref{EQN::FULLY-DISCRETE-SCHEME}
is derived as above. Moreover,
\[
\int_{\Omega} s(\bu(\bwh^{\varepsilon,n+1}))\dx
\le \int_{\Omega} s(\bu(\bwh^{\varepsilon,n}))\dx+\tau_{n+1}C_f |\Omega|,
\]
which completes the proof.
\end{proof}

\begin{remark}[Regularizing term]
\label{rem:regularizing}
The regularizing term with multiplicative parameter~$\varepsilon>0$ in the fully discrete scheme~\eqref{EQN::FULLY-DISCRETE-SCHEME} is a discrete version of the one introduced for the semidiscrete-in-time formulation in~\cite[\S 3]{Jungel_2015}.
Such a term is used to enforce a numerical control on the~$L^{\infty}(\Omega)$ norm of the entropy variable~$\bwh$. This prevents~$\bu(\bwh)$ from approaching values near~$\partial \calD$, where~$s''$ typically becomes singular. In those cases, the~$\varepsilon$-regularization is needed to recover the stability of the nonlinear solver, as illustrated in Section~\ref{SECT::NUMERICAL-EXP} below.

We cannot perform the simultaneous limit $(\varepsilon,h)\to(0,0)$ in 
our formulation. The limit $\varepsilon\to 0$ and then $h\to 0$ was possible in the finite-volume scheme of \cite{Jungel_Zurek_2021}. We refer to Section~\ref{SEC::ASS_CONV} for some mathematical details, and to Section~\ref{SUBSECT::POROUS-MEDIUM} for numerical experiments for various values of~$\varepsilon$. 
\eremk
\end{remark}

%
\section{Convergence of the fully discrete scheme} \label{SECT::CONVERGENCE}
We fix $\varepsilon>0$ and a partition ${\mathcal I}_{\tau}$ of the time interval~$(0,T)$ defined as in Section~\ref{SECT::FULLY-DISCRETE}, where the index $\tau$ denotes the maximum element length. 
Consider a sequence of spatial meshes indexed by $m\in \IN$, $\left\{{\mathcal T}_{h_m}\right\}_{m}$, where $h_m$ is the maximum element diameter of ${\mathcal T}_{h_m}$.
We assume that $\{h_m\}_m$ is a decreasing sequence with~$h_m \leq 1$ for all~$m \in \IN$ and $\lim_{m\to\infty} h_m=0$.
We introduce the Local Discontinuous Galerkin gradient operator~$\nabla_{\!\sf DG}:\Sp(\mathcal{T}_{h_m})\to \Mp(\mathcal{T}_{h_m})$, which is defined by 
\begin{equation}\label{EQ::DGGRAD}
\int_{\Omega}\nabla_{\!\sf DG}\lambda_m\cdot\bunderline{\theta}_m\dx
=
\int_\Omega\left(\nabla_h\lambda_m-\mathcal{L}(\lambda_m)\right)\cdot\bunderline{\theta}_m\dx
\quad\forall \bunderline{\theta}_m\in \Mp(\mathcal{T}_{h_m}),
\end{equation}
with the jump lifting operator $\mathcal{L}:
\Sp(\mathcal{T}_{h_m})\to \Mp(\mathcal{T}_{h_m})$ given by
\[
\int_\Omega \mathcal{L}(\lambda_m)\cdot \bunderline{\theta}_m\dx
=\int_{\mathcal{F}_{h_m}^{\mathcal{I}}}\jump{\lambda_m}_{\sf N}\cdot \mvl{\bunderline{\theta}_m}_{1-\alpha_F}\dS 
\quad\forall \bunderline{\theta}_m\in \Mp(\mathcal{T}_{h_m}).
\]

\subsection{Assumptions for~\texorpdfstring{$h$}{}-convergence
}\label{SEC::ASS_CONV}

In the following Section~\ref{SEC::hCONVERGENCE}, we prove the convergence of fully discrete solutions to semidiscrete-in-time functions, as $m\to\infty$. To this aim, we
make the following abstract assumption, whose validity is discussed in Section~\ref{SEC::VALIDITY} below.

\begin{assumption} 
\label{ASSUMPTION::DG-NORM}
We set~$\ell = 1$ if~$d = 1$ and, if~$d = 2, 3$,
$$\ell = \begin{cases}
1 & \text{if } s'' A \in \EFC{0}{\overline{\calD}}{\IR^{N\times N}}, \\
2 & \text{otherwise}.
\end{cases}
$$
We assume that, for~$m \in \IN$, $\mathcal{T}_{h_m}$ and $p$ are such that
\begin{equation}\label{EQ::INCLUSION}
\mathcal{S}_\ell^{\sf cont}(\mathcal{T}_{h_m}):=
\mathcal{S}_\ell(\mathcal{T}_{h_m})\cap H^1(\Omega)\subset
\Sp(\mathcal{T}_{h_m}),
\end{equation}
and that there exists a DG norm~$\Norm{\cdot}_{\sf DG}$ in~$\Sp(\mathcal{T}_{h_m})^N$, which satisfies the following conditions:
\begin{enumerate}[label=\roman*), ref = \textit{\roman*})]
\item \label{A1} There exists a positive constant~$C_{\sf DG}$ independent of~$h_m$ such that \begin{equation*}
\sum_{i = 1}^N c_{h_m}(w_{m, i}, w_{m, i}) \geq C_{\sf DG} \Norm{\bw_m}_{\sf DG}^2 \qquad \forall \bw_m \in \Sp(\mathcal{T}_{h_m})^N.
\end{equation*}
\item \label{A2} If~$d = 1$ or~$\ell = 2$,
the following discrete Sobolev embedding is valid: there exists a positive constant~$C_S$ independent of~$h_m$ such that
\begin{equation}
\label{EQN::DISCRETE-SOBOLEV-EMBEDDING}
\Norm{\bw_{m}}_{L^\infty(\Omega)^N} \leq C_S \Norm{\bw_m}_{\sf DG} \qquad \forall \bw_m \in \Sp(\mathcal{T}_{h_m})^N.
\end{equation}
\item \label{A3} For any  sequence~$\{\bw_m\}_m$ with~$\bw_m \in \Sp(\mathcal{T}_{h_m})^N$ that is uniformly bounded in the DG norm, there exist a subsequence still denoted by~$\{\bw_m\}_m$ and a function~$\bw \in H^{\ell}(\Omega)^N$ such that, as~$m \rightarrow \infty$,
\begin{equation*}
\begin{split}
\nabla_{\!\sf DG}\bw_m\rightharpoonup
\nabla \bw & \qquad\text{weakly in $[L^2(\Omega)^d]^N$},  \\
\bw_m\to
\bw  & \qquad\text{strongly in }L^q(\Omega)^N,
\end{split}
\end{equation*}
with~$1\le q<6$, if $d=3$, or $1\le q<\infty$, if $d=1,2$. Moreover, for any~$\blambda \in H^\ell(\Omega)^N$ there exists a sequence ~$\{\blambda_m\}_m$ with $\blambda_m\in\mathcal{S}_\ell^{\sf cont}(\mathcal{T}_{h_m})^N$ such that, as~$m \rightarrow \infty$, it converges strongly in $H^1(\Omega)^N$ to $\blambda$ and
\begin{equation}
\label{EQN::CONVERGENCE-Ch}
\bc_{h_m} (\bw_m, \blambda_m) \rightarrow \int_{\Omega} \bigg(\sum_{|\boldsymbol{\alpha}| = \ell} D^{\boldsymbol{\alpha}} \bw \cdot D^{\boldsymbol{\alpha}} \blambda + \bw \cdot \blambda\bigg) \dx.
\end{equation}
\end{enumerate}
\end{assumption}

\subsection{\texorpdfstring{$h$}{}-convergence}
\label{SEC::hCONVERGENCE}

For $n=0,\ldots,N_t$, we denote by $\bw_m^{\varepsilon,n}$ a
solution to the fully discrete scheme~\eqref{EQN::FULLY-DISCRETE-SCHEME-INIT}--\eqref{EQN::FULLY-DISCRETE-SCHEME} on the spatial mesh~${\mathcal T}_{h_m}$ at the discrete time~$t_n$ of the fixed temporal mesh~${\mathcal I}_{\tau}$.

\begin{theorem}[$h$-convergence]\label{THM::H-CONVERGENCE}
Fix $\varepsilon>0$ and a temporal mesh ${\mathcal I}_\tau$.
Let Assumption~\ref{ASSUMPTION::DG-NORM} be satisfied. Then:
\begin{enumerate}[label=\Roman*), ref = \textit{\Roman*)}]
\item \label{Part1}
Setting $\brho_m^{\varepsilon,0}:=\Pi_p^0 \brho_0$, for $m\to\infty$, we have
\begin{equation}
\label{EQN::LIMIT-Rho0}
\int_\Omega (\brho_0-\brho_m^{\varepsilon,0})\cdot \bw \dx \to 0 \qquad\forall \bw\in H^1(\Omega)^N.
\end{equation}
Moreover, for any $n=1,\ldots,N_t$, there exists $\bw^{\varepsilon,n}\in H^{\ell}(\Omega)^N$ with $\bu\left(\bw^{\varepsilon,n}\right)\in H^1(\Omega)^N$ 
and a subsequence of $\left\{{\mathcal T}_{h_m}\right\}_{m}$ still denoted by $\left\{{\mathcal T}_{h_m}\right\}_{m}$
such that, as $m\to\infty$,
\[
\brho_m^{\varepsilon,n}:=\bu\left(\bw_m^{\varepsilon,n}\right)\to
\brho^{\varepsilon,n}:=\bu\left(\bw^{\varepsilon,n}\right)\qquad
\text{strongly in $L^r(\Omega)^N$ for all $r\in[1,\infty)$}.
\]
\item \label{Part2} Set, for convenience,~$\brho^{\varepsilon, 0} := \brho_0$.
For $n=0,\ldots,N_t-1$, $\bw^{\varepsilon,n+1}$ solves
\begin{equation}
\label{EQN::SEMI-DISCRET-IN-TIME}
\begin{split}
\varepsilon & \tau_{n+1}
\int_{\Omega} \bigg(\sum_{|\boldsymbol{\alpha}| = \ell} D^{\boldsymbol{\alpha}} \bw^{\varepsilon, n + 1} \cdot D^{\boldsymbol{\alpha}} \blambda + \bw^{\varepsilon, n + 1} \cdot \blambda\bigg) \dx
+\int_\Omega\left(
\bu\left(\bw^{\varepsilon,n+1}\right)-\brho^{\varepsilon,n}\right)
\cdot\blambda\dx\\
& +\tau_{n+1}
\int_\Omega A\left(\bu(\bw^{\varepsilon,n+1})\right)
\nabla\bu(\bw^{\varepsilon,n+1}) 
\frob \nabla\blambda\dx
=\tau_{n+1}\int_\Omega \vf\left(\bu(\bw^{\varepsilon,n+1})\right)\cdot\blambda\dx \quad \forall \blambda\in H^{\ell}(\Omega)^N.
\end{split}
\end{equation}
\item \label{Part3}
For $n=0,\ldots,N_t-1$, $\bw^{\varepsilon,n+1}$ satisfies
\begin{equation*}
\label{EQN::DISCRETE-ENTROPY1_limith}
\begin{split}
\varepsilon  \tau_{n+1} \Norm{\bw^{\varepsilon, n + 1}}_{H^{\ell}(\Omega)^N}^2  
 + \int_{\Omega} s(\bu(\bw^{\varepsilon, n+1})) \dx + \gamma  \tau_{n+1} \Norm{\nabla \bu(\bw^{\varepsilon, n+1})}_{[L^2(\Omega)^d]^N}^2\\
 \leq  \int_{\Omega} s(\brho^{\varepsilon, n}) \dx + C_f \tau_{n+1} \abs{\Omega},
\end{split}
\end{equation*}
and
\begin{equation*}
\label{EQN::DISCRETE-ENTROPY2_limith}
\begin{split}
\varepsilon \sum_{n = 0}^{N_t - 1} \tau_{n+1} \Norm{\bw^{\varepsilon, n + 1}}_{H^{\ell}(\Omega)^N}^2  
 + \int_{\Omega} s(\bu(\bw^{\varepsilon, N_t})) \dx + \gamma \sum_{n = 0}^{N_t - 1} \tau_{n+1} \Norm{\nabla \bu(\bw^{\varepsilon, n+1})}_{[L^2(\Omega)^d]^N}^2\\ 
 \leq  \int_{\Omega} s(\brho_0) \dx + C_f T \abs{\Omega}.
\end{split}
\end{equation*}
\end{enumerate}
\end{theorem}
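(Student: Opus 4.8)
The plan is a compactness argument over the mesh sequence $\{\mathcal{T}_{h_m}\}_m$, with $\varepsilon$ and $\mathcal{I}_\tau$ fixed, built on the $h$-uniform a priori bounds of Theorem~\ref{THM::ENTROPY-INEQUALITY} and the structural Assumption~\ref{ASSUMPTION::DG-NORM}; the four claims (convergence of the data projection, compactness, identification of the limit equation, limiting entropy estimates) are obtained in that order.

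\textbf{Step 1 (uniform bounds and extraction; most of Part~\ref{Part1}).} By Theorem~\ref{thm:existence_discrete} the discrete solutions $\bw_m^{\varepsilon,n}$ exist, and iterating the discrete entropy inequalities \eqref{EQN::DISCRETE-ENTROPY1-INIT}--\eqref{EQN::DISCRETE-ENTROPY1} (equivalently, \eqref{EQN::DISCRETE-ENTROPY2}) bounds every right-hand side by $\int_\Omega s(\brho_0)\dx+C_fT|\Omega|$, independently of $m$; hence, for each $n$, $\varepsilon\tau_{n+1}\Norm{\bw_m^{\varepsilon,n+1}}_C^2$, $\gamma\tau_{n+1}\Norm{\bdsigma_m^{\varepsilon,n+1}}_{[L^2(\Omega)^d]^N}^2$ and the jump terms are bounded uniformly in $m$, where $\bdsigma_m^{\varepsilon,n+1}$ is the $\bdsigma$-component associated with $\bw_m^{\varepsilon,n+1}$ via Proposition~\ref{prop:elim_Sigma}. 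By Assumption~\ref{A1}, $\{\bw_m^{\varepsilon,n+1}\}_m$ is bounded in the DG norm, so Assumption~\ref{A3} applies; extracting, inductively on $n=1,\dots,N_t$, a single diagonal subsequence, one obtains for each $n$ a limit $\bw^{\varepsilon,n}\in H^\ell(\Omega)^N$ with $\nabla_{\!\sf DG}\bw_m^{\varepsilon,n}\rightharpoonup\nabla\bw^{\varepsilon,n}$ weakly in $[L^2(\Omega)^d]^N$, $\bw_m^{\varepsilon,n}\to\bw^{\varepsilon,n}$ strongly in $L^q(\Omega)^N$ (and a.e. along a further subsequence), and $\bdsigma_m^{\varepsilon,n}\rightharpoonup\bdsigma^{\varepsilon,n}$ weakly in $[L^2(\Omega)^d]^N$. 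Since $\bu\in\EFC{1}{\IR^N}{\calD}$ with $\overline{\calD}$ bounded, $\brho_m^{\varepsilon,n}:=\bu(\bw_m^{\varepsilon,n})$ is uniformly bounded in $L^\infty$ and converges a.e. to $\brho^{\varepsilon,n}:=\bu(\bw^{\varepsilon,n})$, whence $\brho_m^{\varepsilon,n}\to\brho^{\varepsilon,n}$ strongly in $L^r(\Omega)^N$ for all $r\in[1,\infty)$ by dominated convergence. Finally, \eqref{EQN::LIMIT-Rho0} (and the auxiliary fact $\Pi_p^0\brho_0\to\brho_0$ in $L^2(\Omega)^N$) follows from $L^2$-orthogonality: $\int_\Omega(\brho_0-\Pi_p^0\brho_0)\cdot\bw\dx=\int_\Omega(\brho_0-\Pi_p^0\brho_0)\cdot(\bw-\Pi_p^0\bw)\dx$, bounded in modulus by $\Norm{\brho_0}_{L^2(\Omega)^N}\Norm{\bw-\Pi_p^0\bw}_{L^2(\Omega)^N}\to0$ for $\bw\in H^1(\Omega)^N$.

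\textbf{Step 2 (identification $\bdsigma^{\varepsilon,n+1}=-\nabla\brho^{\varepsilon,n+1}$ and $\brho^{\varepsilon,n+1}\in H^1(\Omega)^N$).} Summing \eqref{eq:semi_a} over the mesh and using the integration by parts in the proof of Lemma~\ref{LEMMA::BILINEAR-FORMS-NONLINEAR-FUNCTIONALS} together with \eqref{EQ::DGGRAD} and the definition of the lifting $\mathcal{L}$, one gets that the $\bdz$-component satisfies $\bdz_m^{\varepsilon,n+1}=-\nabla_{\!\sf DG}\bw_m^{\varepsilon,n+1}$, hence $\bdz_m^{\varepsilon,n+1}\rightharpoonup-\nabla\bw^{\varepsilon,n+1}$ weakly in $[L^2(\Omega)^d]^N$. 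Testing \eqref{eq:semi_b} with $\bdvarphih$, the $L^2(\Omega)$-orthogonal projection onto $\Mp(\mathcal{T}_{h_m})^N$ of an arbitrary $\bdvarphi\in C_c^\infty(\Omega)^{dN}$, and rewriting it (using symmetry of $s''$) as $\int_\Omega\bdsigma_m^{\varepsilon,n+1}\cdot\big(s''(\brho_m^{\varepsilon,n+1})A(\brho_m^{\varepsilon,n+1})\bdvarphih\big)\dx=\int_\Omega\bdz_m^{\varepsilon,n+1}\cdot\big(A(\brho_m^{\varepsilon,n+1})\bdvarphih\big)\dx$, I would pass to the limit: the coefficient matrices are uniformly bounded — if $\ell=1$ because $s''A\in\EFC{0}{\overline{\calD}}{\IR^{N\times N}}$; if $\ell=2$ or $d=1$ because the discrete Sobolev embedding of Assumption~\ref{A2}, with Assumption~\ref{A1} and the $h$-uniform bound on $\Norm{\bw_m^{\varepsilon,n+1}}_C$, confines $\brho_m^{\varepsilon,n+1}$ to a fixed compact subset of $\calD$ on which $s''$ is continuous — and converge a.e., so the bracketed factors converge strongly in $[L^2(\Omega)^d]^N$; combined with the weak $L^2$ convergence of $\bdsigma_m^{\varepsilon,n+1}$ and $\bdz_m^{\varepsilon,n+1}$ this yields $A(\brho^{\varepsilon,n+1})^Ts''(\brho^{\varepsilon,n+1})\bdsigma^{\varepsilon,n+1}=-A(\brho^{\varepsilon,n+1})^T\nabla\bw^{\varepsilon,n+1}$ a.e. Since $s''A$ is positive definite by Assumption~\ref{H2a}, $A(\brho^{\varepsilon,n+1})$ is invertible a.e., so $s''(\brho^{\varepsilon,n+1})\bdsigma^{\varepsilon,n+1}=-\nabla\bw^{\varepsilon,n+1}=-\nabla\big(s'(\brho^{\varepsilon,n+1})\big)$ a.e.; a chain-rule argument (truncating $\bw^{\varepsilon,n+1}$ where it is not in $L^\infty$, and using $\bu\in\EFC{1}{\IR^N}{\calD}$ with $\bu'=(s''\circ\bu)^{-1}$) then upgrades this to $\brho^{\varepsilon,n+1}\in H^1(\Omega)^N$ with $\nabla\brho^{\varepsilon,n+1}=-\bdsigma^{\varepsilon,n+1}$. \emph{This passage to the limit in the nonlinear relation \eqref{eq:semi_b}, together with the chain rule, is the main obstacle}: $s''A$ need not extend continuously to $\overline{\calD}$ when $\ell=2$, which is exactly why the regularizing term and Assumption~\ref{A2} are needed to keep $\bw_m^{\varepsilon,n+1}$ controlled in $L^\infty$.

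\textbf{Step 3 (passage to the limit in the scheme, Part~\ref{Part2}; limiting entropy inequalities, Part~\ref{Part3}).} For Part~\ref{Part2}, fix $\blambda\in H^\ell(\Omega)^N$ and let $\{\blambda_m\}_m$, $\blambda_m\in\mathcal{S}_\ell^{\sf cont}(\mathcal{T}_{h_m})^N\subset\Sp(\mathcal{T}_{h_m})^N$ (by \eqref{EQ::INCLUSION}), be the recovery sequence of Assumption~\ref{A3}, so $\blambda_m\to\blambda$ in $H^1(\Omega)^N$ and \eqref{EQN::CONVERGENCE-Ch} holds. Testing \eqref{EQN::FULLY-DISCRETE-SCHEME-2} (and \eqref{EQN::FULLY-DISCRETE-SCHEME-2-INIT} for $n=0$, with $\brho^{\varepsilon,0}:=\brho_0$) with the coefficient vector of $\blambda_m$ and letting $m\to\infty$: the regularizing term converges by \eqref{EQN::CONVERGENCE-Ch}; the term $\int_\Omega\big(\bu(\bw_m^{\varepsilon,n+1})-\bu(\bw_m^{\varepsilon,n})\big)\cdot\blambda_m\dx\to\int_\Omega\big(\bu(\bw^{\varepsilon,n+1})-\brho^{\varepsilon,n}\big)\cdot\blambda\dx$ by Step~1 (using $\Pi_p^0\brho_0\to\brho_0$ when $n=0$); the flux term equals $-\tau_{n+1}\int_\Omega\bdsigma_m^{\varepsilon,n+1}\cdot A(\brho_m^{\varepsilon,n+1})^T\nabla_{\!\sf DG}\blambda_m\dx$, and since the conforming $\blambda_m$ has zero jumps (hence zero lifting) $\nabla_{\!\sf DG}\blambda_m=\nabla\blambda_m\to\nabla\blambda$ strongly in $[L^2(\Omega)^d]^N$, so by Step~2 it converges to $-\tau_{n+1}\int_\Omega\bdsigma^{\varepsilon,n+1}\cdot A(\brho^{\varepsilon,n+1})^T\nabla\blambda\dx=\tau_{n+1}\int_\Omega A(\brho^{\varepsilon,n+1})\nabla\brho^{\varepsilon,n+1}\cdot\nabla\blambda\dx$; the stabilization term $\sum_i s_{h_m}(w_{m,i}^{\varepsilon,n+1},\lambda_{m,i})$ vanishes identically since $\jump{\blambda_m}_{\sf N}=0$; and the source term converges to $\tau_{n+1}\int_\Omega\vf(\brho^{\varepsilon,n+1})\cdot\blambda\dx$ by Assumption~\ref{H1} and strong $L^2$ convergence of $\brho_m^{\varepsilon,n+1}$; collecting these gives \eqref{EQN::SEMI-DISCRET-IN-TIME}. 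For Part~\ref{Part3}, pass to the $\liminf_m$ in \eqref{EQN::DISCRETE-ENTROPY1-INIT}--\eqref{EQN::DISCRETE-ENTROPY2}: on the right, $\int_\Omega s(\bu(\bw_m^{\varepsilon,n}))\dx\to\int_\Omega s(\brho^{\varepsilon,n})\dx$ by continuity of $s$ on $\overline{\calD}$ and dominated convergence ($\int_\Omega s(\brho_0)\dx$ fixed); on the left, $\int_\Omega s(\bu(\bw_m^{\varepsilon,n+1}))\dx\to\int_\Omega s(\brho^{\varepsilon,n+1})\dx$ likewise, $\liminf_m\Norm{\bdsigma_m^{\varepsilon,n+1}}_{[L^2(\Omega)^d]^N}^2\ge\Norm{\nabla\brho^{\varepsilon,n+1}}_{[L^2(\Omega)^d]^N}^2$ by weak lower semicontinuity and Step~2, the nonnegative jump terms are discarded, and $\liminf_m\Norm{\bw_m^{\varepsilon,n+1}}_C^2\ge\Norm{\bw^{\varepsilon,n+1}}_{H^\ell(\Omega)^N}^2$ follows from the Cauchy--Schwarz inequality in the $C$-inner product against the recovery sequence for $\blambda=\bw^{\varepsilon,n+1}$ together with \eqref{EQN::CONVERGENCE-Ch}; since each discrete inequality has only finitely many nonnegative summands, superadditivity of $\liminf$ yields the two claimed estimates.
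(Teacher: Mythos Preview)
Your argument is essentially the same as the paper's: uniform DG-norm bounds from the discrete entropy inequalities plus Assumption~\ref{ASSUMPTION::DG-NORM} give compactness; passing to the limit in~\eqref{EQN::SEMI-DISCRETE-2} identifies $\bdsigma^{\varepsilon,n+1}=-\nabla\bu(\bw^{\varepsilon,n+1})$ via the chain rule and~\ref{H2a}; and testing the main equation with the conforming recovery sequence (so that the jump stabilisation vanishes) yields~\eqref{EQN::SEMI-DISCRET-IN-TIME}. Your treatment of the chain rule in Step~2 is in fact more careful than the paper's one-line invocation.

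The one genuine difference is Part~\ref{Part3}. The paper derives the limiting entropy inequalities \emph{from the limit equation}~\eqref{EQN::SEMI-DISCRET-IN-TIME} by taking $\blambda=\bw^{\varepsilon,n+1}$ and repeating verbatim the proof of Theorem~\ref{THM::ENTROPY-INEQUALITY} at the continuous level; the regularising term then produces exactly $\int_\Omega(\sum_{|\boldsymbol{\alpha}|=\ell}|D^{\boldsymbol{\alpha}}\bw^{\varepsilon,n+1}|^2+|\bw^{\varepsilon,n+1}|^2)\dx$. You instead take the $\liminf$ in the discrete inequalities~\eqref{EQN::DISCRETE-ENTROPY1-INIT}--\eqref{EQN::DISCRETE-ENTROPY2}, which requires $\liminf_m\Norm{\bw_m^{\varepsilon,n+1}}_C^2\ge\Norm{\bw^{\varepsilon,n+1}}_{H^\ell}^2$. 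Your Cauchy--Schwarz argument for this needs, in addition to~\eqref{EQN::CONVERGENCE-Ch}, that $\limsup_m\Norm{\blambda_m}_C\le\Norm{\blambda}_{H^\ell}$ for the recovery sequence with $\blambda=\bw^{\varepsilon,n+1}$. For $\ell=1$ this is immediate (conforming $\blambda_m$ has no jumps, so $\Norm{\blambda_m}_C=\Norm{\blambda_m}_{H^1}\to\Norm{\blambda}_{H^1}$), but for $\ell=2$ the $C$-norm of $\blambda_m\in\mathcal{S}_2^{\sf cont}$ still contains the $\mathsf{h}^{-1}$-weighted gradient jumps and the discrete Hessian, and neither is controlled by $H^1$-convergence alone; this extra convergence is not part of Assumption~\ref{ASSUMPTION::DG-NORM} as stated (though it can be checked for the concrete choices in Section~\ref{SEC::VALIDITY}). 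The paper's route avoids this issue entirely and is the cleaner way to close Part~\ref{Part3}.
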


\begin{proof}
\emph{Part \ref{Part1}} 
The limit in~\eqref{EQN::LIMIT-Rho0} follows from the estimate
\[
\abs{\int_{\Omega}(\brho_0-\brho_m^{\varepsilon,0})\cdot \bw \dx}
= \abs{\int_{\Omega}\brho_0\cdot (\bw -\Pi_p^0\bw)\dx}
\le C h_m\Norm{\brho_0}_{L^2(\Omega)^N}|\bw|_{H^1(\Omega)^N},
\]
where $C>0$ is independent of $h_m$.
Since the right-hand side of~\eqref{EQN::DISCRETE-ENTROPY2} is uniformly bounded, estimate~\eqref{EQN::DISCRETE-ENTROPY1}, together with Assumption~\ref{ASSUMPTION::DG-NORM}, \ref{A1}, implies that $\left\{\bw_m^{\varepsilon,n}\right\}_m$ is bounded in the DG norm, uniformly with respect to~$h_m$.
Then, by Assumption~\ref{ASSUMPTION::DG-NORM}, \ref{A3},
there exist
a function $\bw^{\varepsilon,n}\in H^{\ell}(\Omega)^N$ and a subsequence of $\left\{\bw_m^{\varepsilon,n}\right\}_m$, still denoted by $\left\{\bw_m^{\varepsilon,n}\right\}_m$ such that, as $m\to \infty$, 
\[
\bw_m^{\varepsilon,n}
\to
\bw^{\varepsilon,n}
\qquad
\text{strongly in $L^q(\Omega)^N$},
\]
with $1\le q<6$, if $d=3$, or $1\le q<\infty$, if $d=1,2$. Up to extraction of another subsequence, we can also assume that $\bw_m^{\varepsilon,n}$ converges to $\bw^{\varepsilon,n}$ almost everywhere in~$\Omega$.  
As~$\bu\left(\bw_m^{\varepsilon, n + 1}\right) \in L^{\infty}(\Omega)^N$, the dominated convergence theorem implies that~$\brho_m^{\varepsilon,n}:=\bu\left(\bw_m^{\varepsilon,n}\right)$
converges strongly to~$\\brho^{\varepsilon,n} := \bu\left(\bw^{\varepsilon,n}\right)$ 
in~$L^r(\Omega)^N$ for all $r\in[1,\infty)$. This proves the first part of the theorem. 
\medskip

\emph{Part~\ref{Part2}}
Now we prove that the limit $\bw^{\varepsilon,n}$ solves  problem~\eqref{EQN::SEMI-DISCRET-IN-TIME} for~$n = 0, \ldots, N_t -1$. 
We write~\eqref{EQN::FULLY-DISCRETE-SCHEME-INIT}--\eqref{EQN::FULLY-DISCRETE-SCHEME} as a variational problem: 
\begin{subequations}
\begin{equation}
\begin{split}
&\int_{\Omega} A(\bu(\bw_m^{\varepsilon, n + 1}))^{{\sf T}} s''(\bu(\bw_m^{\varepsilon, n + 1})) \bdsigma_m^{\varepsilon, n + 1} 
\frob \bdvarphi_m \dx \\
\label{EQN::VARIATIONAL-FULLY-DISCRETE-1}
&\qquad\qquad = -\int_{\Omega} A(\bu(\bw_m^{\varepsilon, n + 1}))^{{\sf T}} \nabla_{\! \sf{DG}}\bw_m^{\varepsilon, n + 1} 
\frob \bdvarphi_m \dx \qquad  \forall \bdvarphi_m \in \Mp(\mathcal{T}_{h_m})^N, 
\end{split}
\end{equation}
\begin{equation}
\begin{split}
\varepsilon \tau_{n+1} & \bc_{h_m}(\bw_m^{\varepsilon, n + 1}, \blambda_m)
+ \int_{\Omega}\bu(\bw_m^{\varepsilon, n + 1}) \cdot \blambda_m   \dx \\
& - \tau_{n+1} \int_{\Omega}A(\bu(\bw_m^{\varepsilon, n + 1})) \bdsigma_m^{\varepsilon, n + 1} 
\frob \nabla_{\!\sf DG}  \blambda_m  \dx  + 
\tau_{n+1}\int_{\mathcal{F}_{h_m}^{\mathcal{I}}} \eta_F \jump{\bw_m^{\varepsilon, n + 1}}_{\sf N}\cdot \jump{\blambda_m}_{\sf N} \dS \\
\label{EQN::VARIATIONAL-FULLY-DISCRETE-2}
&
= \int_{\Omega} \brho_m^{\varepsilon, n} \cdot \blambda_m \dx + \tau_n \int_{\Omega} \vf(\bu(\bw_m^{\varepsilon, n + 1})) \cdot \blambda_m \dx 
\qquad \forall \blambda_m \in \Sp(\mathcal{T}_{h_m})^N.
\end{split}
\end{equation}
\end{subequations}
Here, $\bc_{h_m}(\cdot,\cdot)$ is the bilinear form in~\eqref{EQN::C-BILINEAR-FORM} and~$\nabla_{\sf DG} (\cdot)$ is the LDG gradient defined in~\eqref{EQ::DGGRAD}.
 
We infer from the discrete entropy inequalities in Theorem~\ref{THM::ENTROPY-INEQUALITY} that $\{\bdsigma_m^{\varepsilon,n+1}\}_m$ is bounded in the~$L^2(\Omega)$ norm. This implies that there exists $\bdsigma^{\varepsilon,n+1}\in [L^2(\Omega)^d]^N$ such that, up to extracting a subsequence, 
\[
\bdsigma_m^{\varepsilon,n+1}\rightharpoonup\bdsigma^{\varepsilon,n+1} \qquad\text{weakly in $[L^2(\Omega)^d]^N$}.
\]
Moreover, as $\{\bw_m^{\varepsilon,n+1}\}_m$ is bounded in the DG norm, by~Assumption~\ref{ASSUMPTION::DG-NORM}, \ref{A3}, there exists~$\bw^{\varepsilon, n + 1} \in H^1(\Omega)^N$ such that, up to a subsequence, 
\begin{equation*}
\begin{split}
\nabla_{\!\sf DG}\bw_m^{\varepsilon,n+1}\rightharpoonup
\nabla \bw^{\varepsilon,n+1} & \qquad\text{weakly in $[L^2(\Omega)^d]^N$},  \\
\bw_m^{\varepsilon,n+1}\to
\bw^{\varepsilon,n+1}  & \qquad\text{strongly in }L^q(\Omega)^N,
\end{split}
\end{equation*}
with~$1\le q<6$ if $d=3$, and $1\le q<\infty$ if $d=1,2$.
From Part~\ref{Part1}, we have that~$\bu(\bw_m^{\varepsilon,n + 1})\to \bu(\bw^{\varepsilon,n + 1})$ strongly in~$L^r(\Omega)^N$ for any $r\in[1,\infty)$ and therefore almost everywhere in~$\Omega$. 
Due to the continuity of~$A$, we also have that~$A(\bu(\bw_m^{\varepsilon,n + 1}))\to A(\bu(\bw^{\varepsilon,n + 1}))$ almost everywhere. 
Furthermore, as~$A$ is continuous in $\overline{\mathcal D}$ (see~\ref{H1}) and~$\bu : \IR^N \rightarrow \calD$, the sequence~$\{A(\bu(\bw_m^{\varepsilon,n + 1}))\}_m$ is uniformly bounded. Therefore, 
\[
A(\bu(\bw_m^{\varepsilon,n + 1}))\to A(\bu(\bw^{\varepsilon,n + 1}))\qquad\text{strongly in~$L^r(\Omega)^{N \times N}$ 
for all $r\in[1,\infty)$.}
\]
Similarly, we deduce that
\begin{equation}
\label{EQN::convergence-f}
\vf(\bu(\bw_m^{\varepsilon,n + 1}))\to \vf(\bu(\bw^{\varepsilon,n + 1}))\qquad\text{strongly in~$L^r(\Omega)^N$ 
for all $r\in[1,\infty)$.}
\end{equation}
The boundedness of $\{A(\bu(\bw_m^{\varepsilon, n + 1})) ^{{\sf T}} \nabla_{\sf DG} \bw_m^{\varepsilon, n + 1}\}_m$ in $[L^2(\Omega)^d]^N$implies that there exists $\bunderline{\Phi}\in [L^2(\Omega)^d]^N$ such that,
up to extracting a subsequence,
\[
A(\bu(\bw_m^{\varepsilon, n + 1})) ^{{\sf T}} \nabla_{\sf DG} \bw_m^{\varepsilon, n + 1}
\rightharpoonup
\bunderline{\Phi} \qquad\text{weakly in $[L^2(\Omega)^d]^N$}.
\]
As $A(\bu(\bw_m^{\varepsilon, n + 1})) ^{{\sf T}} \nabla_{\sf DG} \bw_m^{\varepsilon, n + 1}$ is the product of a term that converges strongly in $[L^r(\Omega)^d]^{N\times N}$ for any $r\in[1,\infty)$ and a term that converges weakly in $[L^2(\Omega)^d]^N$, it converges weakly in $[L^s(\Omega)^d]^N$ for any $s<2$ ($\frac1r+\frac12=\frac1s$) to the product of the two limits. Therefore, for the uniqueness of the weak limit, $\bunderline{\Phi}$ must be equal to the product of the two limits. This proves that
\begin{equation}\label{EQ::WEAK1}
A(\bu(\bw_m^{\varepsilon, n + 1}))^{{\sf T}} \nabla_{\! \sf{DG}}\bw_m^{\varepsilon, n + 1}
\rightharpoonup
A(\bu(\bw^{\varepsilon, n + 1}))^{{\sf T}} \nabla\bw^{\varepsilon, n + 1} \qquad\text{weakly in $[L^2(\Omega)^d]^N$}.
\end{equation}
Similarly, we have the convergence
\begin{equation}\label{EQ::WEAK2}
A(\bu(\bw_m^{\varepsilon, n + 1})) \bdsigma_m^{\varepsilon, n + 1}   \rightharpoonup A(\bu(\bw^{\varepsilon, n + 1})) \bdsigma^{\varepsilon, n + 1}\qquad\text{weakly in $[L^2(\Omega)^d]^N$}.
\end{equation}
Moreover, if~$d = 1$ or~$\ell = 2$, Assumption~\ref{ASSUMPTION::DG-NORM}, \ref{A2}, implies that~$\bw_m^{\varepsilon, n + 1}(\bx) \in \mathcal{K}$ a.e. in~$\Omega$, for some compact~$\mathcal{K} \subset \IR^N$ and all~$m \geq 0$. Therefore, $\bu(\bw_m^{\varepsilon, n + 1})(\bx) \in \widetilde{\mathcal{K}}$ a.e. in~$\IR^N$, for some compact~$\widetilde{\mathcal{K}} \subset \calD$ and all~$m \geq 0$. Since~$A^{{\sf T}} s''$ is continuous in~$\widetilde{\mathcal{K}}$, proceeding again as for~\eqref{EQ::WEAK1}, it follows that
\begin{equation}\label{EQ::WEAK}
A(\bu(\bw_m^{\varepsilon, n + 1}))^{{\sf T}} s''(\bu(\bw_m^{\varepsilon, n + 1})) \bdsigma_m^{\varepsilon, n + 1}
\rightharpoonup
A(\bu(\bw^{\varepsilon, n + 1}))^{{\sf T}} s''(\bu(\bw^{\varepsilon, n + 1})) \bdsigma^{\varepsilon, n + 1} \quad \ \text{ weakly in }[L^2(\Omega)^d]^N.
\end{equation} 
When~$s''A \in \EFC{0}{\overline{\calD}}{\IR^{N \times N}}$, the weak convergence in~\eqref{EQ::WEAK} follows from the boundedness of~$\calD$ without requiring Assumption~\ref{ASSUMPTION::DG-NORM}, \ref{A2}, to be satisfied.

In order to pass to the limit in both sides of equation~\eqref{EQN::VARIATIONAL-FULLY-DISCRETE-1}, we observe that, for every $\bdvarphi\in [L^2(\Omega)^d]^N$, there exists a sequence~$\{\bdvarphi_m\}_m\subset \Mp(\mathcal{T}_{h_m})^N$ that converges to $\bdvarphi$ strongly in $[L^2(\Omega)^d]^N$.
We test~\eqref{EQN::VARIATIONAL-FULLY-DISCRETE-1} with $\bdvarphi_m$. Then, the weak convergence~\eqref{EQ::WEAK} and the strong convergence of $\bdvarphi_m$ imply that
\[
\int_\Omega
A(\bu(\bw^{\varepsilon, n + 1}))^{{\sf T}} s''(\bu(\bw^{\varepsilon, n + 1})) \bdsigma^{\varepsilon, n + 1}
\frob 
\bdvarphi\dx
=-\int_\Omega
A(\bu(\bw^{\varepsilon, n + 1}))^{{\sf T}} \nabla\bw^{\varepsilon, n + 1} 
\frob 
\bdvarphi\dx\qquad\forall \bdvarphi\in [L^2(\Omega)^d]^N.
\]
This, together with the chain rule~$\nabla\bw^{\varepsilon, n + 1}=\nabla s^\prime(\bu(\bw^{\varepsilon, n + 1}))=s''(\bu(\bw^{\varepsilon, n + 1}))\nabla\bu(\bw^{\varepsilon, n + 1})$ 
and Assumption~\ref{H2a} leads to
\begin{equation}\label{EQ::SIGMA}
\bdsigma^{\varepsilon, n + 1}=-\nabla\bu(\bw^{\varepsilon, n + 1}).
\end{equation}

Next, we consider equation~\eqref{EQN::VARIATIONAL-FULLY-DISCRETE-2}. For any $\blambda\in H^\ell(\Omega)^N$, let $\{\blambda_m\}_m\subset \mathcal{S}_\ell^{\sf cont}(\mathcal{T}_{h_m})^N$ be a sequence as in Assumption~\ref{ASSUMPTION::DG-NORM}, \ref{A3}.
Due to the assumption $\mathcal{S}_\ell^{\sf cont}({\mathcal T}_{h_m})\subset \Sp({\mathcal T}_{h_m})$, we can test~\eqref{EQN::VARIATIONAL-FULLY-DISCRETE-2} with $\blambda_m$. 
Taking into account that $\blambda_m$ has zero jumps across interelement boundaries, the last term on the left-hand side in~\eqref{EQN::VARIATIONAL-FULLY-DISCRETE-2}, which involves $\jump{\blambda_m}_{\sf N}$, is equal to zero. 
We deduce from Assumption~\ref{ASSUMPTION::DG-NORM} (in particular,~\eqref{EQN::CONVERGENCE-Ch}), part~\ref{Part1} of the present theorem, and the limits~\eqref{EQN::convergence-f}  and~\eqref{EQ::WEAK2}, 
that the weak convergence to the appropriate limits of the remaining terms that involve trial functions. Together with the strong convergence of the terms containing test functions, we find that
\begin{equation}
\begin{aligned}
\nonumber
&\tau_{n+1}
\int_{\Omega} \bigg(\sum_{|\boldsymbol{\alpha}| = \ell} D^{\boldsymbol{\alpha}} \bw^{\varepsilon, n + 1} \cdot D^{\boldsymbol{\alpha}} \blambda + \bw^{\varepsilon, n + 1} \cdot \blambda\bigg) \dx
+  \int_\Omega \left( \bu(\bw^{\varepsilon, n + 1}) \cdot \blambda -\tau_{n+1} A(\bu(\bw^{\varepsilon, n + 1})) \bdsigma^{\varepsilon, n + 1} 
\frob \nabla  \blambda\right)  \dx \\
\nonumber
&\qquad\qquad
=  \int_\Omega \brho^{\varepsilon, n} \cdot \blambda \dx + \tau_{n+1}  \int_\Omega\vf(\bu(\bw^{\varepsilon, n + 1})) \cdot \blambda \dx 
\qquad \forall \blambda\in H^\ell(\Omega)^N.
\end{aligned}
\end{equation}
The combination of this with identity~\eqref{EQ::SIGMA} implies that, for~$n=0,\ldots,N_t-1$, $\bw^{\varepsilon,n + 1}$ solves~\eqref{EQN::SEMI-DISCRET-IN-TIME}. 
This completes the proof of second part of the theorem.
\medskip

\emph{Part \ref{Part3}} This part follows from~\eqref{EQN::SEMI-DISCRET-IN-TIME} in Part~\ref{Part2}, by proceeding as in Theorem~\ref{THM::ENTROPY-INEQUALITY}.
\end{proof}

\begin{remark}[Unbounded domains~$\calD$]
\label{REM::UNBOUNDED-D}
In the proof of the existence of discrete solutions and of the convergence to a solution to the $\varepsilon$-perturbed continuous problem~\eqref{EQN::SEMI-DISCRET-IN-TIME}, we repeatedly use the boundedness of~$\calD$ and the continuity of~$A$ and~$\vf$ on~$\overline{\calD}$ (see Assumption~\ref{H2a}).
Such a restriction can be lifted by using the argument
employed to prove limit~\eqref{EQ::WEAK}. 
More precisely, the presence of the regularizing term in the fully discrete scheme~\eqref{EQN::FULLY-DISCRETE-SCHEME} and Assumption~\ref{A2} guarantee that~$\bw_m^{\varepsilon, n}(\bx) \in \mathcal{K}$ a.e. in~$\Omega$, for some compact~$\mathcal{K} \subset \IR^N$, which implies that~$\bu(\bw_m^{\varepsilon, n})(\bx) \in \widetilde{\mathcal{K}}$ a.e. in~$\IR^N$, for some compact~$\widetilde{\mathcal{K}} \subset \calD$ and all~$h_m>0$.
Therefore, at each occurrence, the assumption of the boundedness of~$\calD$ can be replaced by the boundedness of~$\widetilde{\mathcal{K}}$ and the fact that the compact~$\widetilde{\mathcal{K}}$ is independent of~$h$.
\eremk
\end{remark}

\subsection{Convergence to a weak solution to the continuous problem}
\label{SECT::CONVERGENCE-epsilon-tau}
Let~$\mathcal{I}_{\tau}$ be a temporal mesh and~$\{\brho^{\varepsilon, n}\}_{n = 0}^{N_t}$ be the corresponding semidiscrete-in-time solution from Theorem~\ref{THM::H-CONVERGENCE}.
For simplicity, we assume~$\mathcal{I}_{\tau}$ to be uniform.
We define~$\brho^{(\varepsilon, \tau)} \in L^2(0, T; H^1(\Omega)^N)$ as the piecewise linear reconstruction in time of~$\{\brho^{\varepsilon, n}\}_{n = 0}^{N_t}$ defined by
\begin{equation}
\label{EQN::PIECEWISE-LINEAR-RECONSTR}
\brho^{(\varepsilon, \tau)}(\cdot, t) := \brho^{\varepsilon, n + 1}(\cdot) - \big(
t_{n+1}- t\big)\big(\brho^{\varepsilon, n + 1}(\cdot) - \brho^{\varepsilon, n}(\cdot)\big)/\tau \quad \text{ for } n\tau \leq t \leq (n + 1) \tau, \ 0 \leq n \leq N_t - 1.
\end{equation}
We also define the shift~${\tt{s}}_{\tau} \brho^{(\varepsilon, \tau)}(\cdot, t) = \brho^{\varepsilon, n}(\cdot)$ for~$n\tau \leq t \leq (n + 1)\tau,\ 0 \leq n \leq N_t - 1$.

We say that~$\brho$ is a \emph{weak solution to the continuous problem}~\eqref{EQN::MODEL-PROBLEM} if it satisfies
\begin{itemize}
\item $\brho \in L^2(0, T; H^1(\Omega)^N) \cap H^1(0, T; [H^1(\Omega)^N]')$; 
\item $\brho(\bx, t) \in \overline{\calD}$ a.e. in~$\Omega \times (0, T]$
(in particular, $\brho\in L^\infty(0,T; L^\infty(\Omega)^{N})$);
\item 
$\brho(\cdot, 0) = \brho_0(\cdot)$ in the sense of~$[H^1(\Omega)^N]'$;
\item $\displaystyle
\int_0^T \langle\dpt \brho, \blambda \rangle \dt + \int_0^T \int_{\Omega} A(\brho) \nabla \brho : \nabla \blambda\, \dx \dt = \int_0^T \int_{\Omega} \vf(\brho) \cdot \blambda\, \dx \dt \qquad \forall \blambda \in L^2(0, T; H^1(\Omega)^N)$,\\[0.2cm]
where~$\langle\cdot, \cdot \rangle$ denotes the duality between~$[H^1(\Omega)^N]'$ and~$H^1(\Omega)^N$.
\end{itemize}
\begin{theorem}
\label{THM::FULL-CONVERGENCE}
Let Assumption~\ref{ASSUMPTION::DG-NORM} be satisfied, and let~$\brho^{(\varepsilon, \tau)}$ be the piecewise linear  reconstruction of the semidiscrete-in-time solution from  Theorem~\ref{ASSUMPTION::DG-NORM}. Then, there exists a continuous weak solution~$\brho$ to problem~\eqref{EQN::MODEL-PROBLEM} such that, up to a subsequence that is not relabeled, for~$(\varepsilon, \tau) \rightarrow (0, 0)$, we have
\begin{align*}
\brho^{(\varepsilon, \tau)} \rightarrow \brho & \quad \text{ strongly in }L^r(0, T; L^r(\Omega)^N) \text{ for any~$r \in[1, \infty) 
$ and a.e. in~$\Omega \times (0, T]$} ,\\
\nabla \brho^{(\varepsilon, \tau)} \rightharpoonup \nabla \brho & \quad \text{ weakly in } L^2(0, T; [L^2(\Omega)^d]^N),\\
\tau^{-1}(\brho^{(\varepsilon, \tau)} - {\tt{s}}_{\tau} \brho^{(\varepsilon, \tau)}) \rightharpoonup \partial_t \brho & \quad \text{ weakly in }L^2(0, T; [H^{\ell}(\Omega)^N]'),
\end{align*}
where the integer~$\ell$ is as in Assumption~\ref{ASSUMPTION::DG-NORM}.
\end{theorem}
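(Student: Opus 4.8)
\emph{Overview and a priori bounds.} The plan is to follow the boundedness-by-entropy strategy: first extract uniform (in $\varepsilon$ and $\tau$) a priori bounds from the discrete entropy inequalities, and then use a discrete Aubin--Lions compactness argument to pass to the limit $(\varepsilon,\tau)\to(0,0)$ in the semidiscrete-in-time variational formulation~\eqref{EQN::SEMI-DISCRET-IN-TIME}. Summing the second inequality in Part~\ref{Part3} of Theorem~\ref{THM::H-CONVERGENCE} over $n$ and invoking Assumption~\ref{H2c}, I obtain, uniformly in $\varepsilon$ and $\tau$, that $\sum_{n}\tau_{n+1}\Norm{\nabla\bu(\bw^{\varepsilon,n+1})}_{[L^2(\Omega)^d]^N}^2$, $\varepsilon\sum_{n}\tau_{n+1}\Norm{\bw^{\varepsilon,n+1}}_{H^{\ell}(\Omega)^N}^2$, and $\int_\Omega s(\bu(\bw^{\varepsilon,n}))\dx$ are bounded. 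In terms of the piecewise linear reconstruction~\eqref{EQN::PIECEWISE-LINEAR-RECONSTR}, this yields a uniform bound on $\brho^{(\varepsilon,\tau)}$ in $L^2(0,T;H^1(\Omega)^N)$; moreover, since $\bu$ takes values in $\calD$ (Assumption~\ref{H1}), $\brho^{(\varepsilon,\tau)}(\bx,t)\in\overline{\calD}$ a.e., so $\brho^{(\varepsilon,\tau)}$ is bounded in $L^\infty(\QT)^N$. Next I test~\eqref{EQN::SEMI-DISCRET-IN-TIME} with an arbitrary $\blambda\in H^{\ell}(\Omega)^N$: using that $A(\bu(\bw^{\varepsilon,n+1}))\nabla\bu(\bw^{\varepsilon,n+1})$ is bounded in $[L^2(\Omega)^d]^N$ (gradient bound and~\ref{H1}), that $\vf(\bu(\cdot))$ is bounded by~\ref{H1}, and Cauchy--Schwarz on the $\varepsilon$-term (where $\varepsilon\tau_{n+1}\Norm{\bw^{\varepsilon,n+1}}_{H^{\ell}}\le\sqrt{\varepsilon\tau_{n+1}}\,(\varepsilon\tau_{n+1}\Norm{\bw^{\varepsilon,n+1}}_{H^{\ell}}^2)^{1/2}$), I deduce that $\tau^{-1}(\brho^{(\varepsilon,\tau)}-{\tt{s}}_{\tau}\brho^{(\varepsilon,\tau)})$ is bounded in $L^2(0,T;[H^{\ell}(\Omega)^N]')$ and that $\Norm{\brho^{(\varepsilon,\tau)}-{\tt{s}}_{\tau}\brho^{(\varepsilon,\tau)}}_{L^2(0,T;[H^{\ell}(\Omega)^N]')}=O(\tau)$.

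\emph{Compactness and extraction.} By these bounds and a nonlinear discrete-in-time Aubin--Lions--Simon compactness lemma (of Dreher--J\"ungel type), there are a subsequence (not relabeled) and a limit $\brho$, with $\brho(\bx,t)\in\overline{\calD}$ a.e., such that $\brho^{(\varepsilon,\tau)}\to\brho$ strongly in $L^2(0,T;L^2(\Omega)^N)$ and a.e. in $\QT$, $\nabla\brho^{(\varepsilon,\tau)}\rightharpoonup\nabla\brho$ weakly in $L^2(0,T;[L^2(\Omega)^d]^N)$, and $\tau^{-1}(\brho^{(\varepsilon,\tau)}-{\tt{s}}_{\tau}\brho^{(\varepsilon,\tau)})\rightharpoonup\partial_t\brho$ weakly in $L^2(0,T;[H^{\ell}(\Omega)^N]')$; the uniform $L^\infty(\QT)$ bound upgrades the strong convergence to $L^r(0,T;L^r(\Omega)^N)$ for every $r<\infty$, by interpolation (or dominated convergence). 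Since ${\tt{s}}_{\tau}\brho^{(\varepsilon,\tau)}$ and $\brho^{(\varepsilon,\tau)}$ differ by an $O(\tau)$ term in $L^2(0,T;[H^{\ell}]')$ while sharing the same uniform $L^2(0,T;H^1)\cap L^\infty$ bounds, they have the same limit $\brho$ in the strong $L^r$ and a.e.\ senses. Continuity of $A$ and $\vf$ on $\overline{\calD}$, a.e. convergence, and the $L^\infty$ bound then give $A(\brho^{(\varepsilon,\tau)})\to A(\brho)$ and $\vf(\brho^{(\varepsilon,\tau)})\to\vf(\brho)$ strongly in every $L^r(\QT)$, $r<\infty$.

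\emph{Passing to the limit and conclusion.} Summing~\eqref{EQN::SEMI-DISCRET-IN-TIME} over $n$ against time-dependent test functions (equivalently, integrating the piecewise-constant-in-time reconstruction against $\blambda\in L^2(0,T;H^{\ell}(\Omega)^N)$), I will pass to the limit term by term: the time-derivative term by the weak convergence of the difference quotient; the $\varepsilon$-term by Cauchy--Schwarz (it is $O(\sqrt{\varepsilon})\to 0$); the reaction term by the strong convergence of $\vf(\brho^{(\varepsilon,\tau)})$; and the diffusion term $\int A(\brho^{(\varepsilon,\tau)})\nabla\brho^{(\varepsilon,\tau)}:\nabla\blambda$ by the strong-times-weak argument already used for~\eqref{EQ::WEAK1} (strong $L^r$ convergence of $A(\brho^{(\varepsilon,\tau)})$ against weak $L^2$ convergence of $\nabla\brho^{(\varepsilon,\tau)}$ yields $A(\brho)\nabla\brho$ as weak limit in $L^s(\QT)$, $s<2$). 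Since the surviving terms involve only first derivatives of $\blambda$, a density argument extends the identity to all $\blambda\in L^2(0,T;H^1(\Omega)^N)$. The initial condition $\brho(\cdot,0)=\brho_0$ in $[H^1(\Omega)^N]'$ follows from the $H^1(0,T;[H^{\ell}]')$ regularity (hence weak-in-time continuity), the form of the linear reconstruction, and $\Pi_p^0\brho_0\to\brho_0$ (cf.~\eqref{EQN::LIMIT-Rho0}); the confinement $\brho(\bx,t)\in\overline{\calD}$ and the $L^r$ regularity have already been recorded, so $\brho$ is a continuous weak solution and the stated convergences hold.

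\emph{Main obstacle.} The delicate point is the compactness step: the scheme controls only a discrete-in-time difference quotient in the dual space $[H^{\ell}(\Omega)^N]'$ (with $\ell$ possibly $2$), so one must invoke the appropriate nonlinear discrete Aubin--Lions lemma and carefully reconcile the piecewise-constant and piecewise-linear time reconstructions. Closely related is that the $\varepsilon$-regularization yields only a degenerate (as $\varepsilon\to 0$) $H^{\ell}$-bound on $\bw^{\varepsilon,n}$, so no compactness of the entropy variable itself is available; all nonlinearities must therefore be handled through $\brho^{(\varepsilon,\tau)}=\bu(\bw^{\varepsilon,n+1})$, whose strong convergence rests solely on the entropy-dissipation bound on $\nabla\bu(\bw^{\varepsilon,n+1})$ combined with the pointwise confinement to the bounded set $\overline{\calD}$.
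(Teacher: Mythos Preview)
Your proposal is correct and follows essentially the same approach as the paper, which does not give a detailed proof but simply refers to steps~2 and~3 of~\cite[\S3]{Jungel_2015}; those steps are precisely the entropy a~priori bounds, discrete Aubin--Lions compactness, and term-by-term passage to the limit that you outline. In fact, you have spelled out more of the argument than the paper itself does.
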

\begin{proof}
The proof follows closely the arguments in steps 2 and 3 of~\cite[\S3]{Jungel_2015}. 
\end{proof}

\subsection{Validity of Assumption~\ref{ASSUMPTION::DG-NORM}}\label{SEC::VALIDITY}

The proof of Theorem~\ref{THM::H-CONVERGENCE} strongly relies on
the validity of Assumption~\ref{ASSUMPTION::DG-NORM}. Due to our mesh assumptions, inclusion~\eqref{EQ::INCLUSION} is satisfied whenever $p\ge\ell$.
Before discussing
the existence of a bilinear form~$c_h(\cdot, \cdot)$ and a DG norm~$\Norm{\cdot}_{\sf DG}$ with the properties~\ref{A1}--\ref{A3} in Assumption~\ref{ASSUMPTION::DG-NORM},
we prove
the following estimate, which is an extension of~\cite[Thm. 4.4]{Buffa_Ortner_2009} to the cases~$q = 1, \ p = 1, 2$. 
\begin{lemma}[Broken trace estimate]
\label{LEMMA::BROKEN-TRACE}
For~$r \in \{1, 2\}$, we have the following estimate:
\begin{equation*}
\Norm{v}_{L^1(\partial \Omega)} \lesssim \Norm{v}_{L^r(\Omega)} + 
\Norm{\nabla_h v}_{L^r(\Omega)^d}
+ \Norm{\mathsf{h}^{\frac{1 - r}{2}}\jump{v}_{\sf N}}_{L^r(\Fho)^{d}} \qquad \forall v \in \Sp(\Th),
\end{equation*}
where the hidden constant is independent of~$h$ and~$v$, but it depends on~$\Omega$. 
\end{lemma}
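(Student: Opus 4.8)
The plan is to reduce the broken trace bound to the classical trace theorem on the Lipschitz domain~$\Omega$ by subtracting from~$v$ a conforming approximation. First I would introduce the standard averaging (Oswald / Karakashian--Pascal) operator $\mathcal{E}_h\colon\Sp(\Th)\to\Sp(\Th)\cap H^1(\Omega)$, obtained by setting each Lagrange degree of freedom equal to the average of the values taken there by the restrictions of~$v$ to the elements sharing that node; this is well defined and maps into a conforming subspace of~$\Sp(\Th)$ because~$p\ge 1$. The first ingredient I need is the local approximation property of~$\mathcal{E}_h$ in terms of the jumps: for $r\in\{1,2\}$ and every~$K\in\Th$,
\begin{equation*}
\Norm{v-\mathcal{E}_h v}_{L^r(K)}^r\lesssim \sum_{\substack{F\in\Fho\\ \overline F\cap\overline K\neq\emptyset}} h_F\,\Norm{\jump{v}_{\sf N}}_{L^r(F)^d}^r,
\qquad
\Norm{\nabla_h(v-\mathcal{E}_h v)}_{L^r(K)^d}^r\lesssim \sum_{\substack{F\in\Fho\\ \overline F\cap\overline K\neq\emptyset}} h_F^{\,1-r}\,\Norm{\jump{v}_{\sf N}}_{L^r(F)^d}^r,
\end{equation*}
with constants depending only on~$p$ and~$\Upsilon$; for~$r=2$ these are classical, and for~$r=1$ (and for low degree~$p$) they follow from the very same affine-scaling / reference-simplex argument, which is precisely where~\cite[Thm.~4.4]{Buffa_Ortner_2009} is extended. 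Summing over~$K$, using the finite overlap of the element patches, $h_F\le 1$, and $h_F\simeq\mathsf{h}$ on each interior facet (up to the fixed parameters $\eta,\Upsilon$), I would record the global consequence
\begin{equation*}
\Norm{v-\mathcal{E}_h v}_{L^r(\Omega)}+\Norm{\nabla_h(v-\mathcal{E}_h v)}_{L^r(\Omega)^d}\lesssim \Norm{\mathsf{h}^{\frac{1-r}{2}}\jump{v}_{\sf N}}_{L^r(\Fho)^d}.
\end{equation*}

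Next I would split $\Norm{v}_{L^1(\partial\Omega)}\le\Norm{\mathcal{E}_h v}_{L^1(\partial\Omega)}+\Norm{v-\mathcal{E}_h v}_{L^1(\partial\Omega)}$ and treat the two terms with different tools. Since $\mathcal{E}_h v\in W^{1,r}(\Omega)$, for the conforming part I would invoke the continuous trace inequality on the bounded Lipschitz domain~$\Omega$ (bounded as a map $W^{1,r}(\Omega)\to L^1(\partial\Omega)$ for every $r\ge 1$) and then the triangle inequality together with the global estimate above, to get
\begin{equation*}
\Norm{\mathcal{E}_h v}_{L^1(\partial\Omega)}\lesssim \Norm{\mathcal{E}_h v}_{L^r(\Omega)}+\Norm{\nabla\mathcal{E}_h v}_{L^r(\Omega)^d}\lesssim \Norm{v}_{L^r(\Omega)}+\Norm{\nabla_h v}_{L^r(\Omega)^d}+\Norm{\mathsf{h}^{\frac{1-r}{2}}\jump{v}_{\sf N}}_{L^r(\Fho)^d}.
\end{equation*}

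For the remainder, $v-\mathcal{E}_h v$ is a piecewise polynomial, so on each element~$K$ touching~$\partial\Omega$ I would use the discrete ($L^r$) trace inequality $\Norm{w}_{L^r(\partial K)}^r\lesssim h_K^{-1}\Norm{w}_{L^r(K)}^r$ (again a reference-simplex scaling), sum over boundary elements, and insert the local approximation estimate; since $h_K^{-1}h_F\simeq 1$ for a facet~$F$ of~$K$ by shape regularity, this collapses to
\begin{equation*}
\Norm{v-\mathcal{E}_h v}_{L^r(\partial\Omega)}^r\lesssim \sum_{\substack{K\in\Th\\ \partial K\cap\partial\Omega\neq\emptyset}} h_K^{-1}\Norm{v-\mathcal{E}_h v}_{L^r(K)}^r\lesssim \Norm{\jump{v}_{\sf N}}_{L^r(\Fho)^d}^r.
\end{equation*}
Finally, Hölder's inequality on the bounded set~$\partial\Omega$ and the bound $\mathsf{h}\le\eta^{-1}$ give $\Norm{v-\mathcal{E}_h v}_{L^1(\partial\Omega)}\lesssim\Norm{\jump{v}_{\sf N}}_{L^r(\Fho)^d}\lesssim\Norm{\mathsf{h}^{\frac{1-r}{2}}\jump{v}_{\sf N}}_{L^r(\Fho)^d}$ (the last step being trivial for $r=1$, where the weight equals~$1$). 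Adding the conforming and remainder contributions yields the claim.

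I expect the only genuine obstacle to be the auxiliary tools in the non-Hilbertian case $r=1$ and at low polynomial degree $p\in\{1,2\}$: the stability and approximation properties of the averaging operator $\mathcal{E}_h$ and the element-wise discrete trace inequality are usually stated in the $L^2$ setting, and one has to re-derive them in $L^1$/$W^{1,1}$ — they hold for polynomials by norm equivalence on the reference simplex — while carefully tracking the powers of~$h$ so that all hidden constants depend only on~$\Omega$, $p$, $\eta$, and the shape-regularity constant~$\Upsilon$, and not on the mesh. Everything else is a routine bookkeeping exercise with the triangle and Hölder inequalities.
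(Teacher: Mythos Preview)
Your proof is correct and follows essentially the same strategy as the paper: split $v$ into a conforming reconstruction (your $\mathcal{E}_h$, the paper's $Q_h$ from \cite{Buffa_Ortner_2009}) plus a remainder, apply the continuous trace theorem to the conforming part, and control the remainder by the jumps via local approximation and discrete trace estimates. The only noteworthy difference is that the paper proves the case $r=1$ first and then obtains $r=2$ by a direct Cauchy--Schwarz upgrade of each right-hand side term (bounding $\Norm{\cdot}_{L^1}$ by $\Norm{\cdot}_{L^2}$ on $\Omega$ and on $\Fho$, the latter using $\int_{\Fho}\mathsf{h}\,\dS\lesssim|\Omega|$), which is a bit quicker than re-running the full argument with $r$-dependent exponents as you do.
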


\begin{proof}
Let~$v \in \Sp(\Th)$ and~$Q_h:\Sp(\Th)\to {\mathcal S}_1^{\sf cont}(\Th)$ 
be the reconstruction operator defined in~\cite[\S3]{Buffa_Ortner_2009}. By the triangle inequality,
\begin{equation}\label{eq:va}
\Norm{v}_{L^1(\partial \Omega)} \le \Norm{Q_h v}_{L^1(\partial \Omega)} +\Norm{v-Q_h}_{L^1(\partial \Omega)}.
\end{equation}
The trace theorem in $W^{1,1}(\Omega)$ gives
\begin{equation}\label{eq:Qva}
\Norm{Q_h v}_{L^1(\partial \Omega)} \lesssim
\Norm{Q_h v}_{L^1(\Omega)} + \Norm{\nabla Q_h v}_{L^1(\Omega)^d}.
\end{equation}
Thus, it follows from~\eqref{eq:va} and~\eqref{eq:Qva}, 
by applying the triangle inequality and~\cite[Thm.~3.1]{Buffa_Ortner_2009}, that
\begin{equation}
\label{EQN::BROKEN-TRACE-P1}
\begin{split}
\Norm{v}_{L^1(\partial \Omega)} & \lesssim
\Norm{v}_{L^1(\Omega)}+\Norm{v-Q_h v}_{L^1(\Omega)}+ \Norm{\nabla Q_h v}_{L^1(\Omega)^d}
+\Norm{v-Q_h v}_{L^1(\partial \Omega)}\\
&\lesssim \Norm{v}_{L^1(\Omega)}+
\Norm{\nabla_h v}_{L^1(\Omega)^d}
+ \Norm{\jump{v}_{\sf N}}_{L^1(\Fho)^{d}},
\end{split}
\end{equation}
which completes the proof for~$r = 1$. 

We now consider the case $r=2$.
We infer from the Cauchy--Schwarz inequality that
\begin{equation}
\label{EQN::L1-L2-v}
\Norm{v}_{L^1(\Omega)} = \sum_{K \in \Th} \int_{K} |v| \dx \leq \bigg(\sum_{K \in \Th} |K| \bigg)^{\frac12} \Norm{v}_{L^2(\Omega)} = |\Omega|^{\frac12} \Norm{v}_{L^2(\Omega)},
\end{equation}
and similarly,
\begin{equation}
\label{EQN::L1-L2-Nabla-v}
\Norm{\nabla_h v}_{L^1(\Omega)^d}
\leq |\Omega|^{\frac12} 
\Norm{\nabla_h v}_{L^2(\Omega)^d}.
\end{equation}
Moreover, by the definition of~$\mathsf{h}$ in~\eqref{EQN::DEF-h} and, for~$d=2,3$, the shape-regularity assumption, we have\footnote{
For $d=1$, $\int_{\Fho}\mathsf{h}\,\dS=\sum_{i = 1}^{M-1}\mathsf{h}(x_i) \lesssim \sum_{K \in \Th} |K|$, where $\{x_i\}_{i=0}^M$ are the meshpoints. 
\\
For $d=2,3$, $|K|=(\text{sum of facet $(d-1)$-measures})\times \varrho_K/d$, with~$\varrho_K$ being the inradius of~$K$. From the shape-regularity assumption~\eqref{EQ::SHAPEREG}, we deduce that~$|K| \geq \Upsilon h_K h_F /d$ for any facet~$F$ of~$K$, and obtain
$\int_{\Fho}\mathsf{h}\,\dS\lesssim\sum_{F \subset \Fho} \mathsf{h}_{|F} |F|\lesssim \sum_{K \in \Th} |K|$.
}
\[
\int_{\Fho}\mathsf{h}\,\dS\lesssim 
\sum_{K \in \Th} |K|,
\]
from which we deduce that
\begin{align}
\Norm{\jump{v}_{\sf N}}_{L^1(\Fho)^{d}} & = \int_{\Fho} \mathsf{h}^{\frac12} \mathsf{h}^{-\frac12} |\jump{v}_{\sf N}| \dS \leq
\bigg(\int_{\Fho}\mathsf{h}\,\dS\bigg)^{\frac12}
\Norm{\mathsf{h}^{-\frac12} \jump{v}_{\sf N}}_{L^2(\Fho)^{d}} \lesssim \bigg(\sum_{K \in \Th} |K| \bigg)^{\frac12} \Norm{\mathsf{h}^{-\frac12} \jump{v}_{\sf N}}_{L^2(\Fho)^{d}} \nonumber \\
& \lesssim \Norm{\mathsf{h}^{-\frac12} \jump{v}_{\sf N}}_{L^2(\Fho)^{d}}.
\label{EQN::L1-L2-jumps}
\end{align}
Combining the broken trace estimate for~$r = 1$ in~\eqref{EQN::BROKEN-TRACE-P1} with bounds~\eqref{EQN::L1-L2-v}, \eqref{EQN::L1-L2-Nabla-v}, and~\eqref{EQN::L1-L2-jumps}, the desired result for~$r = 2$ follows, completing the proof.
\end{proof}

We now discuss the validity of Assumption~\ref{ASSUMPTION::DG-NORM}. For this, we distinguish three cases.

\paragraph{$\bullet$ Case~$d = 1$ ($\ell=1$).} 

Choose~$c_{h_m}(\cdot, \cdot)$ and the $H^1$-type norm~$\Norm{\cdot}_{\sf DG}$ as
\begin{equation}\label{EQ::DGPROD}
c_{h_m}(w_m,v_m) := \int_\Omega w_m v_m\dx
+\int_\Omega\nabla_{\sf DG} w_m\cdot\nabla_{\sf DG} v_m\dx
+\int_{\mathcal{F}_{h_m}^\mathcal{I}}\mathsf{h}^{-1}\jump{w_m}_{\sf N}\cdot\jump{v_m}_{\sf N}\dS \qquad \forall w_m, v_m \in \Sp(\mathcal{T}_{h_m}),
\end{equation}
\begin{equation}\label{EQ::DGNORM}
\Norm{\bw_m}_{\sf DG}^2:=
\Norm{\bw_m}_{L^2(\Omega)^N}^2
+
\Norm{ \nabla_h \bw_m}_{[L^2(\Omega)^d]^N}^2
+ \bigNorm{{\mathsf{h}}^{-\frac12} \jump{\bw_m}_{\sf N}}_{[L^2(\mathcal{F}_{h_m}^\mathcal{I})^{d}]{}^N}^2 \qquad \forall \bw_m \in \Sp(\mathcal{T}_{h_m})^N.
\end{equation}

With this choice, property~\ref{A1} follows from the coercivity of the LDG discretization of the Laplace operator (see, e.g., \cite[Prop.~3.1]{Perugia_Schotzau_2002}), property~\ref{A3} follows from~\cite[Thm.~5.2 and Lemma~8]{Buffa_Ortner_2009}, and property~\ref{A2} follows from the following proposition. 

\begin{proposition}[Discrete Sobolev embedding in~1D]
Let $(a,b)$ be an interval in $\IR$, and let $\Sp(\Th)$ be defined on the partition $\Th$ given by~$a =: x_0 < x_1 < \ldots < x_M := b$. If the DG norm is chosen as in~\eqref{EQ::DGNORM} with~$N = 1$, then, for all~$v\in \Sp(\Th)$,
\[
\Norm{v}_{L^\infty(a,b)} \lesssim \Norm{v}_{\sf DG}, 
\]
where the hidden constant is independent of~$h$ and~$v$.
\end{proposition}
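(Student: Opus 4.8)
The plan is to give a direct, self-contained argument that exploits the one-dimensional structure, rather than passing through a reconstruction operator. Write $\Th=\{K_i=(x_{i-1},x_i)\}_{i=1}^{M}$, set $h_i:=|K_i|$, and for an element $K$ let $\langle v\rangle_K:=|K|^{-1}\int_K v\,\dx$ be the element average; also put $\overline v:=|\Omega|^{-1}\int_\Omega v\,\dx$. Fix $x\in\overline\Omega$, lying in $\overline{K_j}$ for some $j$ (either choice if $x$ is an interior node), and start from
\[
|v(x)|\le\bigl|v(x)-\langle v\rangle_{K_j}\bigr|+\bigl|\langle v\rangle_{K_j}-\overline v\bigr|+|\overline v|.
\]
The first and third terms are immediate: since $v|_{K_j}$ is a polynomial, for $x,y\in K_j$ one has $|v(x)-v(y)|\le\Norm{\nabla_h v}_{L^1(K_j)}$, hence $\bigl|v(x)-\langle v\rangle_{K_j}\bigr|\le\Norm{\nabla_h v}_{L^1(K_j)}\le|\Omega|^{\frac12}\Norm{\nabla_h v}_{L^2(\Omega)}$, while by Cauchy--Schwarz $|\overline v|\le|\Omega|^{-\frac12}\Norm{v}_{L^2(\Omega)}$.

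The core of the argument is a bound on how far element averages can drift: for any two elements $K_i,K_j$,
\[
\bigl|\langle v\rangle_{K_i}-\langle v\rangle_{K_j}\bigr|\lesssim\Norm{\nabla_h v}_{L^2(\Omega)}+\bigNorm{\mathsf h^{-\frac12}\jump{v}_{\sf N}}_{L^2(\Fho)},
\]
with constant depending only on $\Omega$. To prove it, telescope across the intervening nodes: for adjacent elements,
\[
\langle v\rangle_{K_k}-\langle v\rangle_{K_{k+1}}=\bigl(\langle v\rangle_{K_k}-v|_{K_k}(x_k)\bigr)+\bigl(v|_{K_k}(x_k)-v|_{K_{k+1}}(x_k)\bigr)+\bigl(v|_{K_{k+1}}(x_k)-\langle v\rangle_{K_{k+1}}\bigr),
\]
where the middle difference equals $\jump{v}_{\sf N}(x_k)$ up to sign and the outer two are bounded by $h_k^{\frac12}\Norm{\nabla_h v}_{L^2(K_k)}$ and $h_{k+1}^{\frac12}\Norm{\nabla_h v}_{L^2(K_{k+1})}$ exactly as above. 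Summing over the nodes between $K_i$ and $K_j$, each derivative term appears at most twice, so by Cauchy--Schwarz the derivative contribution is $\le 2|\Omega|^{\frac12}\Norm{\nabla_h v}_{L^2(\Omega)}$, while the jump contribution satisfies $\sum_k|\jump{v}_{\sf N}(x_k)|\le\bigl(\sum_k\mathsf h(x_k)\bigr)^{\frac12}\bigl(\sum_k\mathsf h(x_k)^{-1}|\jump{v}_{\sf N}(x_k)|^2\bigr)^{\frac12}\lesssim\bigNorm{\mathsf h^{-\frac12}\jump{v}_{\sf N}}_{L^2(\Fho)}$, using $\sum_k\mathsf h(x_k)=\eta^{-1}\sum_k\min\{h_k,h_{k+1}\}\le\eta^{-1}|\Omega|$ together with the one-dimensional convention $\int_{\Fho}g\,\dS=\sum_k g(x_k)$. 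Finally, writing $\overline v=|\Omega|^{-1}\sum_i h_i\langle v\rangle_{K_i}$ as a convex combination of element averages gives $\bigl|\langle v\rangle_{K_j}-\overline v\bigr|\le|\Omega|^{-1}\sum_i h_i\bigl|\langle v\rangle_{K_j}-\langle v\rangle_{K_i}\bigr|$, which inherits the same bound.

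Collecting the three estimates yields $|v(x)|\lesssim\Norm{v}_{L^2(\Omega)}+\Norm{\nabla_h v}_{L^2(\Omega)}+\bigNorm{\mathsf h^{-\frac12}\jump{v}_{\sf N}}_{L^2(\Fho)}\lesssim\Norm{v}_{\sf DG}$ for every $x$, and taking the supremum over $x\in\overline\Omega$ gives the claim; the hidden constant depends only on $\Omega$ (through $|\Omega|$) and $\eta$. The step I expect to require the most care is the last part of the core estimate: the bookkeeping that turns the pointwise $1$D jumps $\jump{v}_{\sf N}(x_k)$ and the weights $\mathsf h(x_k)$ into the $L^2(\Fho)$ norm entering the DG norm~\eqref{EQ::DGNORM}, together with checking that all $h$-dependent factors are harmless because $h_i\le|\Omega|$ and $\sum_i h_i=|\Omega|$; everything else is elementary. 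An alternative route, parallel to the proof of Lemma~\ref{LEMMA::BROKEN-TRACE}, would split $v=Q_hv+(v-Q_hv)$ with the Buffa--Ortner reconstruction $Q_h$ of~\cite{Buffa_Ortner_2009}, bound $\Norm{Q_hv}_{L^\infty(a,b)}\lesssim\Norm{Q_hv}_{H^1(a,b)}$ via the one-dimensional continuous Sobolev embedding, and control $\Norm{v-Q_hv}_{L^\infty}$ elementwise through an inverse inequality and the Buffa--Ortner approximation bounds; the direct argument above has the advantage of not invoking that machinery.
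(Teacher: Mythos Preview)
Your argument is correct. Both proofs hinge on the same one-dimensional telescoping idea---walk through the elements, collect $h_k^{1/2}\Norm{v'}_{L^2(K_k)}$ contributions from the derivatives and $|\jump{v}_{\sf N}(x_k)|$ contributions from the nodes, then use Cauchy--Schwarz with the weights $\mathsf h(x_k)$---so the mechanics are essentially identical. The genuine difference is the choice of anchor: the paper integrates from the left endpoint $a$ and writes $v(x)=v(a)+\text{(telescoped increments)}$, which forces it to control $|v(a)|$ via the broken trace estimate of Lemma~\ref{LEMMA::BROKEN-TRACE}; you instead anchor at the global mean $\overline v$, which is bounded directly by $|\Omega|^{-1/2}\Norm{v}_{L^2(\Omega)}$, and compare element averages to $\overline v$ by a convex-combination argument. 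Your route is slightly more self-contained because it avoids the auxiliary lemma, at the cost of the extra bookkeeping around element averages; the paper's route is a bit shorter once Lemma~\ref{LEMMA::BROKEN-TRACE} is in hand. The alternative you sketch via the Buffa--Ortner reconstruction would also work but is heavier than either direct argument.
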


\begin{proof}
Let~$K_i := (x_{i-1}, x_i)$ and~$h_i := x_i - x_{i-1}$, for~$i = 1, \ldots, M$. 
For any~$v \in \Sp(\Th)$ and~$j = 1, \ldots, M$, by the Fundamental Theorem of Calculus and the H\"older inequality, we have for all $x\in K_j$,
\begin{equation*}\label{eq:proof1D}
\begin{aligned}
v(x) &=  v(a) + \sum_{i = 1}^{j-1} \bigg(\int_{K_i} v'(x) \text{d}x + v(x_i^+) - v(x_i^-)\bigg) + \int_{x_{j-1}}^x v'(x) \text{d}x \\
&\leq  |v(a)| + \sum_{i = 1}^{M}  \Norm{v'}_{L^1(K_i)} + \sum_{i = 1}^{M-1} |v(x_i^+)-v(x_i^-)| \\
&\leq |v(a)| + \sum_{i = 1}^{M} h_i^{\frac12} \Norm{v'}_{L^2(K_i)} + \sum_{i = 1}^{M-1} \mathsf{h}^{\frac12}(x_i) \mathsf{h}^{-\frac12}(x_i)|v(x_i^+)-v(x_i^-)| \\
&\leq |v(a)| + \bigg(\sum_{i = 1}^M h_i\bigg)^\frac12 \Norm{v'}_{L^2(a, b)} + \bigg(\sum_{i = 1}^{M-1}\mathsf{h}(x_i)\bigg)^\frac12 
\bigNorm{{\mathsf{h}}^{-\frac12} \jump{v}_{\sf N}}_{L^2(\mathcal{F}_{h}^\mathcal{I})} 
 \\
&\lesssim |v(a)| + \Norm{v'}_{L^2(a,b)}+
\bigNorm{{\mathsf{h}}^{-\frac12} \jump{v}_{\sf N}}_{L^2(\mathcal{F}_{h}^\mathcal{I})} 
\lesssim |v(a)| + \Norm{v}_{\sf DG}.
\end{aligned}
\end{equation*}
Lemma~\ref{LEMMA::BROKEN-TRACE} with $r=2$ implies that~$|v(a)| \lesssim \Norm{v}_{\sf DG}$, and the proof is complete.
\end{proof}

\paragraph{$\bullet$ Case~$d = 2, 3$ and~$s'' A\in \EFC{0}{\overline{\calD}}{\IR^{N\times N}}$ ($\ell=1$).} 
In this case, the enforcement of the~$L^\infty(\Omega)$-boundedness on the discrete entropy variable~$\bw_m^{\varepsilon, n}$, which is a consequence of property~\ref{A2}, is no longer necessary, as the weak convergence in~\eqref{EQ::WEAK} follows from the boundedness of~$\calD$ and the continuity of~$s'' A$ on~$\overline{\calD}$.
Moreover, for the bilinear form~$c_{h_m}(\cdot, \cdot)$ and the norm~$\Norm{\cdot}_{\sf DG}$ defined in~\eqref{EQ::DGPROD} and~\eqref{EQ::DGNORM}, respectively, properties~\ref{A1} and~\ref{A3} follow from the same results as in the case~$d = 1$. We present some examples for cross-diffusion systems satisfying $s'' A\in \EFC{0}{\overline{\calD}}{\IR^{N\times N}}$ in Appendix \ref{app}.

\paragraph{$\bullet$ Case~$d = 2, 3$ and~$\ell = 2$.}
We define the discrete LDG Hessian operator~$\calH_{\sf DG}: \Sp(\mathcal{T}_{h_m}) \rightarrow L^2(\Omega)^{d \times d}$ as
\begin{equation*}
\int_{\Omega} \calH_{\sf DG} \lambda_m : \Theta_m \dx = 
\int_\Omega \left(D_h^2\lambda_m - \calR(\lambda_m) + \calB(\lambda_m)\right) : \Theta_m \dx \qquad \forall \Theta_m \in \prod_{K \in \mathcal{T}_{h_m}} \Pp{p}{K}^{d \times d},
\end{equation*}
where~$D_h^2$ denotes the elementwise Hessian operator, and the lifting operators~$\calR: \Sp(\mathcal{T}_{h_m}) \rightarrow L^2(\Omega)^{d\times d}$ and~$\calB : \Sp(\mathcal{T}_{h_m}) \rightarrow L^2(\Omega)^{d\times d}$ are defined by
\begin{align*}
\int_{\Omega} \calR(\lambda_m) : \Theta_m \dx & = \sum_{K \in \mathcal{T}_{h_m}} \int_{(\partial K)^\circ} \mvl{\Theta_m} \bunderline{n}_K \cdot \nabla \lambda_m \dS &  \quad \forall \Theta_m \in \prod_{K \in \mathcal{T}_{h_m}} \Pp{p}{K}^{d \times d}, \\
\int_{\Omega} \calB(\lambda_m) : \Theta_m \dx & = \int_{\mathcal{F}_{h_m}^\mathcal{I}} \mvl{\nabla_h \cdot \Theta_m} \cdot \jump{\lambda_m}_{\sf N} \dS & \quad \forall \Theta_m \in \prod_{K \in \mathcal{T}_{h_m}} \Pp{p}{K}^{d \times d}.
\end{align*}
For piecewise smooth functions~$\bunderline{w}$, $\bw$, and $\bunderline{\bw}$ with~$d$, $N$, and~$N \times d$ 
components, respectively, we define the (vector-valued) total jump on each facet~$F = \partial K_1 \cap \partial K_2 \in\Fho$, for some~$K_1,K_2\in\Th$, with a prescribed unit normal vector, 
say, pointing from~$K_1$ to~$K_2$, as
\[
\jump{\bunderline{w}}:=\bunderline{w}_{|_{K_1}}-\bunderline{w}_{|_{K_2}},\qquad
\jump{\bw}:=\bw_{|_{K_1}}-\bw_{|_{K_2}},\qquad
\jump{\bunderline{\bw}}:=\bunderline{\bw}_{|_{K_1}}-\bunderline{\bw}_{|_{K_2}}.
\]
Finally, we choose~$c_{h_m}(\cdot, \cdot)$ and the $H^2$-type norm~$\Norm{\cdot}_{\sf DG}$ as
\begin{align}
\nonumber
c_{h_m}(w_m, v_m) := & \int_{\Omega} w_m v_m \dx 
+ \int_\Omega \nabla_{\sf DG} w_m\cdot\nabla_{\sf DG} v_m\dx
+ \int_{\Omega} \calH_{\sf DG}w_m : \calH_{\sf DG} v_m \dx  \\
\label{EQN::DGHess}
& + \int_{\mathcal{F}_{h_m}^\mathcal{I}} \mathsf{h}^{-1} \jump{\nabla_h w_m} \cdot \jump{\nabla_h v_m}\dS + \int_{\mathcal{F}_{h_m}^\mathcal{I}} \mathsf{h}^{-3} \jump{w_m}_{\sf N} \cdot \jump{v_m}_{\sf N} \dS &  \forall w_m, v_m \in \Sp(\mathcal{T}_{h_m}), \\
\nonumber
\Norm{\bw_m}_{\sf DG}^2 := & \Norm{\bw_m}_{L^2(\Omega)^N}^2 
+
\Norm{ \nabla_h \bw_m}_{[L^2(\Omega)^d]^N}^2
+ 
\Norm{D_h^2 \bw_m}_{[L^2(\Omega)^{d\times d}]^N}^2
\\
\label{EQN::DG2NORM}
& + \Norm{\mathsf{h}^{-\frac12} \jump{\nabla_h \bw_m}}_{[L^2(\mathcal{F}_{h_m}^\mathcal{I})^d]^N}^2 + \Norm{\mathsf{h}^{-\frac32} \jump{\bw_m}_{\sf N}}_{[L^2(\mathcal{F}_{h_m}^\mathcal{I})^d]^N}^2
& \forall \bw_m \in \Sp(\mathcal{T}_{h_m})^N.
\end{align}

Then, property~\ref{A1} follows from~\cite[Lemma~2.6]{Bonito_Guignard_Nochetto_Yang_2023}. 
The discrete compactness argument in Assumption~\ref{ASSUMPTION::DG-NORM}, \ref{A3}, can be proven similarly as in~\cite[Lemma~2.2]{Bonito_Guignard_Nochetto_Yang_2023} (see also~\cite[Appendix C]{Bonito_Guignard_Nochetto_Yang_2023}), 
whereas~\eqref{EQN::CONVERGENCE-Ch} follows from~\cite[Lemmas~2.4 and~2.5]{Bonito_Guignard_Nochetto_Yang_2023} and from the second estimate in Step 2 of the proof of ~\cite[Lemma~2.5]{Bonito_Guignard_Nochetto_Yang_2023}. 
For~$d=2$, Property~\ref{A2} is proven in the following proposition.\footnote{For~$d=3$, one could develop a similar proof based on the Fundamental Theorem of Calculus, provided that terms with third-order derivatives are added to the regularization form and to the ${\sf DG}$ norm. However, comparing with standard Sobolev embeddings, one expects the discrete Sobolev embedding to be valid also in 3D with definitions~\eqref{EQN::DGHess}--\eqref{EQN::DG2NORM}. This issue remains open.}

\begin{proposition}[Discrete Sobolev embedding in~2D]
Let~$\Omega \subset \IR^2$ be an open, bounded polytopic domain, and let the DG norm be defined as in~\eqref{EQN::DG2NORM} with~$N = 1$. Then, for all~$v \in \Sp(\Th)$,
$$\Norm{v}_{L^\infty(\Omega)} \lesssim \Norm{v}_{\sf DG},$$
where the hidden constant is independent of~$h$ and~$v$.
\end{proposition}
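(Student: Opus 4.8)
The plan is to transfer the claimed inequality from the continuous Sobolev embedding $H^2(\Omega) \hookrightarrow L^\infty(\Omega)$ — valid here since $d = 2$ and $\Omega$ is bounded and polytopic — to the discrete setting by means of an enrichment (smoothing) operator, which is exactly the device that underlies properties~\ref{A1} and~\ref{A3} in the present case $d = 2,3$, $\ell = 2$. Namely, there is an operator $E_h\colon \Sp(\Th) \to V_h^{\sf c}$, with $V_h^{\sf c} \subset H^2(\Omega)$ a suitable $H^2$-conforming finite element space (Argyris- or Hsieh--Clough--Tocher-type, possibly on a fixed sub-refinement of $\Th$), such that for every $v \in \Sp(\Th)$, every $K \in \Th$ and every $m \in \{0,1,2\}$,
\begin{equation}
\label{EQN::ENRICHMENT-LOCAL}
h_K^{2(m-2)}\Norm{v - E_h v}_{H^m(K)}^2 \lesssim \sum_{\substack{F \in \Fho\\ F \cap \overline{K} \neq \emptyset}} \left( \bigNorm{\mathsf{h}^{-\frac32}\jump{v}_{\sf N}}_{L^2(F)^d}^2 + \bigNorm{\mathsf{h}^{-\frac12}\jump{\nabla_h v}}_{L^2(F)^d}^2 \right),
\end{equation}
with hidden constant independent of $h$ and $v$; see \cite[Lemma~2.6]{Bonito_Guignard_Nochetto_Yang_2023} and the references therein. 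Summing~\eqref{EQN::ENRICHMENT-LOCAL} with $m = 0,1,2$ over all $K \in \Th$, using the finite overlap of the facet patches, and noting that the two facet quantities on the right-hand side of~\eqref{EQN::ENRICHMENT-LOCAL} are precisely among the summands of $\Norm{v}_{\sf DG}^2$ in~\eqref{EQN::DG2NORM}, one obtains $\Norm{v - E_h v}_{L^2(\Omega)}^2 + \Norm{\nabla_h(v - E_h v)}_{L^2(\Omega)^d}^2 + \Norm{D_h^2(v - E_h v)}_{L^2(\Omega)^{d\times d}}^2 \lesssim \Norm{v}_{\sf DG}^2$. Since also $\Norm{v}_{L^2(\Omega)}^2 + \Norm{\nabla_h v}_{L^2(\Omega)^d}^2 + \Norm{D_h^2 v}_{L^2(\Omega)^{d\times d}}^2 \le \Norm{v}_{\sf DG}^2$ by definition of the DG norm, the triangle inequality together with $E_h v \in H^2(\Omega)$ gives
\begin{equation}
\label{EQN::ENRICHMENT-H2}
\Norm{E_h v}_{H^2(\Omega)} \lesssim \Norm{v}_{\sf DG}.
\end{equation}

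With this at hand, I would split
\begin{equation*}
\Norm{v}_{L^\infty(\Omega)} \le \max_{K \in \Th}\Norm{v - E_h v}_{L^\infty(K)} + \Norm{E_h v}_{L^\infty(\Omega)},
\end{equation*}
and bound the two terms separately. For the second one, the continuous embedding $H^2(\Omega) \hookrightarrow L^\infty(\Omega)$ (with constant depending only on $\Omega$) and~\eqref{EQN::ENRICHMENT-H2} give $\Norm{E_h v}_{L^\infty(\Omega)} \lesssim \Norm{E_h v}_{H^2(\Omega)} \lesssim \Norm{v}_{\sf DG}$. For the first one, fix $K \in \Th$: since $(v - E_h v)|_K$ is a piecewise polynomial of bounded degree on a fixed shape-regular (sub-)triangulation of $K$, the two-dimensional inverse inequality yields $\Norm{v - E_h v}_{L^\infty(K)} \lesssim h_K^{-1}\Norm{v - E_h v}_{L^2(K)}$; combining this with the case $m = 0$ of~\eqref{EQN::ENRICHMENT-LOCAL}, which gives $\Norm{v - E_h v}_{L^2(K)} \lesssim h_K^2\Norm{v}_{\sf DG}$ (the facet sum on its right-hand side being bounded by $\Norm{v}_{\sf DG}^2$), one gets $\Norm{v - E_h v}_{L^\infty(K)} \lesssim h_K\Norm{v}_{\sf DG} \le \Norm{v}_{\sf DG}$, because $h_K \le h_m \le 1$. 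Taking the maximum over $K$ and adding the two estimates would complete the proof.

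The only delicate point is the bookkeeping of the powers of $\mathsf{h}$ and $h_K$: one must verify that (i) the facet terms produced by the enrichment estimate~\eqref{EQN::ENRICHMENT-LOCAL} match exactly the $\mathsf{h}^{-3/2}$- and $\mathsf{h}^{-1/2}$-weighted jump terms kept in $\Norm{\cdot}_{\sf DG}$ — which is precisely the reason these sharp weights are built into~\eqref{EQN::DG2NORM} — and (ii) after applying the inverse inequality to $(v - E_h v)|_K$ the leftover power of $h_K$ is nonnegative, so that it can be discarded using $h_m \le 1$. One should also record that the enrichment operator $E_h$ with the stated local bounds is available for the polynomial degree $p$ at hand (it may map into a $C^1$ macroelement space on a sub-refinement of the mesh); this is the same ingredient already used for properties~\ref{A1} and~\ref{A3} in this case, so no new hypothesis on $p$ is required.
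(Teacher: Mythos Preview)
Your argument is correct and takes a genuinely different route from the paper's own proof. The paper proceeds by a direct, constructive pointwise argument: for each interior point $(\overline{x},\overline{y})$ of an element, it builds an auxiliary ``southwest'' subdomain $\widetilde{\Omega}$, integrates the mixed derivative $\partial_{xy}v$ over $\widetilde{\Omega}$, and after integration by parts expresses $v(\overline{x},\overline{y})$ as a sum of five terms (a volume integral of $\partial_{xy}v$, facet integrals of jumps of $\partial_y v$, boundary traces, and point-value jumps along the vertical line $x=\overline{x}$), each of which is bounded individually by $\Norm{v}_{\sf DG}$ via the broken trace estimate of Lemma~\ref{LEMMA::BROKEN-TRACE} and inverse trace inequalities. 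Your approach instead leverages an $H^2$-conforming enrichment operator $E_h$ together with the continuous embedding $H^2(\Omega)\hookrightarrow L^\infty(\Omega)$: the conforming part $E_h v$ is controlled in $L^\infty$ via its $H^2$ norm, and the remainder $v-E_hv$ is controlled in $L^\infty$ elementwise by the inverse inequality and the local approximation bound~\eqref{EQN::ENRICHMENT-LOCAL}. What you gain is modularity and brevity: once the enrichment estimates are in hand (and, as you note, they are the same device behind properties~\ref{A1} and~\ref{A3}), the proof is a two-line splitting. What the paper's approach buys is self-containedness---it does not invoke the existence of a $C^1$ macroelement space or the machinery of~\cite{Bonito_Guignard_Nochetto_Yang_2023}, only elementary integration by parts and the broken trace lemma already proved in the text---at the cost of a noticeably longer and more case-laden argument.
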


\begin{proof}
Let~$v \in \Sp(\Th)$ and~$(\ox, \oy)$ be an interior point of some element~$K \in \Th$. If~$\Omega$ is convex, we define an auxiliary domain~$\tOmega := [(-\infty, \ox) \times (-\infty, \oy)] \cap \Omega$, and an auxiliary mesh~$\tTh$ given by the ``intersection" of~$\Th$ and~$\tOmega$. We illustrate these definitions in Figure~\ref{FIG::OMEGA}.
If~$\Omega$ is not convex, let~$(\ox,y_{\partial\Omega})$ be the intersection of the half-line~$(-\infty, \ox)$ with~$\partial\Omega$ having the largest~$y$-coordinate, and~$(x_{\partial\Omega},\oy)$ be the intersection of the half-line~$(-\infty, \oy)$ with~$\partial\Omega$ having the largest~$x$-coordinate.   
We let~$\Gamma_x$ and $\Gamma_y$ be
the segments with endpoints $(\ox,\oy)$ and~$(\ox,y_{\partial\Omega})$ and~$(x_{\partial\Omega},\oy)$, respectively. Then, we define~$\tOmega$ as the connected subregion of $\Omega$ delimited by $\Gamma_x$, $\Gamma_y$
on the side where the angle between~$\Gamma_x$ and $\Gamma_y$ equals~$\pi/2$. 
\begin{figure}[H]
\centering
\includegraphics[width = 0.45\textwidth, trim={2cm 2cm 2cm 2cm},clip]{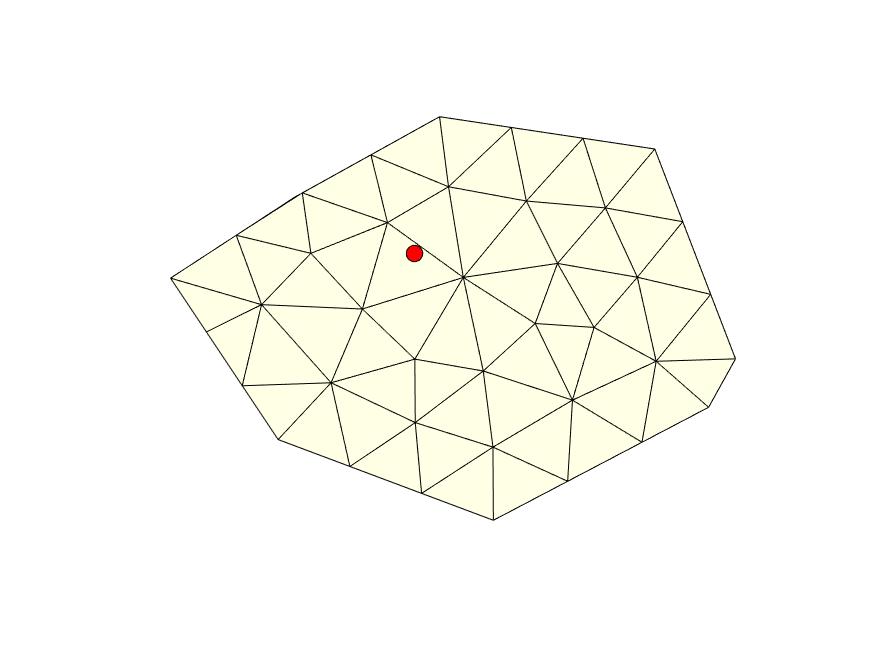}
\includegraphics[width = 0.45\textwidth, trim={2cm 2cm 2cm 2cm},clip]{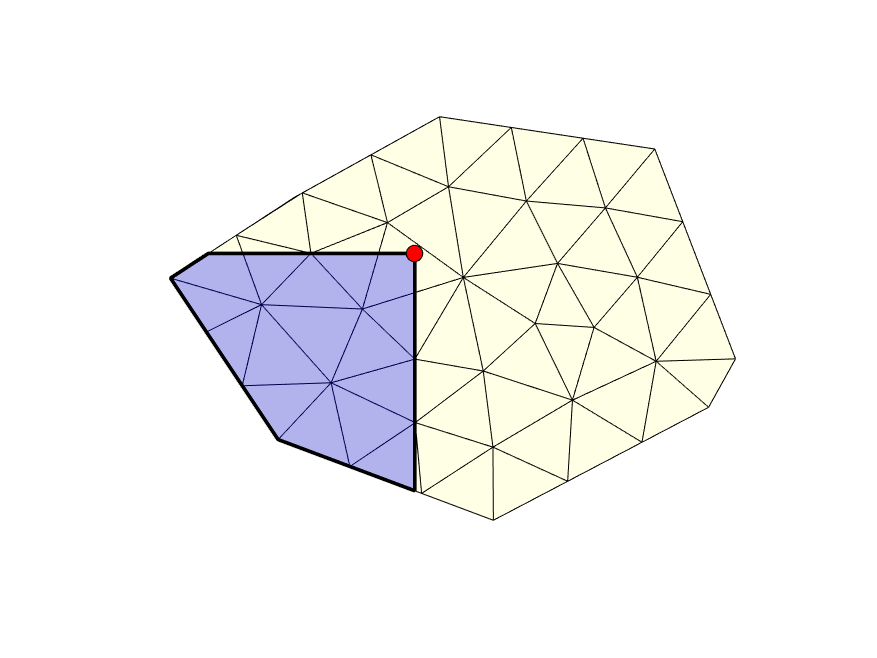}
\caption{Example of a two dimensional domain~$\Omega$ (in \textcolor{DarkYellow}{yellow}). 
\textbf{Left panel}: Triangular mesh~$\Th$ of~$\Omega$ and an interior point~$(\ox, \oy)$ (depicted with a \textcolor{red}{red} dot) of some element~$K\in \Th$. 
\textbf{Right panel}: Auxiliary domain~$\tOmega$ (in \textcolor{blue}{blue}) and auxiliary mesh~$\tTh$.}
\label{FIG::OMEGA}
\end{figure}
Integration by parts with respect to~$x$ gives
\begin{equation*}
\label{EQN::INT-BY-PARTS-X}
\sum_{\tK \in \tTh} \int_{\tK} \pxy v \,\dx = \int_{\tFho} \jumpx{\partial_{y, h} v} \dS + \int_{\tFhN} \partial_{y, h} v \,n_{\tOmega}^x \dS,
\end{equation*}
where~$\jumpx{\cdot}$ denotes the first component of the normal jump~$\jump{\cdot}_{\sf N}$, $\partial_{y, h}$ the elementwise partial $y$-derivative, and $n_{\tOmega}^x$ the first component of the unit normal vector pointing outside~$\tOmega$. The boundary of~$\tOmega$ can be split into three parts as~$\partial \tOmega = (\partial \Omega \cap \partial \tOmega) \cup \partial \tOmega^{\ox} \cup \partial \tOmega^{\oy}$, where~$\tOmega^{\ox}$ and~$\tOmega^{\oy}$ are the parts of~$\partial \tOmega$ along the lines ~$x=\ox$ and~$y=\oy$, respectively. Observe that 
$$n_{\tOmega}^x = \begin{cases} 
0 & \text{ on } \partial \tOmega ^{\oy},\\
1 & \text{ on } \partial \tOmega^{\ox},\\
n_{\Omega}^x & \text{ on } \partial \Omega \cap \partial \tOmega,
\end{cases}
$$
whence, 
\begin{equation*}
\sum_{\tK \in \tTh} \int_{\tK} \pxy v\,\dx = \int_{\tFho} \jumpx{\partial_{y, h} v} \dS + \int_{\tFhN \cap \partial \Omega} \partial_{y, h} v \,n_{\Omega}^x \,\dS +  \int_{\tFhN \cap \partial \tOmega^{\ox}} \partial_{y, h} v \,\dS.
\end{equation*}
We now focus on the last term of the previous identity. Let~$\{(\ox, y_j)\}_{j = 1}^{\ell}$, with~$\ell \in \IN$, be the set containing all internal vertices of~$\Th$ that lie on~$\partial\tOmega^{\ox}$, as well as all intersections between~$\partial\tOmega^{\ox}$ and those edges in~$\Fho$ that do not lie along~$\partial\tOmega^{\ox}$. 
We assume that the points in~$\{(\ox, y_j)\}_{j = 1}^{\ell}$ are ordered with decreasing $y$-coordinate. 
Furthermore, we denote by~$(\ox, y^{\partial})$ the intersection between~$\partial\tOmega^{\ox}$ and~$\partial\Omega$.
In Figure~\ref{FIG::VERTICES}, we illustrate the notation used for the vertices of~$\tTh$ lying on~$\partial \tOmega^{\ox}$.\footnote{The boundary~$\partial \tOmega^{\ox}$ crosses a vertex of~$\Th$ (green dot in the middle) and an internal edge of~$\Th$ (between the  two green dots from the bottom up). This is not an issue, as the domain~$\tOmega$ sees~$\partial \tOmega^{\ox}$ only from the interior.}
\begin{figure}[H]
\centering
\includegraphics[width = 0.6\textwidth]{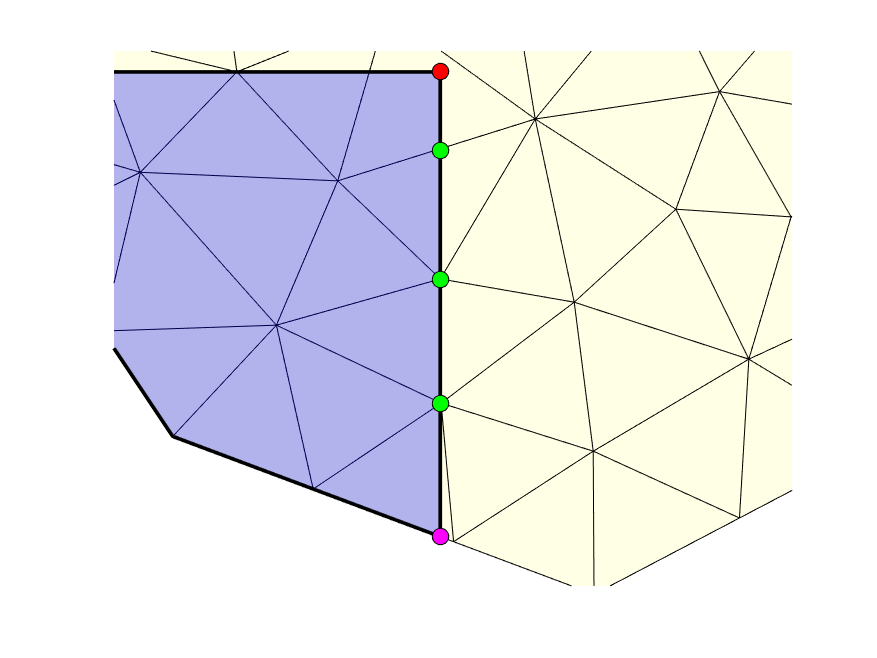}
\caption{Example of vertices of~$\tTh$ lying along~$\partial \tOmega^{\ox}$. The \textcolor{red}{red} dot has the coordinates~$(\ox, \oy)$;
the \textcolor{DarkGreen}{green} dots have the coordinates~$\{(\ox, y_j)\}_{j = 1}^{\ell}$ for some~$\ell \in \IN$;
the \textcolor{purple}{purple} dot belongs to~$\partial \Omega$ and has the coordinates~$(\ox, y^{\partial})$.
\label{FIG::VERTICES}}
\end{figure}
By the Fundamental Theorem of Calculus in one dimension,
we have
$$\int_{\tFhN \cap \partial \tOmega^{\ox}} \partial_{y, h} v \,\dS = v(\ox, \oy) - \sum_{j = 1}^{\ell} \jump{v(\ox, y_j)} - v(\ox, y^{\partial}),$$
where $\jump{v(\ox, y_j)}:=\lim_{\varepsilon\to 0}\abs{v(\ox, y_j+\varepsilon)-v(\ox, y_j-\varepsilon)}$.
Therefore,
\begin{equation*}
\begin{split}
v(\ox, \oy) &= \sum_{\tK \in \tTh} \int_{\tK} \pxy v \,\dx - \int_{\tFho} \jumpx{\partial_{y, h} v}\dS - \int_{\tFhN \cap \partial \Omega} \partial_{y, h} v \,n_{\Omega}^x\,\dS + \sum_{j = 1}^{\ell} \jump{v(\ox, y_j)} + v(\ox, y^{\partial}) \\
& = J_1 + J_2 + J_3 + J_4 + J_5.
\end{split}
\end{equation*}
We estimate the terms~$J_i$, $i = 1, \ldots, 5$, separately.
\paragraph{Bound on~$J_1$:} Proceeding as in~\eqref{EQN::L1-L2-v}, we find that
\begin{align*}
J_1 & \leq |\Omega|^{\frac12} \bigg(\sum_{K \in \Th} \Norm{\pxy v}_{L^2(K)}^2\bigg)^{\frac12} \lesssim \Norm{v}_{\sf DG}.
\end{align*}
\paragraph{Bound on~$J_2$:} Since~$|n_x| \leq \sqrt{n_x^2 + n_y^2} \leq 1$, 
proceeding as in~\eqref{EQN::L1-L2-jumps} gives
\begin{align*}
J_2 \leq \int_{\tFho} |\jumpx{\partial_{y,h} v}| \dS \leq \int_{\Fho} |\jump{\partial_{y, h} v}| \dS  \lesssim  \Norm{\mathsf{h}^{-\frac12} \jump{\partial_{y, h}v}}_{L^2(\Fho)}
\lesssim \Norm{\mathsf{h}^{-\frac12} \jump{\nabla_h v}}_{L^2(\Fho)^d}
\lesssim \Norm{v}_{\sf DG}.
\end{align*}

\paragraph{Bound on~$J_3$:} The broken trace estimate in Lemma~\ref{LEMMA::BROKEN-TRACE}
for~$r = 2$ implies that
\begin{align*}
J_3 \leq \Norm{\partial_{y, h}v }_{L^1(\partial \Omega)} \lesssim 
\Norm{\partial_{y,h} v}_{L^2(K)}
+ \Norm{\nabla_h (\partial_{y,h} v)}_{L^2(K)^{d}}
+ \Norm{\mathsf{h}^{-\frac12} \jump{\partial_{y, h} v}}_{L^2(\Fho)} \lesssim \Norm{v}_{\sf DG}.
\end{align*}

\paragraph{Bound on~$J_4$:} The \textcolor{DarkGreen}{green} dots in Figure~\ref{FIG::VERTICES} with coordinates~$\{(\ox, y_j)\}_{j = 1}^{\ell}$ may be either: \emph{i)} an internal point of some edge~$e \in \Fho$, or \emph{ii)} a vertex of~$\Th$. Both situations are represented in Figure~\ref{FIG::JUMPS}.

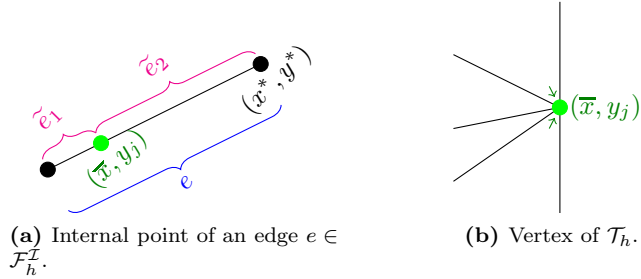
\begin{figure}[H]
\centering
\subfloat[Internal point of an edge~$e \in \Fho$.]{
\begin{tikzpicture}[scale = 1.4]
\draw[black] (0, 0) -- (2, 1);
\filldraw[black] (0, 0) circle (2pt);
\filldraw[black] (2, 1) circle (2pt);
\filldraw[green] (0.5, 0.25) circle (2pt);
\draw (0.5, 0.25) node[DarkGreen, below, rotate = 45] {$(\ox, y_j)$}; 
\draw (2, 1) node[black, below, rotate = 45] {$(x^*, y^*)$}; 
\draw [magenta, decorate,decoration={brace,amplitude=5pt,raise=1ex}]
  (0, 0) -- (0.5, 0.25) node[midway,xshift = -1em, yshift=1.5em, rotate = 45]{$\widetilde{e}_1$};
\draw [magenta, decorate,decoration={brace,amplitude=5pt,raise=1ex}]
  (0.5, 0.25) -- (2, 1) node[midway,xshift = -1em, yshift=1.5em, rotate = 45]{$\widetilde{e}_2$};
\draw [blue, decorate,decoration={brace,amplitude=5pt,mirror, raise=4ex}]
  (0,0) -- (2,1) node[midway,xshift = 1.2em, yshift=-2.5em, rotate = 45]{$e$};
\end{tikzpicture}
}
\hspace{0.5in}
\subfloat[Vertex of~$\Th$.]{
\begin{tikzpicture}[scale = 1.4]
\draw (0, 1) node[DarkGreen, right] {$(\ox, y_j)$}; 
\draw[black] (0, 0) -- (0, 2);
\draw[black] (0, 1) -- (-1, 1.5);
\draw[black] (0, 1) -- (-1, 0.8);
\draw[black] (0, 1) -- (-1, 0.3);
\filldraw[green] (0, 1) circle (2pt);
\draw[->, DarkGreen] (-0.1, 1.2) -- (-0.05, 1.1);
\draw[->, DarkGreen] (-0.1, 0.8) -- (-0.05, 0.9);
\end{tikzpicture}
}
\caption{
 Illustration of the two types of points in the set~$\{(\ox, y_j)\}_{j = 1}^{\ell}$ used in the bound on~$J_4$.
\label{FIG::JUMPS}
}
\end{figure}
\noindent We consider each case separately. 

\textbf{Case \emph{i)}} Let~$e \subset \Fho$ be the edge containing~$(\ox, y_j)$, and let~$\widetilde{e}_{\max}$ be the largest segment of~$e$ having~$(\ox, y_j)$ as a vertex. Let $(x^*,y^*)$ be the remaining vertex.
We deduce from~$h_{\widetilde{e}^{\max}} \geq \frac12 h_e$ that~$h_{\widetilde{e}^{\max}}^{-1} \leq 2 h_e^{-1}$. 
Set~$\Phi(t) := \jump{v(\ox + t(x^* - \ox), y_j +  t(y^* - y_j))}$. Since~$\Phi \in \Pp{p}{(0, 1)}$, the inverse trace inequality $\Norm{v}_{L^1(\partial D)} \lesssim h_D^{-1} \Norm{v}_{L^1(D)}$ shows that
\begin{equation*}
|\Phi(0)|  = |\jump{v(\ox, y_j)}| \lesssim h_{\widetilde{e}_{\max}}^{-1} \Norm{\jump{v}}_{L^1(\widetilde{e}_{\max})} \lesssim  \Norm{h_e^{-1} \jump{v}}_{L^1(e)}.
\end{equation*}

\textbf{Case \emph{ii)}} Adding and subtracting the values of~$v$ at~$(\ox, y_j)$ from all the elements having~$(\ox, y_j)$ as a vertex, and using the triangle inequality, one can proceed as in case~\emph{i)}.

\textbf{Conclusion of the bound on~$J_4$:} Since the jumps at different points in~$\{(\ox, y_j)\}_{j = 1}^{\ell}$
are ``lifted" to different edges, proceeding as in~\eqref{EQN::L1-L2-jumps}, we conclude that
$$J_4=\sum_{j = 1}^{\ell} \jump{v(\ox, y_j)} 
\leq \sum_{j = 1}^{\ell} |\jump{v(\ox, y_j)}| 
\lesssim \Norm{\mathsf{h^{-1}}\jump{v}}_{L^1(\Fho)} 
\lesssim \Norm{\mathsf{h^{-\frac32}}\jump{v}}_{L^2(\Fho)}
= \Norm{\mathsf{h}^{-\frac32} \jump{v}_{\sf N}}_{L^2(\Fho)^d} \lesssim \Norm{v}_{\sf DG}. $$

\paragraph{Bound on~$J_5$:} Let~$(\hat{x}, \hat{y})$ be a vertex of $\Omega$ such that the segment~$\tGamma := [(\hat{x}, \hat{y}), (\ox, y^{\partial})] \subset \partial \tOmega\cap \partial\Omega$ has positive~$1$-dimensional measure\footnote{The argument used to bound~$J_5$ is independent of whether~$(\ox, y^{\partial})$ is a mesh vertex or not.},
and let~$\{(\hat{x}_i, \hat{y}_i)\}_{i = 1}^{k}$, with~$k \in \IN$, be the vertices of~$\Th$ in the interior of~$\tGamma$. Then, by the Fundamental Theorem of Calculus,
\begin{equation}
\label{EQN::AUX-V-XYp}
|v(\ox, y^{\partial})| \leq |v(\hat{x}, \hat{y})| + \Norm{\nabla_{\tau, h} v}_{L^1(\tGamma)} + \sum_{i = 1}^{k} |\jump{v(\hat{x}_i, \hat{y}_i)}|,
\end{equation}
where~$\nabla_{\tau, h}$ denotes the broken tangential derivative of~$v$.
Furthermore, applying  Lemma~\ref{LEMMA::BROKEN-TRACE} with~$r = 1$ along the side~$\Gamma$ of the boundary of~$\Omega$
containing~$\tGamma$, we obtain
$$|v(\hat{x}, \hat{y})|
\lesssim \Norm{v}_{L^1(\Gamma)} + \Norm{\nabla_{\tau, h} v}_{L^1(\Gamma)} + \sum_{i = 1}^{M} |\jump{v(\hat{x}_i, \hat{y}_i)}| \leq \Norm{v}_{L^1(\partial \Omega)} + \Norm{\nabla_h v}_{L^1(\partial \Omega)^d} + \sum_{i = 1}^{M} |\jump{v(\hat{x}_i, \hat{y}_i)}|,$$
where~$\{(\hat{x}_i, \hat{y}_i)\}_{i = 1}^{M}$, with~$M\ge k$, are the vertices of~$\Th$ in the interior of~$\Gamma$.
This, combined with~\eqref{EQN::AUX-V-XYp}, leads to
$$|v(\ox, y^{\partial})| \lesssim \Norm{v}_{L^1(\partial \Omega)} + \Norm{\nabla_h v}_{L^1(\partial \Omega)^d} + \sum_{i = 1}^{M} |\jump{v(\hat{x}_i, \hat{y}_i)}| =: I_1 + I_2 + I_3.$$
Using Lemma~\ref{LEMMA::BROKEN-TRACE} with~$r = 2$, the terms~$I_1$ and~$I_2$ can be estimated as follows:
\begin{subequations}
\begin{align}
\label{EQN::I_1-Triang}
I_1 & \lesssim \Norm{v}_{L^2(\Omega)} + 
\Norm{\nabla_h v}_{L^2(\Omega)^{d}}
+ \Norm{\mathsf{h}^{-\frac12}\jump{v}_{\sf N}}_{L^2(\Fho)^d} \lesssim \Norm{v}_{\sf DG}, \\
\label{EQN::I_2-Triang}
I_2 & \lesssim 
\Norm{\nabla_h v}_{L^2(K)^d}
+ \Norm{D_h^2 v}_{L^2(K)^{d\times d}}
+ \Norm{\mathsf{h}^{-\frac12} \jump{\nabla_h v}}_{L^2(\Fho)^d} \lesssim \Norm{v}_{\sf DG}.
\end{align}
Moreover, proceeding as for bound~$J_4$, case~\emph{ii)}, the term~$I_3$ can be estimated as
\begin{equation}
\label{EQN::I_3-Triang}
I_3 = \sum_{i = 1}^{M} |\jump{v(\hat{x}_i, \hat{y}_i)}| \lesssim \Norm{\mathsf{h}^{-\frac32} \jump{v}_{\sf N}}_{L^2(\Fho)^d} \lesssim \Norm{v}_{\sf DG}.
\end{equation}
\end{subequations}
It follows from~\eqref{EQN::I_1-Triang}, \eqref{EQN::I_2-Triang}, and~\eqref{EQN::I_3-Triang} that $J_5 \lesssim \Norm{v}_{\sf DG}$,
which completes the proof.

\begin{figure}[H]
    \centering
    \includegraphics[width = 0.45\textwidth, trim={0.5cm 3cm 0.5cm 0cm}]{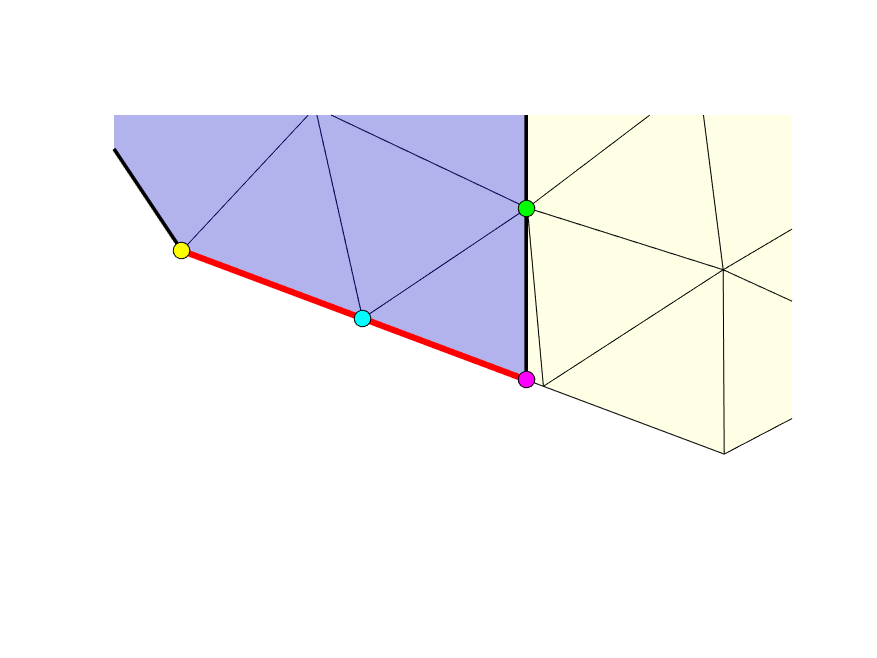} 
    \hspace{0.05in}
    \includegraphics[width = 0.45\textwidth, trim={0.5cm 3cm 0.5cm 0cm}]{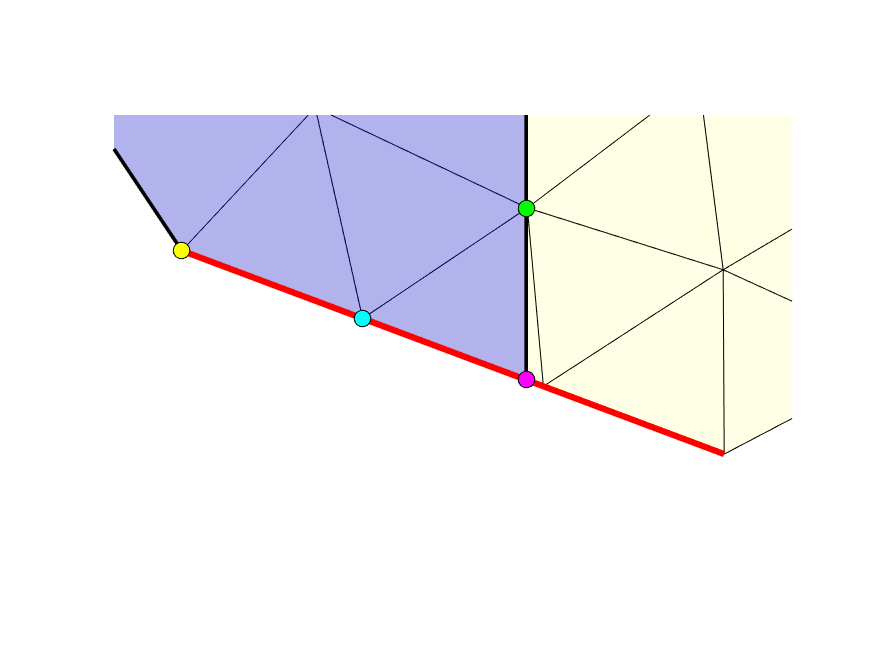} 
\caption{Example of the auxiliary segments $\tGamma$ (\textbf{left panel}) and~$\Gamma$ (\textbf{right panel}) in the bound on~$J_5$. The \textcolor{DarkYellow}{yellow} dot has the coordinates~$(\hat{x}, \hat{y})$ and is a vertex of~$\Omega$. The \textcolor{DarkBlue}{cyan} dot has the coordinates~$(\hat{x}_1, \hat{x}_2)$}.
\end{figure}
\end{proof}

\section{Numerical experiments \label{SECT::NUMERICAL-EXP}}
In this section, we assess the accuracy and entropy stability of the proposed method with some one- and two-dimensional test problems.
The solutions to the nonlinear systems of equations stemming from the fully discrete method~\eqref{EQN::FULLY-DISCRETE-SCHEME-INIT}--\eqref{EQN::FULLY-DISCRETE-SCHEME} are approximated using a quasi-Newton method, where the Jacobian of the nonlinear vector-valued function is evaluated on the approximation at the previous time. 
The tolerance~$(tol)$ and the maximum number of linear iterations~$(s_{\max})$ of the nonlinear solver are specified in each test.

We use Gaussian elimination~(for the one-dimensional problems) or a preconditioned BICG method (for the two-dimensional problems) to solve the linear system at each iteration of the nonlinear solver. 
In order to reduce the stencil of the gradient operator matrix~$B$, we use directional numerical fluxes. 
More precisely, for all~$F \in \Fho$, we set~$\alpha_F = 1$ and take~$\vn_F = 1$ in 1D, or~$\vn_F = \vnK$ in 2D, where~$K \in \Th$ is the element such that~$F \subset \partial K$ and~$(1,1)^{\top} \cdot \vnK \le 0$. These choices of~$\alpha_F$ and~$\vn_F$ yield a compact stencil in 1D and for structured simplicial meshes in 2D; see~\cite{Castillo_2010}.

\subsection{One-dimensional porous medium equation \label{SUBSECT::POROUS-MEDIUM}}
Given a real number~$m > 1$, an initial datum~$\rho_0 : \Omega \rightarrow \calD$, and a Neumann boundary datum~$g_N : \partial \Omega \times (0, T) \rightarrow \IR$, we consider the following problem
on a space--time cylinder~$\QT = \Omega \times (0, T]$:
\begin{equation}
\label{EQN::POROUS-MEDIUM}
\begin{cases}
\dpt \rho - \partial_{xx} \rho^m = 0 & \text{ in } \QT, \\
\partial_x (\rho^m) \vnOmega = g_N & \text{ on } \partial \Omega \times (0, T), \\
\rho = \rho_0 & \text{ on } \Omega \times \{0\},
\end{cases}
\end{equation}
where the first equation can be written as~\eqref{EQN::MODEL-PROBLEM-1} with~$N = 1$, $A(\rho) = m \rho^{m - 1}$, and~$f(\rho) = 0$.

We set~$\calD = (0, 1)$ and define the entropy density~$s : \overline{\calD} \rightarrow (0, \infty)$ as follows:
\begin{equation*}
s(\rho) := \rho \log(\rho) + (1 - \rho)\log(1 - \rho) + \log(2),
\end{equation*}
whence, $s'(\rho) = \log\big(\frac{\rho}{1- \rho} \big)$, $s''(\rho) = \frac{1}{\rho(1 - \rho)}$, and~$u(w) = \frac{e^w}{1 + e^w}$. 
For this choice of~$s(\cdot)$, Assumptions~\ref{H1}--\ref{H2c} are satisfied with~$\gamma = m$ and~$C_f = 0$, provided that~$m \in (1, 2]$; see~\cite[Prop.~4.2]{Braukhoff_Perugia_Stocker_2022}.

\paragraph{$h$-convergence.} In order to appraise the accuracy of the proposed method, we consider problem~\eqref{EQN::POROUS-MEDIUM} with~$\Omega = (0, 1)$ and~$m = 2$, and choose the initial datum~$\rho_0$ and the Neumann boundary datum~$g_N$ so that the exact solution is given by
\begin{equation}
\label{EQN::EXACT-SOL-POROUS}
\rho(x, t) = \Bigg[\frac{(m - 1)(x - \alpha)^2}{2m(m +1)(\beta - t)} \Bigg]^{\frac{1}{m - 1}},
\end{equation}
with~$\alpha = 2$ and~$\beta = 5$; cf. \cite[\S4.2]{Braukhoff_Perugia_Stocker_2022}.

We choose the parameters of the nonlinear solver as~$tol = 10^{-12}$ and~$s_{\max} = 50$.
We consider a set of meshes with uniformly distributed points for the spatial domain~$\Omega$, and choose~$\tau = \mathcal{O}(h^{p+1})$ to balance the expected convergence rates in space with the first-order accuracy of the backward Euler time stepping scheme. Moreover, we set the regularization parameter to~$\varepsilon = 0$.

In Figure~\ref{FIG::POROUS-MEDIUM-CONVERGENCE}, we show (in \emph{log-log} scale) the following errors obtained at the final time~$T = 1$:
\begin{equation}
\label{EQN::ERRORS-1D}
\Norm{\rho - u(w_h)}_{L^2(\Omega)} \quad \text{ and } \quad \Norm{\partial_x \rho + \sigma_h}_{L^2(\Omega)}.
\end{equation}
We observe, as expected, 
convergence rates of order~$\mathcal{O}(h^{p+1})$ and~$\mathcal{O}(h^{p})$, respectively.

\begin{figure}[ht]
    \centering
    \includegraphics[width = 0.4\textwidth]{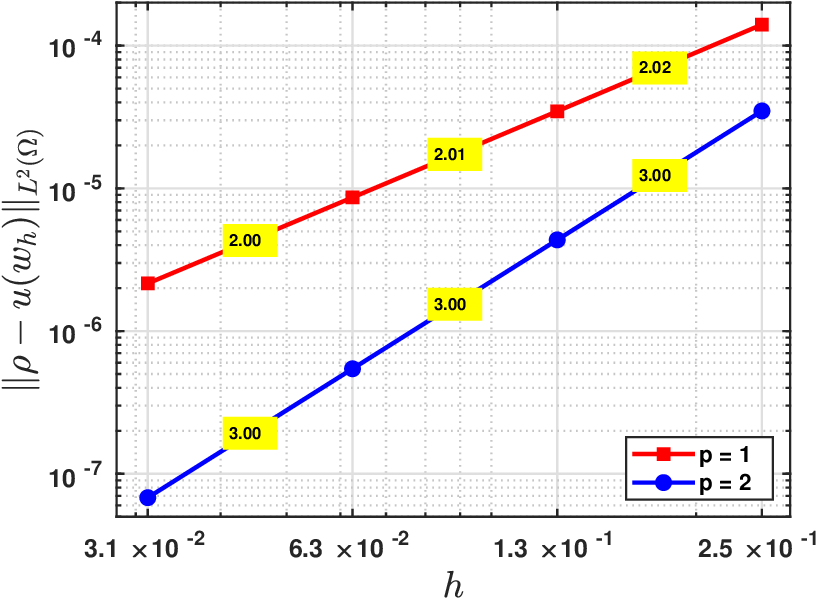}
    \hspace{0.2in}
    \includegraphics[width = 0.4\textwidth]{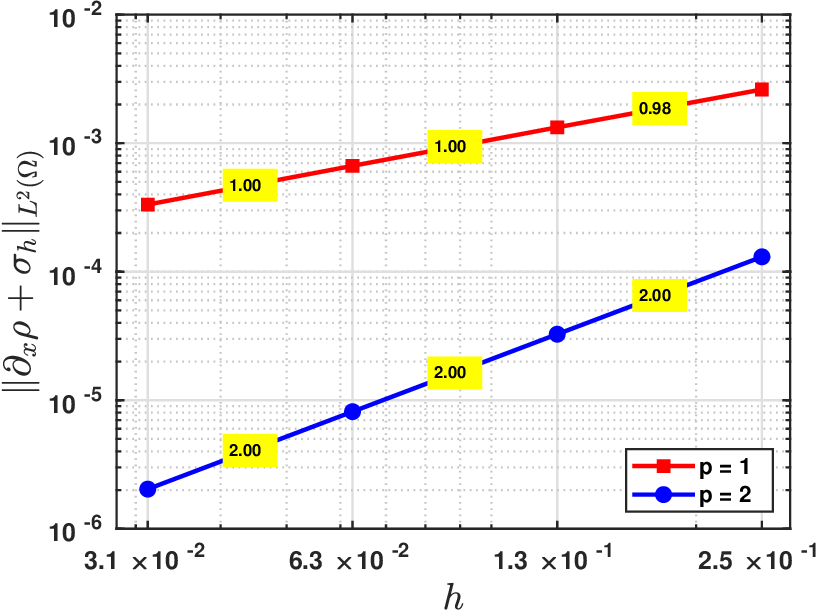}
    \caption{$h$-convergence of the errors in~\eqref{EQN::ERRORS-1D} at the final time~$T = 1$ for the porous medium equation with exact solution~$\rho$ in~\eqref{EQN::EXACT-SOL-POROUS}. The numbers in the
yellow rectangles denote the experimental rates of convergence. }
    \label{FIG::POROUS-MEDIUM-CONVERGENCE}
\end{figure}

\paragraph{Entropy stability.} We now consider 
problem~\eqref{EQN::POROUS-MEDIUM} with~$\Omega = (-\pi/4, 5\pi/4)$, $m = 2$, homogeneous Neumann boundary conditions, and 
initial datum given by
\begin{equation}
\label{EQN::INITIAL-CONDITION-WAITING-TIME}
\rho_0(x) = \begin{cases}
\sin^{2/(m - 1)}(x) & \text{ if } 0 \leq x \leq \pi, \\
0 & \text{ otherwise,}
\end{cases}
\end{equation}
whose exact solution keeps the support~$[0, \pi]$ of the initial condition until the waiting time~$t^* = (m-1)/(2m(m+1))$; see~\cite{Aronson_1970}.

We choose the parameters for the nonlinear solver as~$tol = 10^{-6}$ and~$s_{\max} = 100$, and consider~$T = 0.2$ as the final time. 
Moreover, we set the regularization parameter as~$\varepsilon = 10^{-6}$ and the bilinear form~$c_h(\cdot, \cdot)$ as in~\eqref{EQ::DGPROD}.
In Figure~\ref{FIG::POROUS-MEDIUM-FINITE-PROPAGATION}(first panel), we show the discrete approximation obtained for~$p = 5$, a spatial mesh with uniformly distributed points and mesh size~$h \approx 0.04$, and a fixed time step~$\tau = 10^{-3}$. 
To represent the discrete solution, we have used linear interpolation in time, which preserves the uniform boundedness of the discrete approximation.
In Figure~\ref{FIG::POROUS-MEDIUM-FINITE-PROPAGATION}(second panel), we show the value of the discrete approximation at~$x = 0$, where the expected behavior until~$t = t^*$ is observed; cf. \cite[\S4.2]{Braukhoff_Perugia_Stocker_2022}. Since~$C_f = 0$, we expect a (not necessarily monotonous) decreasing behavior of the discrete entropy values~$\{\mathcal{E}_n\}_{n = 0}^{N_t}$, where
\begin{equation}
\label{EQN::ENTROPY-FUNCTIONAL}
\mathcal{E}_0 := \int_{\Omega} s(\rho_0) \mathrm{d}x \quad \text{ and } \quad  \mathcal{E}_n := \int_{\Omega} s(u(w_h^{\varepsilon, n + 1}(x))) \mathrm{d}x \quad \text{ for } n = 1, \ldots, N_t.
\end{equation}
Such an expected behaviour is numerically observed in Figure~\ref{FIG::POROUS-MEDIUM-FINITE-PROPAGATION}(third panel).

Moreover, we define the discrete mass values~$\{\mathcal{M}_n\}_{n = 0}^{N_t}$ as
\begin{equation}
\label{EQN::MASS-FUNCTIONAL}
\mathcal{M}_0 := \int_{\Omega} \rho_0 \mathrm{d}x \quad \text{ and } \quad 
\mathcal{M}_n := \int_{\Omega} u(w^{\varepsilon, n}) \mathrm{d}x \quad \text{ for } n = 1, \ldots, N_t.
\end{equation}
Since~$f(\rho) = 0$, mass is conserved for analytical solutions. Standard arguments can be used to show that, for any solution~$\{w^{\varepsilon, n + 1}\}_{n = 0}^{N_t - 1}$ to the fully discrete scheme~\eqref{EQN::FULLY-DISCRETE-SCHEME-INIT}--\eqref{EQN::FULLY-DISCRETE-SCHEME}, for~$n = 0, \ldots, N_t - 1$, 
\begin{align*}
|\mathcal{M}_{n+1} - \mathcal{M}_0|  & \leq  \varepsilon \sum_{m = 1}^n \tau_{m+1} \int_\Omega |w^{\varepsilon, m + 1}| \mathrm{d}x  \leq \varepsilon |\Omega|^{\frac12} \sum_{m = 1}^{N_t - 1} \tau_{m+1} \Norm{w^{\varepsilon, m+1}}_{L^2(\Omega)} \\
& \leq \sqrt{\varepsilon} |\Omega|^{\frac12} \bigg(\sum_{m = 1}^{N_t - 1} \tau_{m+1}\bigg)^{\frac12} \bigg( \sum_{m = 1}^{N_t - 1} \varepsilon \tau_{m+1} \Norm{w^{\varepsilon, m + 1}}_{L^2(\Omega)}^2 \bigg)^{\frac12}  \leq \sqrt{\varepsilon} |\QT|^{\frac12} \bigg(\int_{\Omega} s(\rho_0) \dx\bigg)^{\frac12}.
\end{align*}
In Figure~\ref{FIG::POROUS-MEDIUM-FINITE-PROPAGATION}(fourth panel), we show (in \emph{semilogy} scale) the error evolution of the mass  values for different
regularization parameters~$\varepsilon$, where a mass loss of order~$\mathcal{O}(\varepsilon)$ is numerically observed.

\begin{figure}[ht]
    \centering
\includegraphics[width = 0.43\textwidth]{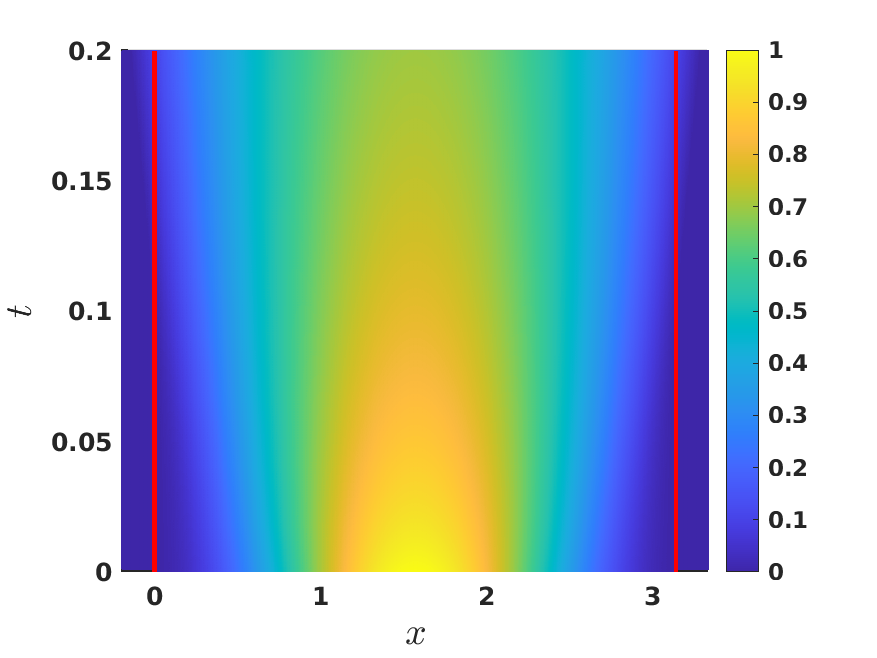}
\hspace{0.2in}
\includegraphics[width = 0.4\textwidth]{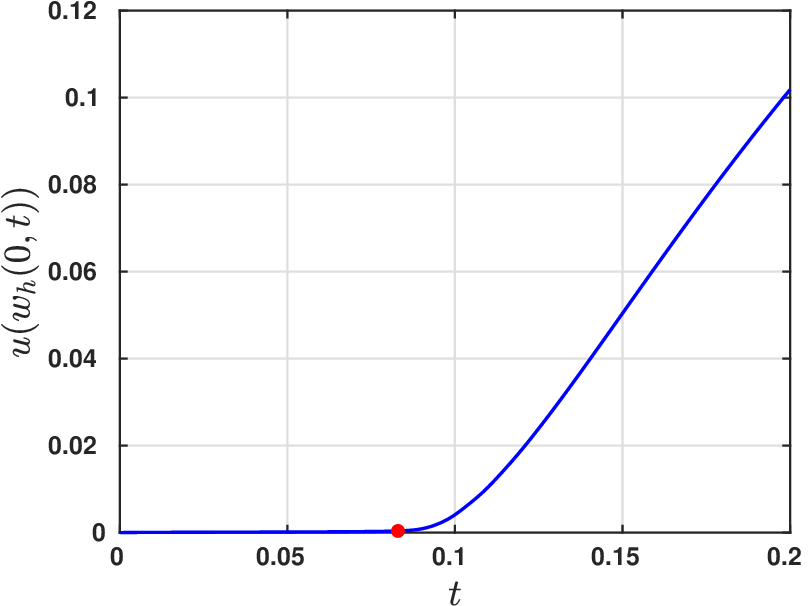}\\[1em]
\includegraphics[width = 0.4\textwidth]{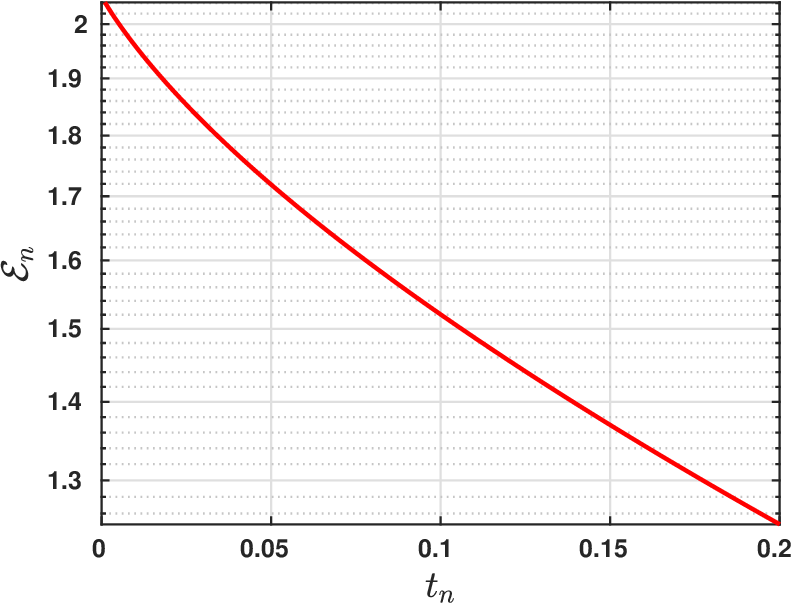}
\hspace{0.3in}
\includegraphics[width = 0.4\textwidth]{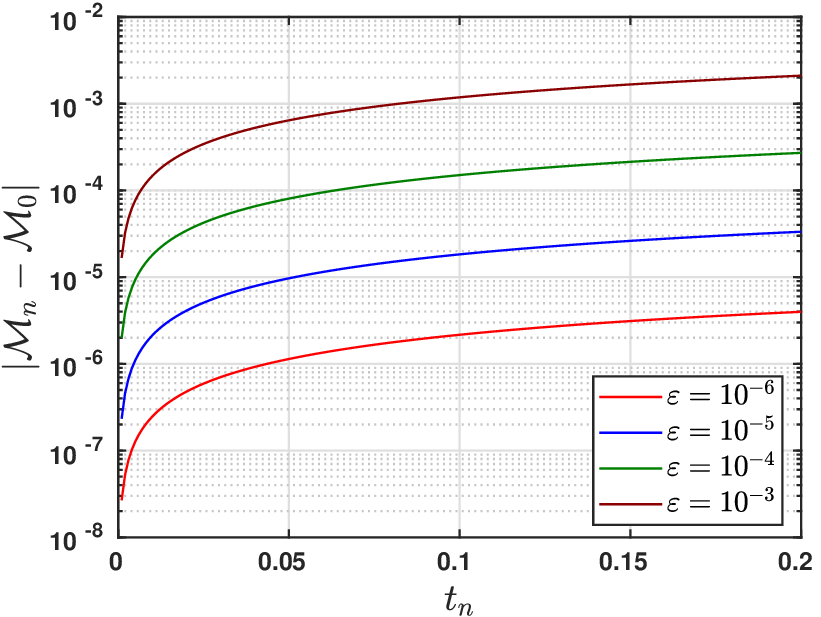}
\caption{Results obtained for the porous medium equation with initial condition~\eqref{EQN::INITIAL-CONDITION-WAITING-TIME}. \textbf{First panel:} discrete approximation~$u(w_h^{\varepsilon}(x, t))$ and support of the initial condition~(\textcolor{red}{red} lines). 
\textbf{Second panel:} evolution of the value of the discrete approximation at the extreme~$x = 0$. The theoretical waiting time has been highlighted with a \textcolor{red}{red}  dot. 
\textbf{Third panel:} evolution of the entropy values in~\eqref{EQN::ENTROPY-FUNCTIONAL} in \emph{semilogy} scale.
\textbf{Fourth panel:} Error evolution of the mass values in~\eqref{EQN::MASS-FUNCTIONAL} in \emph{semilogy} scale for different values of the regularization parameter~$\varepsilon$.}
\label{FIG::POROUS-MEDIUM-FINITE-PROPAGATION}
\end{figure}
\paragraph{Situation where the regularizing term is not necessary.} We consider problem~\eqref{EQN::POROUS-MEDIUM} with~$\Omega = (0, 1)$ and~$m = 2$, and choose the initial datum~$\rho_0$ and the Neumann boundary datum~$g_N$ so that the exact solution is given by \eqref{EQN::EXACT-SOL-POROUS} with~$\alpha = 2$ and~$\beta = 5$. We choose the parameters of the nonlinear solver as~$tol = 10^{-10}$ and~$s_{\max} = 50$.
We consider a set of meshes with uniformly distributed points for the spatial domain~$\Omega$. In Figure~\ref{FIG::GOOD-SOL}, we show some results for this problem, where the regularizing term with parameter~$\varepsilon$ {\em is not needed}. We focus on the behavior of the Newton method for the first time step. In Figure~\ref{FIG::GOOD-SOL},
\begin{itemize}
\setlength{\itemsep}{-1.5pt}
 \item First panel: We plot~$\rho$ in~\eqref{EQN::EXACT-SOL-POROUS}, and observe that it does not take values close to~$0$ or~$1$.
 \item Second panel: We show the condition number of the Jacobian matrix in each linear iteration~$s$.
 \item Third panel: We present the evolution of the~$\ell_{\infty}$ norm of the vector solution~$W^{s+1}$ at the~$s$th linear iteration.
 \item Fourth panel: We show the evolution of the stopping criterion.
\end{itemize}
Clearly, the behavior of the Newton method is similar for all~$\varepsilon$. In fact, in this experiment, we can set~$\varepsilon = 0$.

\begin{figure}[ht]
\centering
\includegraphics[width = 0.45\textwidth]{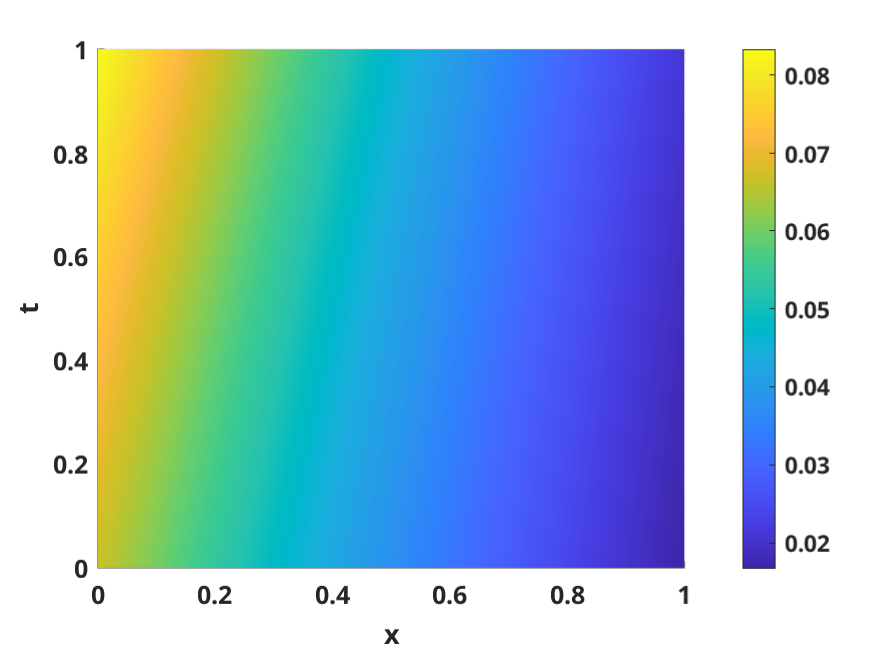}
\hspace{0.1in}
\includegraphics[width = 0.4\textwidth]{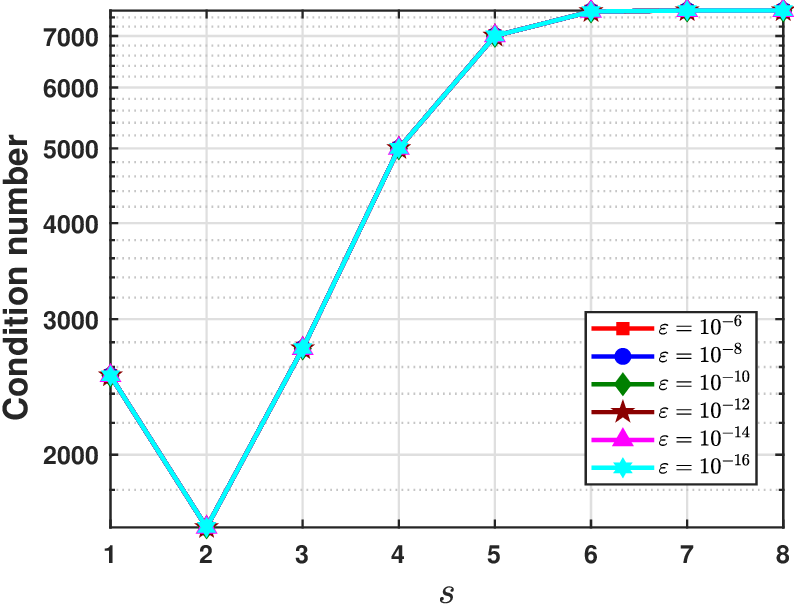} \\
\includegraphics[width = 0.4\textwidth]{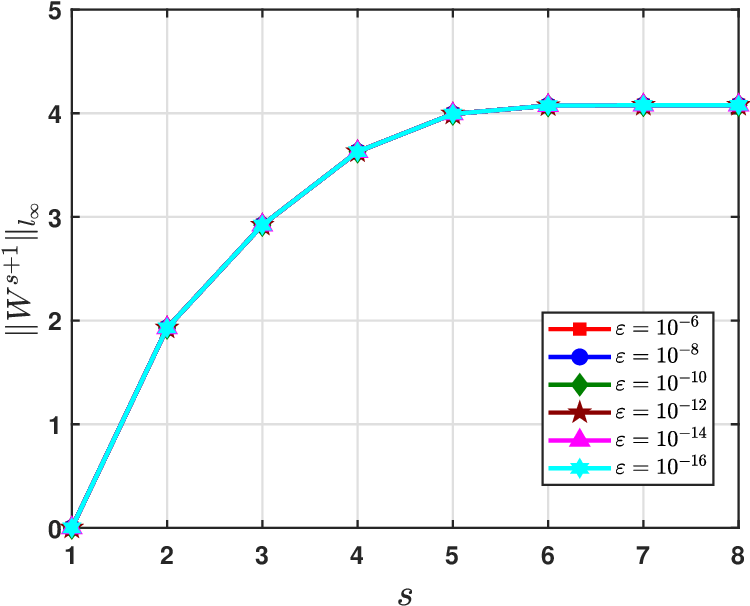}
\hspace{0.1in}
\includegraphics[width = 0.4\textwidth]{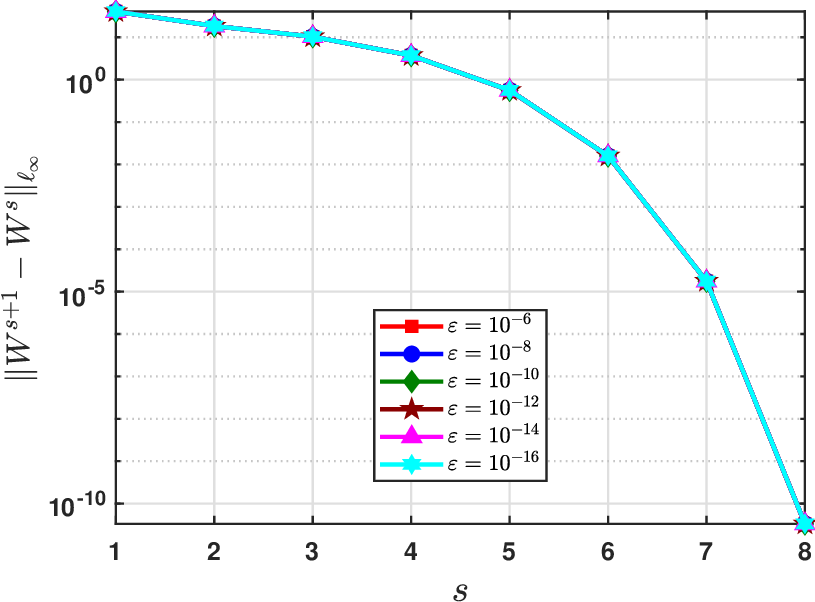}
\caption{Numerical results for the porous medium equation~\eqref{EQN::POROUS-MEDIUM} with exact solution~\eqref{EQN::EXACT-SOL-POROUS} as a function of the linear iteration number $s$}.
\label{FIG::GOOD-SOL}
\end{figure}

\paragraph{Situation where the regularizing term is necessary.} We now consider problem~\eqref{EQN::POROUS-MEDIUM} with~$\Omega = (-\pi/4$, $5\pi/4)$, $m = 2$, homogeneous Neumann boundary conditions, and 
initial datum given by \eqref{EQN::INITIAL-CONDITION-WAITING-TIME}. We choose the parameters for the nonlinear solver as~$tol = 10^{-12}$ and~$s_{\max} = 50$, and consider~$T = 0.2$ as the final time. In Figure~\ref{FIG::BAD-SOL}, we show some results for this problem, where the regularizing term with parameter~$\varepsilon$ {\em is needed}. Again, we focus on the behavior of the Newton method for the first time step. In Figure~\ref{FIG::BAD-SOL},
\begin{itemize}
\setlength{\itemsep}{-1.5pt}
 \item First panel: We plot~$u(w_h)$ for~$\varepsilon = 10^{-6}$, and observe that it takes values close to~$0$ and~$1$, especially at the beginning. This suggests that the Newton method may already encounter issues in the first time step.
 \item Second panel: We show the condition number of the Jacobian matrix in each linear iteration~$s$. The condition numbers grow less for larger values of~$\varepsilon$, and explode when~$\varepsilon$ gets closer to~$0$.
 This is in line with our intuition, as the matrix~$s''(\cdot)$ is singular at~$0$ and~$1$ (see also Remark~\ref{rem:regularizing}).
 \item Third panel: We present the evolution of the~$\ell_{\infty}$ norm of the vector solution~$W^{s+1}$ at the~$s$th linear iteration. As expected from the theory, larger values of~$\varepsilon$ enfoce a  stronger bound on~$w_h$ in the~$L^{\infty}(\Omega)$ norm.
 \item Fourth panel: We show the evolution of the stopping criterion. Clearly, the number of linear iterations necessary to reach the desired tolerance increases when~$\varepsilon$ decreases.
\end{itemize}

\begin{figure}[!ht]
\centering
\includegraphics[width = 0.45\textwidth]{exp2-plot.eps}
\hspace{0.1in}
\includegraphics[width = 0.4\textwidth]{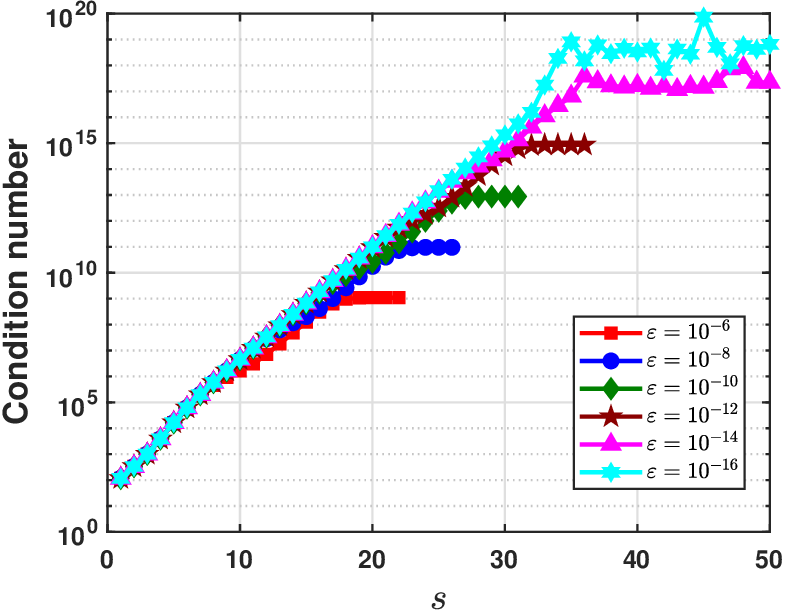} \\
\includegraphics[width = 0.4\textwidth]{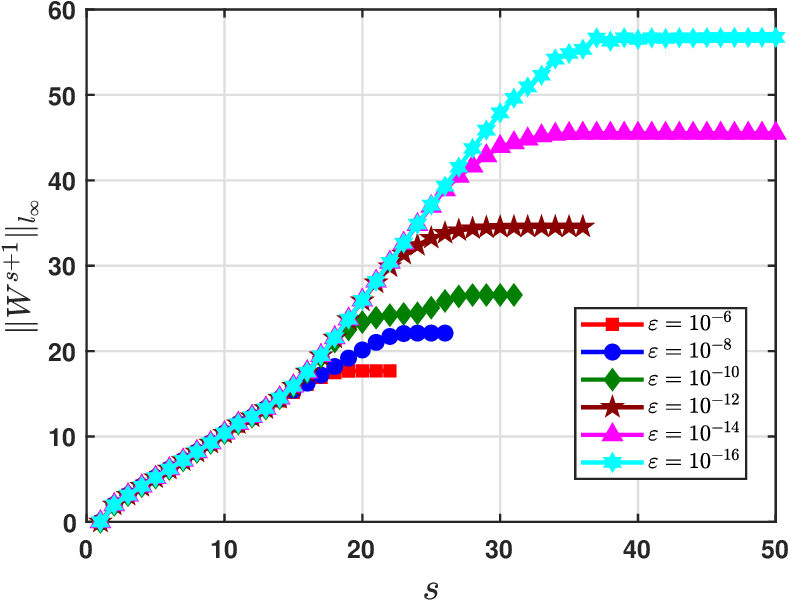}
\hspace{0.1in}
\includegraphics[width = 0.4\textwidth]{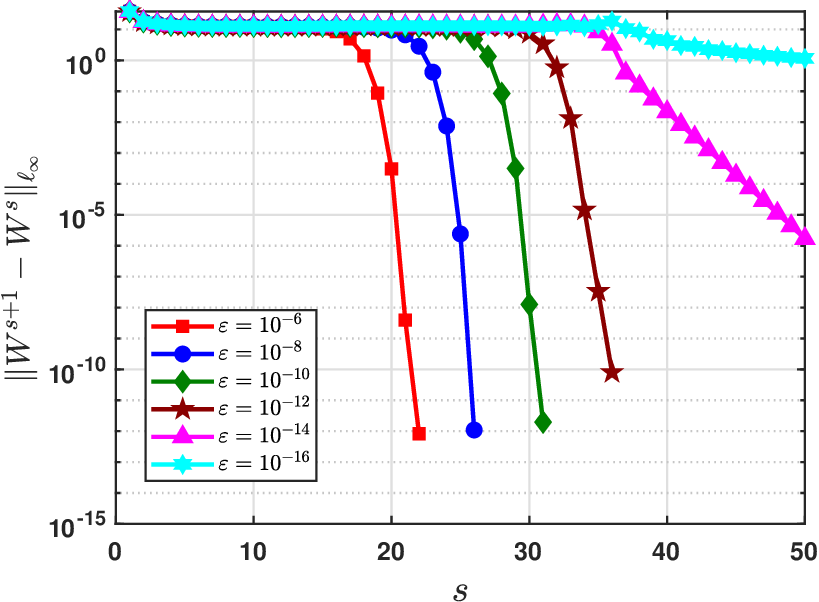}
\caption{Numerical results for the porous medium equation~\eqref{EQN::POROUS-MEDIUM} with initial condition~\eqref{EQN::INITIAL-CONDITION-WAITING-TIME} as a function of the linear iteration number $s$}.
\label{FIG::BAD-SOL}
\end{figure}

\subsection{Two-dimensional SKT model}
We consider the two-dimensional Shigesada-Kawasaki-Teramoto (SKT) population system~\cite{SKT_1979} with~$N = 2$ species. The diffusion matrix and Lotka--Volterra reaction terms (see \eqref{EQN::MODEL-PROBLEM}) read as
\begin{subequations}
\begin{align}
\label{EQN::DIFFUSION-SKT}
  A_{ij}(\brho) & = \delta_{ij} \bigg(a_{i0} + \sum_{k = 1}^2 a_{ik} \rho_k \bigg) + a_{ij} \rho_i,  \quad i,j = 1, 2, \\
  \label{EQN::REACTION-SKT}
  \vf_i(\brho) & = \rho_i 
  \bigg(b_{i0} - \sum_{j = 1}^2 b_{ij} \rho_j\bigg),  \quad \qquad \qquad  i = 1, 2,
\end{align}
\end{subequations}
for some coefficients~$\{a_{ij}\}$ and~$\{b_{ij}\}$ satisfying $a_{ii} > 0$, $b_{ii} \geq 0$ for~$i = 1, 2$, and~$a_{ij} \geq 0$, $b_{ij} \geq 0$ for~$i \neq j$.

We set~$\calD = (0, \infty)^2$ and define the entropy density~$s : (0, \infty)^2 \rightarrow (0, \infty)$ as  (see~\cite[Eq.~(6)]{Jungel_Zurek_2021})
\begin{equation}
\label{EQN::ENTROPY-SKT}
s(\brho) := \sum_{i = 1}^2 \pi_i (\rho_i(\log (\rho_i) - 1) + 1),
\end{equation}
where $\pi_1=a_{21}$ and $\pi_2=a_{12}$. Then~$s'(\brho) = (\pi_1 \log \rho_1, \pi_2 \log \rho_2)$, $s''(\brho) = \text{diag}(\pi_i/\rho_i)$, and~$\bu(\bw) = (\exp(w_1/\pi_1),$ $\exp(w_2/\pi_2))$.

Assumption~\ref{H2a} is satisfied with~$\gamma = \min_{i = 1, 2} \pi_i a_{ii} > 0$; see~\cite[\S3.1]{Jungel_Zurek_2021}. 
Moreover, if the coefficients~$\{b_{ij}\}$ are all equal to zero, then Assumption~\ref{H2b} is trivially satisfied. For general coefficients~$\{b_{ij}\}$, the reaction term satisfies the bound
\begin{equation*}
\vf(\brho) \cdot s'(\brho) \leq C_f(1 + s(\brho)) \quad \forall \brho \in \calD, \quad \text{ with } C_f = \frac{2}{\log (2)} \max_{i = 1, 2} \bigg(b_{i0} + \frac{1}{e \pi_i} \sum_{j = 1}^2 \pi_j b_{ji} \bigg),
\end{equation*}
which substitutes Assumption~\ref{H2b} in our theoretical results, by requiring that~$\tau < 1/C_f$. Notice that the domain $\calD=(0,\infty)^2$ is not bounded, as required in Assumption~\ref{H1}. As a consequence, we are not able to prove upper bounds for $\rho_i$ but only the nonnegativity of $\rho_i$; see \cite{Jungel_Zurek_2021}.

\paragraph{$h$-convergence.} We consider the SKT system with~$\Omega = (0, 1)^2$, vanishing Lotka--Volterra terms, and the diffusion parameters (cf.\ \cite[Example~5.1]{Sun_Carrillo_Shu_2019})
\begin{equation*}
a_{i0} = 0 \ \text{ for } i = 1, 2, \quad a_{ij} = 1 \ \text{ for } i,j = 1, 2.
\end{equation*}
We choose the initial datum~$\brho_0$ and add a source term so that the exact solution is given by
\begin{equation}
\label{EQN::EXACT-SOL-SKT}
    \rho_1(x, y, t) = 0.25 \cos(2\pi x) \cos(\pi y) \exp(-t) + 0.5, \quad \rho_2(x, y, t) = 0.25 \cos(\pi x) \cos(2 \pi y) \exp(-t) + 0.5.
\end{equation}

We choose the parameters of the nonlinear solver as~$tol = 10^{-6}$ and~$s_{\max} = 50$. We consider a set of structured simplicial meshes for the spatial domain~$\Omega$, choose a fixed time step~$\tau = \mathcal{O}(h^{p+1})$ as in Section~\ref{SUBSECT::POROUS-MEDIUM}, and set the regularization parameter equal to~$\varepsilon = 0$.

In Figure~\ref{FIG::CONVERGENCE-SKT}, we show (in \emph{log-log} scale) the following errors obtained at the final time~$T = 0.5$:
\begin{equation}
\label{EQN::ERRORS-2D}
\Norm{\rho_1 - u_1(w_{1, h})}_{L^2(\Omega)} \quad \text{ and } \quad \Norm{\nabla \rho_1 + \bunderline{\sigma}_{1, h}}_{L^2(\Omega)^2}, 
\end{equation}
where convergence rates of order~$\mathcal{O}(h^{p+1})$ and~$\mathcal{O}(h^p)$ are observed, respectively. Similar results were obtained for the approximation of~$\rho_2$, so they are omitted.
\begin{figure}[ht]
    \centering
    \includegraphics[width = 0.4\textwidth]{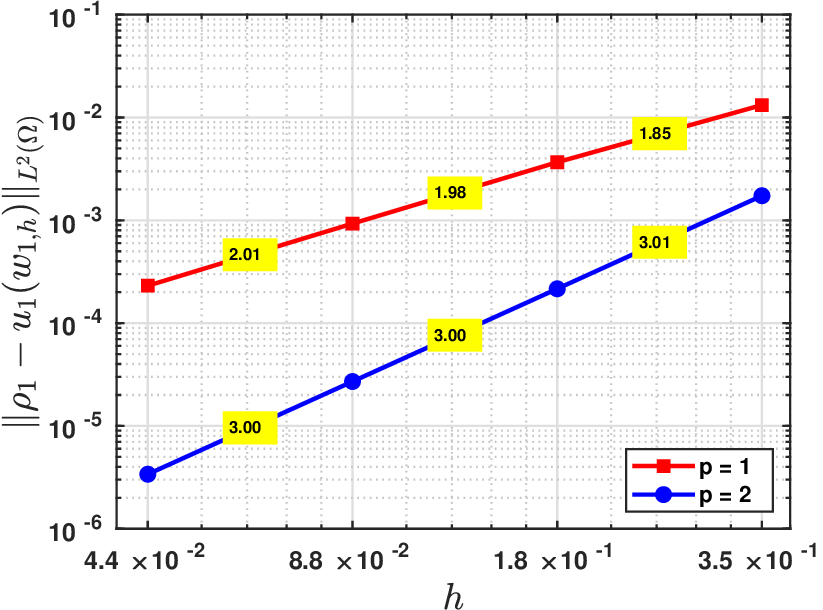}
    \hspace{0.2in}
    \includegraphics[width = 0.4\textwidth]{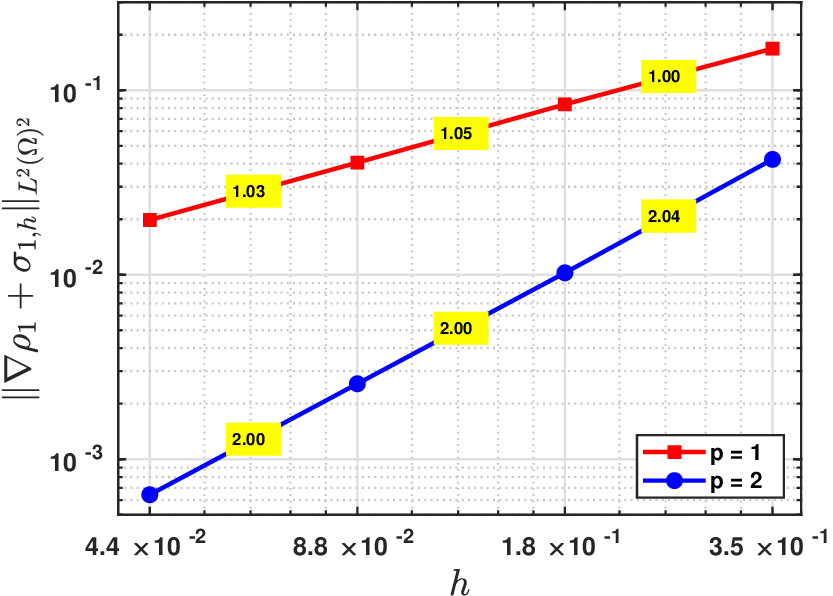}
    \caption{$h$-convergence of the errors in~\eqref{EQN::ERRORS-2D} at the final time~$T = 0.5$ for the SKT system with exact solution~$\brho$ in~\eqref{EQN::EXACT-SOL-SKT}.}
    \label{FIG::CONVERGENCE-SKT}
\end{figure}

\paragraph{Turing pattern.} We now consider a test from~\cite[\S7.3]{Jungel_Zurek_2021}. More precisely, we choose~$\Omega = (0, 1)^2$, and the coefficients for the diffusion matrix in~\eqref{EQN::DIFFUSION-SKT} and the reaction term in~\eqref{EQN::REACTION-SKT} as follows:
\begin{equation}
\label{EQN::SKT-PARAMETERS}
\begin{tabular}{llllll}
$a_{10} = 0.05$, & $a_{11} = 2.5 \times 10^{-5}$, & $a_{12} = 1.025$,  & 
$a_{20} = 0.05$, & $a_{21} = 0.075$, & $a_{22} = 2.5 \times 10^{-5}$, \\
$b_{10} = 59.7$, & $b_{11} = 24.875$, & $b_{12} = 19.9$, & 
$b_{20} = 49.75$, & $b_{21} = 19.9$, & $b_{22} = 19.9$.
\end{tabular}
\end{equation}
The initial datum is chosen as a perturbation of the equilibrium~$\brho^* = (2, 0.5)$:
\begin{equation}
\label{EQN::INITIAL-CONDITION-SKT}
\rho_1(x, y, 0) = 2 + 0.31 g(x - 0.25, y - 0.25) + 0.31 g(x - 0.75, y - 0.75), \quad \rho_2(x, y, 0) = 0.5,
\end{equation}
where~$g(x, y) = \max\{1 - 8^2 x^2 - 8y^2, 0\}$.

We select the parameters of the nonlinear solver as~$tol = 10^{-6}$ and~$s_{\max} = 50$. 
We consider a rather coarse mesh with~$h \approx 1.41 \times 10^{-1}$ and use high-order approximations of degree~$p = 3$. 
As for the time step, we use the adaptive strategy proposed in~\cite[\S7.1]{Jungel_Zurek_2021}, i.e., at the $n$th time step, if the desired tolerance has not been reached after 50 iterations, the time step~$\tau_{n+1}$ is reduced by a factor of~$0.2$ and the nonlinear solver is restarted, whereas, at the beginning of each time step, we increase the previous
one by a factor of~$1.1$. The initial time step is set as~$\tau_1 = 10^{-4}$. 
As in the previous experiment, we set the regularization parameter as~$\varepsilon=0$.

As discussed in~\cite[\S7.3]{Jungel_Zurek_2021}, due to the cross-diffusion, the equilibrium~$\brho^*$ is unstable for the SKT system (see~\cite[Thm.~3.1]{Tian_etal_2010}), and the choice of the parameters~$\{b_{ij}\}$ leads to the coexistence of the two species (see~\cite[\S6.2]{Shigesada_Kawasaki_1997}). 
In Figure~\ref{FIG::TURING-PATTERN}, we show the evolution of the approximations obtained for the densities~$\rho_1$ and~$\rho_2$ at times~$t = 0.5$ and~$t = 10$, which exhibits the same Turing pattern formation obtained in~\cite[Fig.~1]{Jungel_Zurek_2021}.
\begin{figure}[ht]
\centering
\includegraphics[width = 0.45\textwidth]{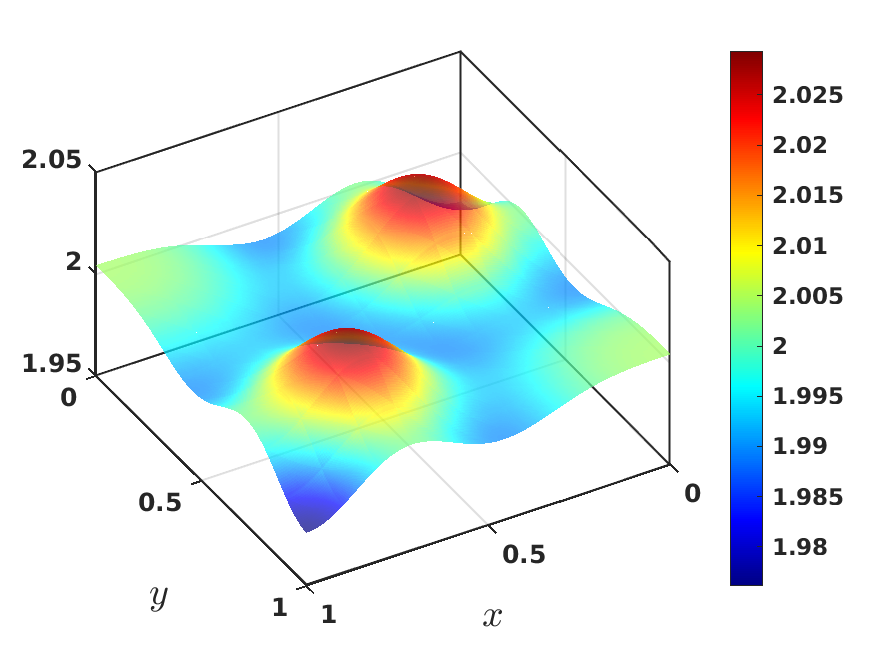}
\hspace{0.2in}
\includegraphics[width = 0.45\textwidth]{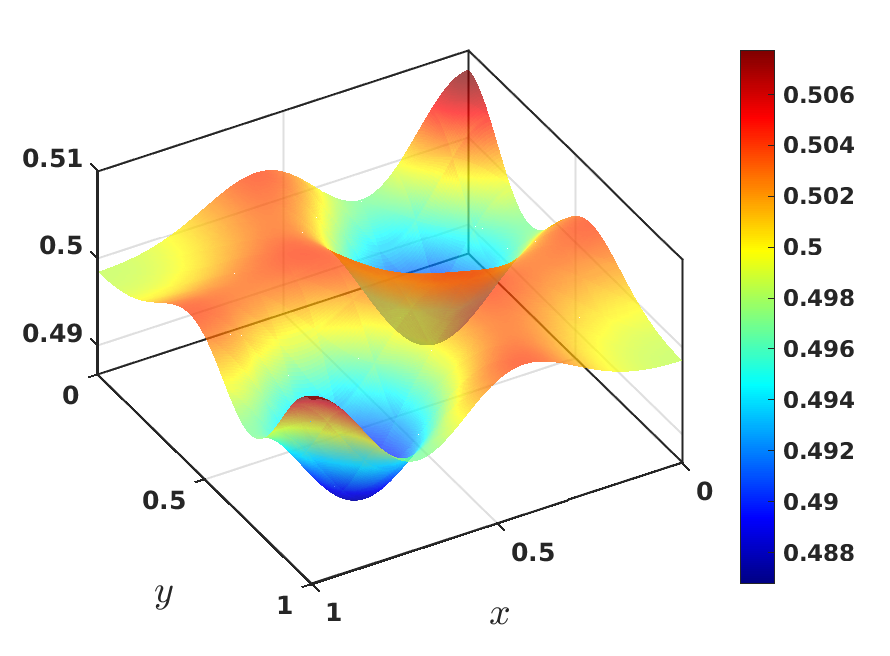}\\
\includegraphics[width = 0.45\textwidth]{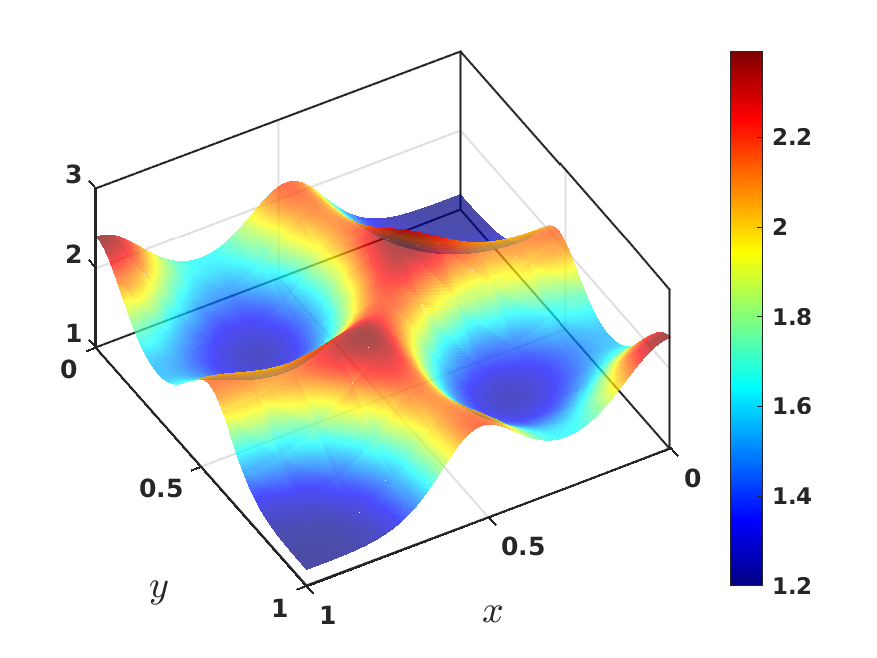}
\hspace{0.2in}
\includegraphics[width = 0.45\textwidth]{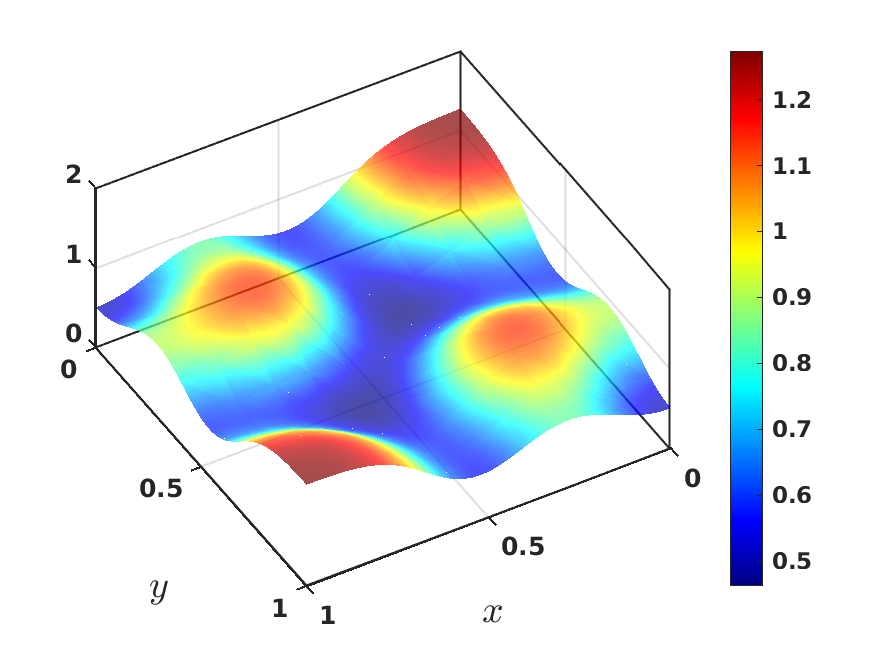}
\caption{Evolution of the approximations of the densities~$\rho_1$ (left panels) and~$\rho_2$ (right panels) for the SKT system with parameters~\eqref{EQN::SKT-PARAMETERS} and initial datum~\eqref{EQN::INITIAL-CONDITION-SKT} at times~$t = 0.5$ (first row) and~$t = 10$ (second row).}
\label{FIG::TURING-PATTERN}
\end{figure}
\section{Conclusions}\label{SECT::CONCLUSIONS}
We designed and analyzed a structure-preserving backward Euler-LDG scheme for nonlinear cross-diffusion systems, which provides approximate solutions that respect the entropy structure of the system, and the positivity or boundedness of the physical unknown in a strong (pointwise) sense.
The existence of discrete solutions and the asymptotic convergence to continuous weak solutions have been proven under some assumptions on the regularizing term and the discrete spaces, whose validity for different cases is verified.
Moreover, high-order convergence rates are numerically observed for some~$L^2(\Omega)$ errors at the final time.


\begin{appendix}
\section{Appendix}\label{app}

We present two examples of cross-diffusion systems satisfying Assumption~\ref{H2a} with~$s'' A\in \EFC{0}{\overline{\calD}}{\IR^{N\times N}}$. 

\subsection{Volume-filling diffusion model for fluid mixtures}

We define the diffusion matrix $A(\brho)=(A_{ij}(\brho))_{i,j=1}^N$ with $N\in\IN$ by 
\begin{align*}
  A_{ii}(\brho) = p_i\rho_i(1-\rho_i), \quad
  A_{ji}(\brho) = -p_i\rho_i\rho_j\quad\mbox{for }i,j=1,\ldots,N,
\end{align*}
where $p_i>0$ are pressure coefficients, and the entropy functional~$s(\brho)$ is defined by
\begin{align*}
  s(\brho) = \sum_{i=1}^N\rho_i(\log\rho_i-1) + \rho_0(\log\rho_0-1) 
  + N + 1,
  \quad\mbox{where }\rho_0 := 1-\sum_{i=1}^N\rho_i,
\end{align*}
and $\brho\in\calD:=\{\brho\in(0,1)^N:\sum_{i=1}^N\rho_i<1\}$. The cross-diffusion system with this diffusion matrix can be derived from a multi-phase viscous fluid model in the diffusion limit similarly as in \cite[\S4.2]{Jungel_2016}, assuming that the partial pressures of the mixture are linear. The fluid mixture consists of $N+1$ components with the volume fractions $\rho_0,\ldots,\rho_N$, which sum up to one.

The entropy $s:\calD\to(0,\infty)$ is convex and $s'$ is invertible on $\calD$. The Hessian $s''(\brho)=(H_{ij})_{i,j=1}^N$ of the entropy has the entries $H_{ij}=\delta_{ij}/\rho_i + 1/\rho_0$ and therefore, for $i\neq j$,
\begin{align*}
  (A(\brho)^{{\sf T}}s''(\brho))_{ii}
  &= A_{ii}(\brho)H_{ii} + \sum_{k\neq i} 
  A_{ki} (\brho)H_{ki}
  = p_i\rho_i(1-\rho_i)\bigg(\frac{1}{\rho_i}+\frac{1}{\rho_0}\bigg)
  - p_i\rho_i\sum_{k\neq i}\frac{\rho_k}{\rho_0} = p_i, \\
  (A(\brho)^{{\sf T}}s''(\brho))_{ij}
  &= \sum_{k=1}^N 
  A_{ki}(\brho)\frac{\delta_{jk}}{\rho_j}
  + \sum_{k=1}^N A_{ki}
  (\brho)\frac{1}{\rho_0}
  = -p_i\rho_i + \frac{p_i}{\rho_0}\rho_i\bigg(1-\rho_i
  - \sum_{k\neq i}\rho_k\bigg) = 0.
\end{align*}
Thus, $s'' A\in \EFC{0}{\overline{\calD}}{\IR^{N\times N}}$ holds and Assumption~\ref{H2a} is satisfied with $\gamma=\min\{p_1,\ldots,p_N\}>0$. 

\subsection{Tumor-growth model}

The growth of an avascular tumor can be described by a cross-diffusion system with the diffusion matrix
\begin{align*}
  A(\brho) = \begin{pmatrix}
  2\rho_1(1-\rho_1) - \beta\theta \rho_1\rho_2^2 
  & -2\beta \rho_1\rho_2(1+\theta\rho_1) \\
  -2\rho_1\rho_2 + \beta\theta(1-\rho_2)\rho_2^2 
  & 2\beta \rho_2(1-\rho_2)(1+\theta \rho_1)
  \end{pmatrix},
\end{align*}
where $\rho_0$, $\rho_1$, and $\rho_2$ denote the volume fractions of the interstitial fluid (water, nutrients), tumor cells, and extracellular matrix, respectively. The parameters $\beta>0$ and $\theta>0$ appear in the partial pressures for the extracellular fluid and tumor cells, respectively. We refer to \cite[\S4.2]{Jungel_2016} for details about the modeling. We choose the same entropy and domain $\calD$ as in the previous subsection. 
A straightforward computation shows that
\begin{align*}
  A(\brho)^{{\sf T}}s''(\brho) = \begin{pmatrix}
	2 & \beta\theta \rho_2 \\ 0 & 2\beta(\theta \rho_1+1)
	\end{pmatrix}
\end{align*}
is positive definite if~$\theta<4/\sqrt{\beta}$. More precisely, $s'' A\in \EFC{0}{\overline{\calD}}{\IR^{N\times N}}$ and there exists $\gamma>0$ such that Assumption~\ref{H2a} is satisfied. The constant $\gamma$ vanishes if $\theta=4/\sqrt{\beta}$, so the strict inequality is needed.
\end{appendix}


\section*{Funding}
The first author is member of the Gruppo Nazionale Calcolo Scientifico-Istituto Nazionale di Alta Matematica (GNCS-INdAM) and acknowledges 
the kind hospitality of the Erwin Schr\"odinger International Institute for Mathematics and Physics (ESI), where part of this
research was developed, and 
support from the Italian Ministry of University and Research through the project PRIN2020 ``Advanced polyhedral discretizations of heterogeneous PDEs for multiphysics problems". This research was funded in part by the Austrian Science Fund (FWF) projects 10.55776/F65 (AJ, IP), 10.55776/P33010 (AJ), and 10.55776/P33477 (IP). This work has received funding from the European Research Council (ERC) under the European Union's Horizon 2020 research and innovation programme, ERC Advanced Grant NEUROMORPH, no.~101018153.

\end{document}